\documentclass[12pt]{amsart}
\usepackage{amsmath,amssymb,amsthm,mathrsfs}

\usepackage[backref,pagebackref]{hyperref}
\usepackage[alphabetic,lite]{amsrefs}

\hypersetup{
  hidelinks,
  pdftitle={Local cohomology with Schubert support on compact Hermitian symmetric spaces},
  pdfauthor={Michael Perlman}
}

\usepackage{longtable}
\usepackage{capt-of}

\usepackage{tikz}

\newdimen\unitsize
\renewcommand{\a}{\underline{a}}
\renewcommand{\b}{\underline{b}}
\newcommand{\mc}[1]{\mathcal{#1}}
\newcommand{\DD}{\mathbb{D}}
\newcommand{\defi}[1]{{\upshape\sffamily #1}}

\newtheorem{thmx}{Theorem}

\newtheorem{theorem}{Theorem}[section]
\newtheorem{lemma}[theorem]{Lemma}
\newtheorem{prop}[theorem]{Proposition}
\newtheorem{cor}[theorem]{Corollary}

\theoremstyle{definition}
\newtheorem{example}[theorem]{Example}
 \newtheorem{remark}[theorem]{Remark}

\numberwithin{equation}{section}

\newcommand{\g}{\mathfrak g}
\newcommand{\m}{\mathfrak m}
\newcommand{\p}{\mathfrak p}
\newcommand{\mf}{\mathfrak}

\newcommand{\W}{\mathscr W}

\author{Michael Perlman}
\address{Department of Mathematics, The University of Alabama, Tuscaloosa, AL 35401}
\email{mperlman@ua.edu}

\title[Local cohomology on Hermitian symmetric spaces]{Local cohomology with Schubert support on compact Hermitian symmetric spaces}

\subjclass[2020]{14B15, 13D45, 14M15, 14C30}

\begin{document}

\begin{abstract}
We consider local cohomology sheaves on compact Hermitian symmetric spaces supported in Schubert varieties, which carry natural structures as mixed Hodge modules. Our main result gives an explicit root-theoretic construction, uniform across Lie types, for the $\mathscr{D}$-simple composition factors and weight filtration on these modules. For Lagrangian and orthogonal Grassmannians, we translate these formulas into combinatorial rules involving strict partitions and shifted Dyck patterns, analogous to existing formulas on ordinary Grassmannians. As an application, we expand upon work of Raicu--Weyman to describe the weight filtration on local cohomology supported in symmetric determinantal and Pfaffian varieties. We also give formulas for the Hodge rational homology level and local cohomological defect of every Schubert variety in a compact Hermitian symmetric space. Finally, we tabulate the local cohomology weight filtrations for all Schubert varieties in the two exceptional Hermitian pairs.
\end{abstract}

\maketitle

\section{Introduction}

\subsection{Overview} Given a closed subvariety $Z$ of a smooth complex variety $X$,
the local cohomology sheaves $\mathscr H^q_Z(\mathscr O_X)$ are fundamental objects of study in commutative algebra and algebraic geometry. These sheaves carry natural structures as holonomic $\mathscr D_X$-modules, where $\mathscr D_X$ is the
sheaf of algebraic differential operators on $X$. In particular, they have finite length over $\mathscr D_X$, and a basic problem
is to determine their simple composition factors. Moreover, Saito's theory \cite{saito90} functorially endows each $\mathscr H^q_Z(\mathscr O_X)$ with structures as mixed Hodge modules, implying that they have two increasing filtrations of geometric origin: (1) the Hodge filtration $F_{\bullet}$, an infinite filtration by coherent $\mathscr{O}_X$-modules, and (2) the weight filtration $W_{\bullet}$, a finite filtration by holonomic $\mathscr{D}_X$-modules with semi-simple quotients. 

The Hodge and weight filtrations on $\mathscr{H}^q_Z(\mathscr{O}_X)$ have been a subject of considerable interest in recent past, in relation to singularities of $Z$ (see, for instance, \cites{MP1,MP4,CDMO,ParkPopa,DORhrh}). For local complete intersections, comparison of the Hodge and
order filtrations characterizes higher Du Bois singularities
\cite{MP4}; see also \cites{MOPW,JKSY,FL,MPkr} for higher Du Bois and higher rational singularities. Despite these developments, much less is known for varieties that are not local complete intersections, and explicit calculations of weight filtrations are limited to a few families, including secant varieties of smooth projective varieties \cite{CDORsecant}, determinantal varieties \cites{raicu2016local,iterated,MHM}, Schubert varieties in Grassmannians \cite{schubertLC}, and certain affine toric varieties \cite{KVlocal}.

In this article, we determine the simple composition factors and
weight filtration on local cohomology supported in any Schubert
variety in a compact Hermitian symmetric space. Our formula is root-theoretic and uniform across Lie types, extending the
Grassmannian formulas of \cite{schubertLC} to Lagrangian and
orthogonal Grassmannians, smooth quadrics, and the two exceptional
Hermitian symmetric spaces.

Let $G$ be a simple complex algebraic group, and let $P\subseteq G$
be a maximal parabolic subgroup with abelian unipotent radical.
Then $X=G/P$ is a compact Hermitian symmetric space. Writing
$\g=\operatorname{Lie}(G)$ and $\m$ for the Lie algebra of a Levi
factor of $P$, we call $(\g,\m)$ a \defi{Hermitian pair}.

\noindent The Hermitian pairs $(\g,\m)$ are classified by their underlying root systems:
\[
(\mathsf{A}_{n-1},\mathsf{A}_{k-1}\times \mathsf{A}_{n-k-1}),\quad
(\mathsf{B}_n,\mathsf{B}_{n-1}),\quad (\mathsf{C}_n,\mathsf{A}_{n-1}),\quad (\mathsf{D}_n,\mathsf{A}_{n-1}),\quad (\mathsf{D}_n,\mathsf{D}_{n-1}),\quad (\mathsf{E}_6,\mathsf{D}_5),\quad (\mathsf{E}_7,\mathsf{E}_6).
\]
The pairs
$(\mathsf A_{n-1},\mathsf A_{k-1}\times\mathsf A_{n-k-1})$,
$(\mathsf C_n,\mathsf A_{n-1})$, and
$(\mathsf D_n,\mathsf A_{n-1})$ correspond, respectively, to
the Grassmannian $\operatorname{Gr}(k,n)$, the Lagrangian
Grassmannian $\operatorname{LGr}(n,2n)$, and the orthogonal Grassmannian
$\operatorname{OGr}(n,2n)$.
The pairs $(\mathsf B_n,\mathsf B_{n-1})$ and
$(\mathsf D_n,\mathsf D_{n-1})$ give the smooth quadrics
$Q^{2n-1}$ and $Q^{2n-2}$, where
$Q^r\subseteq\mathbb P^{r+1}$.
The pairs $(\mathsf E_6,\mathsf D_5)$ and
$(\mathsf E_7,\mathsf E_6)$ give the Cayley plane and
the Freudenthal variety.

In the following statement, we write $\mathscr O_X^H$ for the trivial Hodge module on $X$ and $\operatorname{IC}_Z^H(k)$ for the $k$-th Tate twist of the intersection cohomology Hodge module associated to a subvariety $Z\subseteq X$.

\begin{thmx}\label{thm:intro-local-cohomology}
Let $X=G/P$ be a compact Hermitian symmetric space.

There is an explicit root-theoretic construction, uniform across
Lie types, which assigns to each Schubert variety
$Z_x\subseteq X$ and integers $q\geq0$, $p\in\mathbb Z$,
a finite index set $\mathscr B_x(q,p)$ and an assignment
$\xi\mapsto Z_\xi$ of Schubert subvarieties of $Z_x$,
not necessarily distinct, such that
\[
\operatorname{Gr}^W_p
\mathscr H^q_{Z_x}(\mathscr O_X^H)
\cong
\bigoplus_{\xi\in\mathscr B_x(q,p)}
\operatorname{IC}_{Z_\xi}^H
\left(\frac{\dim Z_\xi-p}{2}\right).
\]
\end{thmx}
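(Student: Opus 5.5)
Write $B\subseteq P$ for a Borel subgroup and $W^P$ for the minimal-length coset representatives, so that the Schubert cells $C_y=ByP/P$ with $y\in W^P$ stratify $X$. The proof will follow the strategy of \cite{schubertLC} for Grassmannians, now in root-theoretic form. The local cohomology modules $\mathscr H^q_{Z_x}(\mathscr O_X^H)$ are $B$-equivariant mixed Hodge modules, hence constructible with respect to this stratification, and their $\mathscr D$-simple constituents are (Tate twists of) $\operatorname{IC}_{Z_y}^H$ for $y\le x$. The plan is to compute everything at the level of Grothendieck groups of mixed Hodge modules with weights. Purity and semisimplicity will then upgrade the resulting class identities to statements about $\operatorname{Gr}^W_p$.

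\textbf{Step 1: dualize and pass to stalks.} Let $j\colon U=X\smallsetminus Z_x\to X$. The triangle $R\Gamma_{Z_x}\mathscr O_X^H\to\mathscr O_X^H\to Rj_*j^*\mathscr O_X^H$ reduces the computation to $Rj_*\mathscr O_U^H$. Since $X=G/P$ has finitely many $B$-orbits, it is equivalent to compute $i_y^!\mathscr O_X^H$, or dually $i_y^*$, on each cell $C_y\subseteq Z_x$. Each cell $C_y$ is an affine space, so its compactly supported cohomology is pure Tate. By Verdier duality and the local-global description of costalks, the class of $R\Gamma_{Z_x}\mathscr O_X^H$ in the Grothendieck group $K_0(\mathrm{MHM}_B(X))$ is determined by the weight-graded costalks along cells. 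Explicitly, on $C_y$ we get $\mathbb Q^H_{C_y}[\dim C_y](-\operatorname{codim})$ when $y\le x$, placed in degree $\operatorname{codim}_X C_y$, and pure of a computable weight. Writing the class in the basis $\{[\operatorname{IC}^H_{Z_y}(k)]\}$ then requires inverting the matrix of local IC Hodge polynomials. For $(G,P)$ Hermitian symmetric these are the parabolic Kazhdan--Lusztig polynomials $P_{y,x}$, which are known explicitly by Lascoux--Schützenberger, Boe, and Enright--Shelton via Dyck-pattern-type combinatorics. The inverse polynomials $Q_{y,x}$ (inverse parabolic KL polynomials) are also monomial or explicit in the Hermitian case.

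\textbf{Step 2: read off the index set.} The coefficient of $[\operatorname{IC}^H_{Z_y}]$, with weight and cohomological degree tracked, is a signed combination of $q$-powers built from the inverse KL polynomials. I will define $\mathscr B_x(q,p)$ to be the set of terms in this expansion, made explicit root-theoretically. I expect it to be given by Boe's description of the $Q_{y,x}$ through removal of ``capacity'' subsets of roots in the abelian nilradical $\mathfrak u_P$, that is, through chains of strongly orthogonal roots and lattice paths in the Hasse diagram of $W^P$. Assigning $\xi\mapsto Z_\xi$ records the $y$ involved. A term contributes to $\mathscr H^q$ in weight $p$ according to its degree and weight shifts, and weight $p$ forces the Tate twist $(\dim Z_\xi-p)/2$, since $\operatorname{IC}^H_{Z}$ is pure of weight $\dim Z$.

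\textbf{Step 3: remove cancellation.} This is the main obstacle. A Grothendieck-group identity determines $\operatorname{Gr}^W_p\mathscr H^q$ only if there is no cancellation between different $q$. I would handle this with a parity and purity argument. Since $\mathscr H^q_{Z_x}\mathscr O_X^H$ has weights $\ge \dim X+q$, and the stalks have a Tate, parity-vanishing structure coming from the KL parity property, each pair (weight, twist) can appear in only one degree $q$. Concretely, I would show that the expression in $\mathscr B_x$ is positive termwise, and that a term of weight $p$ occurs only for a single $q$ determined by $p-\dim Z_\xi$ and the length. The ``mixed'' contributions can then be separated degree by degree. If this parity argument does not close up, the fallback is an induction on $\ell(x)$ using the Mayer--Vietoris or Gysin sequences for $Z_x=C_x\sqcup\partial Z_x$, together with strictness of $W$, exactly as in \cite{schubertLC}. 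Finally, semisimplicity of $\operatorname{Gr}^W_p$, which is a polarizable pure Hodge module, turns the class computation into the stated direct sum decomposition.
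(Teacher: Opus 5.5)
\textbf{Verdict: there is a genuine gap, and it is in Step 3.}

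Your Steps 1--2 compute only the weight-graded Euler characteristic $\sum_q(-1)^q\,[\operatorname{Gr}^W_p\mathscr H^q_{Z_x}(\mathscr O_X^H)]$. The costalks along cells and the inverse Kazhdan--Lusztig expansion carry exactly the information in the terms $N_y$ of the Grothendieck--Cousin complex with their weight layers (Proposition~\ref{prop:am-weight-normalization}). They say nothing about the differentials.

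The parity/purity argument you propose in Step 3 is not valid. Local cohomology is genuinely mixed, and neither the bound ``weights $\geq d_X+q$'' nor KL parity ties a (label, weight) pair to a single degree. In the paper's formula, the summand indexed by a terminal pair $(y,\Omega)$ has $q=\ell(y)$ and $p=d_X+\ell(y)+|\Omega_y^+|$. So suppose two surviving terminal pairs have the same label $t$, their values of $\ell(y)$ differ by one, and their values of $|\Omega_y^+|$ differ by one in the opposite direction. They put the same $\operatorname{IC}^H_{Z_t}(k)$ in adjacent degrees with the same weight, and these cancel in $K_0$.

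This does occur. In a Grassmannian, let the cup diagram of $t$ contain a cup of length $9$ enclosing two length-$3$ cups, and separately a chain of nested cups of lengths $7,5,3,1$. Compare two flips:
\begin{itemize}
\item the length-$9$ cup together with the length-$3$ cup of the chain;
\item the two enclosed length-$3$ cups together with the length-$7$ cup.
\end{itemize}
Both subsets have $\sum(\mathrm{len}-1)=10$. They give $y,y'$ with $\ell(y')=\ell(y)-1$ and $|\Omega'^+|=|\Omega^+|+1$. Taking $x$ to be the Bruhat meet of $y$ and $y'$, both pairs have $\mathscr I(x,-,-)=\varnothing$.

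More basically, an Euler characteristic can never certify that a cube of vertices sharing one $(t,p)$ is acyclic. A rank drop in a single differential would create cohomology in two adjacent degrees that is invisible in $K_0$.

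\textbf{The missing ingredient} is control of the ranks of the Cousin differentials. This is exactly Theorem~\ref{thm:adjacentmaps}: for $y=xs_\alpha>x$, the image of $\delta_{x,y}$ contains every common composition factor of $N_x$ and $N_y$, in matching weights. The paper proves it as follows.
\begin{itemize}
\item Translating onto the $\alpha$-wall gives $TM_y\cong TM_x$, and $Tf_{y,x}$ becomes an isomorphism, so $T(M_x/\operatorname{im}f_{y,x})=0$.
\item Lemma~\ref{lem:am-common-descent} shows that every common factor $L_t$ satisfies $ts_\alpha<t$, so it survives $T$ and is forced into the image.
\end{itemize}
Together with strictness of $W_\bullet$, this splits each $\operatorname{Gr}^W_p$ of the Cousin complex into Koszul cubes whose edges are isomorphisms (Lemma~\ref{lem:lc-boolean}). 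Only the singleton cubes contribute, namely the terminal pairs with $\mathscr I(x,y,\Omega)=\varnothing$.

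Your fallback does not avoid this step. The Gysin or Mayer--Vietoris induction needs precisely the ranks of the connecting maps. In \cite{schubertLC} those ranks were supplied by superduality and the submodule structure of Kac modules, not by the exact sequences themselves.

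Relatedly, Step~2 does not yet define $\mathscr B_x(q,p)$. A signed expansion is not an index set, and turning it into a positive one requires the cube cancellation described above.
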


\noindent As an application, we give formulas for the Hodge rational homology level \cites{ParkPopa,DORhrh} and local cohomological defect of every Schubert variety in a compact Hermitian symmetric space. 

 Our construction, stated precisely in Theorem~\ref{thm:local-cohomology}, is described in terms of the admissible sets of pairwise orthogonal roots (see Section \ref{sec:Verma}), which were introduced by Enright--Shelton \cite{EnrightShelton} and used by Collingwood--Irving--Shelton
\cite{cisWeight} to describe Loewy filtrations on parabolic Verma modules. For now, we summarize as follows: for each Weyl group index $y\geq x$, we consider sets $\Omega$ of roots admissible for $y$, and write $\Omega_y^+$ for the
roots in $\Omega$ sent to positive roots by $y$.
Our formula retains precisely those pairs $(y,\Omega)$ for which
$\Omega_y^+$ contains no simple root and
$\Omega\cup\{\alpha\}$ is not admissible for $ys_\alpha$, for all simple roots $\alpha$ with $x\leq ys_\alpha<y$. Such a pair $(y,\Omega)$ belongs to $\mathscr B_x(q,p)$ if $q=\ell(y)$ and $p=\dim X+\ell(y)+|\Omega_y^+|$. The assignment $(y,\Omega)\mapsto Z_{(y,\Omega)}$ is determined by the minimal coset representative of
$y\prod_{\gamma\in\Omega_y^+}s_\gamma$. 

In Section \ref{sec:shifted-shapes}, Theorem \ref{thm:intro-local-cohomology} is translated into the language of shifted Young diagrams in the case of Lagrangian
and orthogonal Grassmannians. Upon restriction to the opposite big cells, our results give, for the first time, the weight filtration for rank varieties of symmetric and skew-symmetric matrices, in all sizes and ranks. We recover
the composition factor formulas of Raicu--Weyman
\cite{rw,raicu2016local}, and extend the symmetric determinant and
Pfaffian hypersurface calculations of \cite{LYsemi}. 

The proof of Theorem \ref{thm:intro-local-cohomology} uses the Grothendieck--Cousin complex \cite{kempf}, viewed as a
complex of mixed Hodge modules \cite{schubertLC}.
The new step is a uniform description of the images of adjacent
maps between parabolic Verma modules
(Theorem~\ref{thm:adjacentmaps}). This yields direct sum decompositions of weighted pieces of the Grothendieck--Cousin complexes
into Koszul complexes, from which we obtain Theorem \ref{thm:intro-local-cohomology}. In \cite{schubertLC}, the corresponding statement was proved using superduality and the known submodule structure of Kac modules for the general linear Lie superalgebra. Here we give a more elementary proof using only the known composition factors of parabolic Verma modules and translation functors. The argument is intrinsic to parabolic category $\mathcal O$ and applies uniformly to all Hermitian pairs.

\subsection*{Organization}  Section \ref{sec:shifted-shapes} is a statement of results in the cases $(\mathsf C_n,\mathsf A_{n-1})$ and
$(\mathsf D_n,\mathsf A_{n-1})$, which are proven and applied to symmetric determinantal and Pfaffian varieties in Section~\ref{sec:shifted-pairs}.  Section~\ref{sec:preliminaries} collects the necessary preliminaries on Lie algebras, mixed Hodge modules, and localization. We prove Theorem~\ref{thm:adjacentmaps} on maps of Verma modules in Section~\ref{sec:adjacent-proof} and deduce the weight filtration on local cohomology, Hodge rational homology level, and local cohomological defect in Section~\ref{sec:local-cohomology}. Sections~\ref{sec:quadrics} and~\ref{sec:exceptional-pairs} treat quadrics and the exceptional Hermitian symmetric spaces, respectively.

\subsection{The Lagrangian and orthogonal Grassmannians}\label{sec:shifted-shapes}

We give a combinatorial form of the local cohomology
formula for types $(\mathsf C_n,\mathsf A_{n-1})$ and
$(\mathsf D_n,\mathsf A_{n-1})$, corresponding to the Lagrangian Grassmannian
$X=\operatorname{LGr}(n,2n)$ and the orthogonal Grassmannian $X=\operatorname{OGr}(n,2n)$. Set $m=n$ in the first case and $m=n-1$ in the second.
In either case, $d_X=\dim X=m(m+1)/2$. 

Schubert varieties in $X$ are parametrized by \defi{strict partitions} (see, for instance \cite{EHP}), which are sequences
$\a=(a_1,\cdots,a_k)$ of integers satisfying
\begin{equation}\label{eq:strict}
m\geq a_1>a_2>\cdots>a_k>0.
\end{equation}
The corresponding Schubert variety $Z_{\a}$ has dimension
\[
\dim Z_{\a}=|\a|=a_1+\cdots +a_k,
\]
and given two partitions $\a$ and $\b$ satisfying (\ref{eq:strict}) we have
\[
Z_{\b}\subseteq Z_{\a}\quad \iff \quad \b\subseteq \a,
\]
where $\b\subseteq\a$ means that $b_i\leq a_i$ for all $i\geq1$,
after extending both partitions by zero parts.

We identify a strict partition with its \defi{shifted diagram}
\[
 \a\quad\longleftrightarrow\quad
 \{(i,j)\in\mathbb Z_{>0}^2\mid
       1\leq i\leq k,\ i\leq j\leq i+a_i-1\}.
\]
In words, the diagram has $a_i$ boxes in row $i$, and row $i$ begins
in column $i$. Given two strict partitions $\b\subseteq \a$ the corresponding \defi{skew shifted shape} is $\a/\b:=\a\setminus\b$. For example, the following are the shifted diagram $(6,3,1)$ and the
skew shifted shape $(6,3,1)/(3,1)$\\ 
\begin{center}
\begin{minipage}{.35\textwidth}
\centering
\begin{tikzpicture}[x=\unitsize,y=\unitsize]
\foreach \shrow/\shlen in {1/6,2/3,3/1}{
  \pgfmathtruncatemacro{\shlast}{\shrow+\shlen-1}
  \foreach \shcol in {\shrow,...,\shlast}{
    \draw[thick] ({2*(\shcol-1)},{-2*(\shrow-1)}) rectangle ++(2,-2);
  }
}
\end{tikzpicture}
\par\smallskip $\a=(6,3,1)$
\end{minipage}
\qquad
\begin{minipage}{.35\textwidth}
\centering
\begin{tikzpicture}[x=\unitsize,y=\unitsize]
\foreach \shrow/\shlen in {1/3,2/1}{
  \pgfmathtruncatemacro{\shlast}{\shrow+\shlen-1}
  \foreach \shcol in {\shrow,...,\shlast}{
    \fill[gray!25] ({2*(\shcol-1)},{-2*(\shrow-1)}) rectangle ++(2,-2);
  }
}
\foreach \shrow/\shlen in {1/6,2/3,3/1}{
  \pgfmathtruncatemacro{\shlast}{\shrow+\shlen-1}
  \foreach \shcol in {\shrow,...,\shlast}{
    \draw[thick] ({2*(\shcol-1)},{-2*(\shrow-1)}) rectangle ++(2,-2);
  }
}
\end{tikzpicture}
\par\smallskip $\a/\b=(6,3,1)/(3,1)$
\end{minipage}
\end{center}

\medskip 

\noindent In the second diagram, the gray
boxes belong to $(3,1)$ and the unshaded boxes form the skew shape. We call a shape
\defi{connected} if any two of its boxes can be joined by a sequence
of boxes sharing edges. 

A \defi{path} $P$ in $\mathbb Z_{>0}^2$ is a collection of boxes
\[
 P=\{x_1,\ldots,x_t\},\qquad x_u=(i_u,j_u),\qquad t\geq1,
\]
satisfying, for $1\leq u<t$,
\[
 x_{u+1}=(i_u+1,j_u)\quad\textnormal{or}\quad
 x_{u+1}=(i_u,j_u-1).
\]
In words, a path is obtained by starting at $x_1$ and
walking in any combination of South and West. 

We define the \defi{level} of a box by
$\operatorname{lv}(i,j)=i+j$.
The path $P$ is called \defi{almost Dyck} if
\begin{equation}\label{eq:sh-almost-dyck}
 \operatorname{lv}(x_u)\leq\operatorname{lv}(x_1)
 \qquad\textnormal{for all }1\leq u\leq t.
\end{equation}
If, in addition, $\operatorname{lv}(x_t)=\operatorname{lv}(x_1)$,
then $P$ is called a \defi{Dyck path}. Thus an almost Dyck path
has no box below the antidiagonal through its first box; a Dyck
path also ends on this antidiagonal.
The following are a Dyck path, an almost Dyck path that is not
Dyck, and a path that is not almost Dyck:\\

\begin{center}
\begin{minipage}{.30\textwidth}
\centering
\begin{tikzpicture}[x=\unitsize,y=\unitsize,baseline=0]
\tikzset{vertex/.style={}}%
\tikzset{edge/.style={  thick}}%
\draw[dotted] (0,0) -- (14,0);
\draw[dotted] (0,2) -- (14,2);
\draw[dotted] (0,4) -- (14,4);
\draw[dotted] (0,6) -- (14,6);
\draw[dotted] (0,8) -- (14,8);
\draw[dotted] (0,10) -- (14,10);
\draw[dotted] (2,-2) -- (2,12);
\draw[dotted] (4,-2) -- (4,12);
\draw[dotted] (6,-2) -- (6,12);
\draw[dotted] (8,-2) -- (8,12);
\draw[dotted] (10,-2) -- (10,12);
\draw[dotted] (12,-2) -- (12,12);
\draw[dotted] (2,-2) -- (14,10);
\draw[red, line width=6pt] (3,0) -- (3,3) -- (5,3) -- (5,5) -- (7,5) -- (7,9) -- (12,9);
%\draw[dotted] (0,0) -- (8,8);
\end{tikzpicture}
\end{minipage}
\hfill
\begin{minipage}{.30\textwidth}
\centering
\begin{tikzpicture}[x=\unitsize,y=\unitsize,baseline=0]
\tikzset{vertex/.style={}}%
\tikzset{edge/.style={  thick}}%
\draw[dotted] (0,0) -- (14,0);
\draw[dotted] (0,2) -- (14,2);
\draw[dotted] (0,4) -- (14,4);
\draw[dotted] (0,6) -- (14,6);
\draw[dotted] (0,8) -- (14,8);
\draw[dotted] (0,10) -- (14,10);
\draw[dotted] (2,-2) -- (2,12);
\draw[dotted] (4,-2) -- (4,12);
\draw[dotted] (6,-2) -- (6,12);
\draw[dotted] (8,-2) -- (8,12);
\draw[dotted] (10,-2) -- (10,12);
\draw[dotted] (12,-2) -- (12,12);
\draw[dotted] (2,-2) -- (14,10);
\draw[red, line width=6pt] (6,9) -- (12,9);
%\draw[dotted] (0,0) -- (8,8);
\end{tikzpicture}
\end{minipage}
\hfill
\begin{minipage}{.30\textwidth}
\centering
\begin{tikzpicture}[x=\unitsize,y=\unitsize,baseline=0]
\tikzset{vertex/.style={}}%
\tikzset{edge/.style={  thick}}%
\draw[dotted] (0,0) -- (14,0);
\draw[dotted] (0,2) -- (14,2);
\draw[dotted] (0,4) -- (14,4);
\draw[dotted] (0,6) -- (14,6);
\draw[dotted] (0,8) -- (14,8);
\draw[dotted] (0,10) -- (14,10);
\draw[dotted] (2,-2) -- (2,12);
\draw[dotted] (4,-2) -- (4,12);
\draw[dotted] (6,-2) -- (6,12);
\draw[dotted] (8,-2) -- (8,12);
\draw[dotted] (10,-2) -- (10,12);
\draw[dotted] (12,-2) -- (12,12);
\draw[red, line width=6pt] (3,0) -- (3,3) -- (7,3) -- (7,7) -- (10,7) ;
\draw[dotted] (2,-2) -- (14,10);
%\draw[dotted] (0,0) -- (8,8);
\end{tikzpicture}
\end{minipage}
\end{center}

\medskip

For a nonempty connected skew shifted shape $\eta$, its
\defi{border strip} is the path
\begin{equation}\label{eq:sh-inner-boundary}
 \theta(\eta)=\{(i,j)\in\eta\mid(i-1,j-1)\notin\eta\}.
\end{equation}
Thus $\theta(\eta)$ consists of the boxes on the Northwest border
of $\eta$. Let $\theta=\{x_1,\ldots,x_t\}$ be an almost Dyck path,
with $x_u=(i_u,j_u)$ listed from Northeast to Southwest, and set
\[
 r=\max\{u\mid\operatorname{lv}(x_u)=\operatorname{lv}(x_1)\}.
\]
We define its \defi{shifted completion} (see \cite{brenti}) by
\begin{equation}\label{eq:sh-completion}
 \theta_s=\theta\sqcup E(\theta),\qquad
 E(\theta)=\{(i_u+1,j_u+1)\mid r<u\leq t\}.
\end{equation}
In words, for each box of $\theta$ after its last visit to the
initial antidiagonal, add one box to its southeast. For $\eta=(7,6,4,2)/(4,3)$, the border strip
$\theta=\theta(\eta)$ and the added boxes $E(\theta)$ are shown below.\\

\begin{center}
\begin{tikzpicture}[x=\unitsize,y=\unitsize]
\foreach \shrow/\shlen in {1/4,2/3}{
  \pgfmathtruncatemacro{\shlast}{\shrow+\shlen-1}
  \foreach \shcol in {\shrow,...,\shlast}{
    \fill[gray!25] ({2*(\shcol-1)},{-2*(\shrow-1)})
      rectangle ++(2,-2);
  }
}
\foreach \shrow/\shlen in {1/7,2/6,3/4,4/2}{
  \pgfmathtruncatemacro{\shlast}{\shrow+\shlen-1}
  \foreach \shcol in {\shrow,...,\shlast}{
    \draw[thick] ({2*(\shcol-1)},{-2*(\shrow-1)})
      rectangle ++(2,-2);
  }
}
\draw[red,line width=4pt]
  (13.6,-1)--(9,-1)--(9,-5)--(4.4,-5);
\draw[orange,line width=4pt] (6.4,-7)--(9.6,-7);
\end{tikzpicture}
\par\smallskip
$r=5,\qquad E(\theta)=\{(4,4),(4,5)\},\qquad |\theta_s|=9.$
\end{center}

\medskip

Following \cite[Section 3]{brenti},
we define the \defi{shifted Dyck shapes} recursively as follows:
\begin{enumerate}
\item The empty shape is shifted Dyck.
\item A disconnected skew shifted shape is shifted Dyck if and only if
each of its connected components is shifted Dyck.
\item A nonempty connected skew shifted shape $\eta$ is shifted Dyck
if and only if its border strip $\theta=\theta(\eta)$ satisfies
\begin{equation}\label{eq:sh-dyck-recursion}
 \theta\textnormal{ is almost Dyck},\qquad
 \theta_s\subseteq\eta,\qquad
 |\theta_s\setminus\theta|\equiv0\pmod2,
\end{equation}
and the remaining skew shape $\eta\setminus\theta_s$ is shifted Dyck.
\end{enumerate}
Thus, one successively removes completed border strips,
applying the rule separately to the connected components that remain. The \defi{depth} $\operatorname{dp}(\eta)$ is the total number
of completed boundary strips removed by the recursion, counting
removals in different connected components separately.
We have $\operatorname{dp}(\varnothing)=0$.

Given a strict partition $\a$, an \defi{$\a$-admissible shifted Dyck
pattern} is a skew shape $\DD=\a/\b$ that is shifted Dyck.
We write
\begin{equation}\label{eq:sh-patterns}
 \operatorname{sDyck}(\a)=
 \{\a/\b\mid\b\subseteq\a,\ \a/\b\textnormal{ is shifted Dyck}\},
 \qquad \a^{\DD}=\b.
\end{equation}
The boundary removals are determined by the skew shape,
so the pattern is completely specified by its support $\a/\b$.
We say that the pattern has \defi{no singleton strips} if every
completed boundary strip removed in the recursion has at least
three boxes. The following shapes are shifted Dyck. \\
\begingroup
\newcommand{\shExampleGrid}[1]{%
  \foreach \shrow/\shlen in {#1}{%
    \pgfmathtruncatemacro{\shlast}{\shrow+\shlen-1}%
    \foreach \shcol in {\shrow,...,\shlast}{%
      \draw[thick] ({2*(\shcol-1)},{-2*(\shrow-1)})
        rectangle ++(2,-2);
    }%
  }%
}
\begin{center}
\begin{minipage}[b]{.30\textwidth}
\centering
\begin{tikzpicture}[x=\unitsize,y=\unitsize]
\fill[gray!25] (0,0) rectangle (4,-2);
\shExampleGrid{1/5,2/4,3/2}
\draw[red,line width=4pt]
  (9.6,-1)--(5,-1)--(5,-3)--(2.4,-3);
\draw[orange,line width=4pt]
  (9.6,-3)--(7,-3)--(7,-5)--(4.4,-5);
\end{tikzpicture}
\par\smallskip
$\eta=(5,4,2)/(2)$\\
$\operatorname{dp}(\eta)=1$
\end{minipage}
\hfill
\begin{minipage}[b]{.30\textwidth}
\centering
\begin{tikzpicture}[x=\unitsize,y=\unitsize]
\fill[gray!25] (0,0) rectangle (4,-2);
\fill[gray!25] (2,-2) rectangle (4,-4);
\shExampleGrid{1/5,2/4,3/2}
\draw[red,line width=4pt] (9.6,-1)--(5,-1)--(5,-5.6);
\draw[red,line width=4pt] (9.6,-3)--(7,-3)--(7,-5.6);
\end{tikzpicture}
\par\smallskip
$\eta=(5,4,2)/(2,1)$\\
$\operatorname{dp}(\eta)=2$
\end{minipage}
\hfill
\begin{minipage}[b]{.30\textwidth}
\centering
\begin{tikzpicture}[x=\unitsize,y=\unitsize]
\fill[gray!25] (0,0) rectangle (4,-2);
\shExampleGrid{1/5,2/4,3/3,4/1}
\draw[red,line width=4pt]
  (9.6,-1)--(5,-1)--(5,-3)--(2.4,-3);
\draw[orange,line width=4pt]
  (9.6,-3)--(7,-3)--(7,-5)--(4.4,-5);
\draw[red,line width=4pt] (8.4,-5)--(9.6,-5);
\draw[red,line width=4pt] (6.4,-7)--(7.6,-7);
\end{tikzpicture}
\par\smallskip
$\eta=(5,4,3,1)/(2)$\\
$\operatorname{dp}(\eta)=3$
\end{minipage}
\end{center}

\medskip

\noindent The following shapes are not shifted Dyck:\\

\begin{center}
\begin{minipage}[b]{.30\textwidth}
\centering
\begin{tikzpicture}[x=\unitsize,y=\unitsize]
\fill[gray!25] (0,0) rectangle (4,-2);
\shExampleGrid{1/6,2/5,3/3}
\draw[red,line width=4pt]
  (11.6,-1)--(5,-1)--(5,-3)--(2.4,-3);
\draw[orange,line width=4pt]
  (11.6,-3)--(7,-3)--(7,-5)--(4.4,-5);
\end{tikzpicture}
\par\smallskip $(6,5,3)/(2)$
\end{minipage}
\hfill
\begin{minipage}[b]{.30\textwidth}
\centering
\begin{tikzpicture}[x=\unitsize,y=\unitsize]
\fill[gray!25] (0,0) rectangle (4,-2);
\fill[gray!25] (2,-2) rectangle (4,-4);
\shExampleGrid{1/4,2/3,3/2}
\draw[red,line width=4pt] (7.6,-1)--(5,-1)--(5,-5.6);
\end{tikzpicture}
\par\smallskip $(4,3,2)/(2,1)$
\end{minipage}
\hfill
\begin{minipage}[b]{.30\textwidth}
\centering
\begin{tikzpicture}[x=\unitsize,y=\unitsize]
\fill[gray!25] (0,0) rectangle (4,-2);
\shExampleGrid{1/5,2/4,3/1}
\draw[thick,dashed] (6,-4) rectangle (8,-6);
\draw[red,line width=4pt]
  (9.6,-1)--(5,-1)--(5,-3)--(2.4,-3);
\draw[orange,line width=4pt]
  (9.6,-3)--(7,-3)--(7,-5)--(4.4,-5);
\end{tikzpicture}
\par\smallskip $(5,4,1)/(2)$
\end{minipage}
\end{center}

\medskip

\noindent The first completion adds five boxes, which is odd. The second border strip
is not almost Dyck. In the third, the completion leaves the skew shape.
\endgroup

Fix a strict partition $\a$. Given strict partitions
$\underline c\subseteq\b\subseteq\a$, set
\begin{equation}
 \eta=\b/\underline c,\qquad \mathbb B=\a/\b.
\end{equation}
We call the boxes of $\mathbb B$ \defi{bullets}, and call
$\DD=(\eta;\mathbb B)$ an \defi{$\a$-admissible augmented shifted
Dyck pattern} if the following conditions hold:
\begin{enumerate}
\item The skew shape $\eta$ is shifted Dyck.
\item For every $x\in\mathbb B$ such that $\b\cup\{x\}$ is a
shifted diagram, the skew shape
\begin{equation}\label{eq:sh-bullet-condition}
 (\b\cup\{x\})/\underline c=\eta\cup\{x\}
\end{equation}
is not shifted Dyck.
\end{enumerate}

The \defi{support}, \defi{remaining diagram}, \defi{depth}, and
\defi{number of bullets} of $\DD$ are, respectively,
\begin{align}
 \operatorname{supp}(\DD)&=\eta\sqcup\mathbb B=\a/\underline c,
 &\qquad \a^{\DD}&=\a\setminus\operatorname{supp}(\DD)=\underline c,
 \label{eq:sh-augmented-support}\\
 \operatorname{dp}(\DD)&=\operatorname{dp}(\eta),
 & b(\DD)&=|\mathbb B|=|\a|-|\b|.
 \label{eq:sh-augmented-statistics}
\end{align}
We write $\operatorname{sDyck}^{\bullet}(\a)$ for the set of
$\a$-admissible augmented shifted Dyck patterns. Patterns without
bullets are identified with $\operatorname{sDyck}(\a)$ via
$\eta\leftrightarrow(\eta;\varnothing)$. The empty pattern has
$\eta=\mathbb B=\varnothing$ and $\a^{\DD}=\a$.

For $q\in\mathbb Z_{\geq0}$ and $p\in\mathbb Z$, define
\begin{align}
 \mathscr A^{\mathrm{sh}}(\a;q)
 &=\left\{\DD=(\eta;\mathbb B)\in
       \operatorname{sDyck}^{\bullet}(\a)\ \middle|\
       \begin{gathered}
        \eta\textnormal{ has no singleton strips},\\
        b(\DD)=q+|\a|-d_X
       \end{gathered}\right\},\label{eq:sh-lc-patterns}\\
 \mathscr A^{\mathrm{sh}}_p(\a;q)
 &=\{\DD\in\mathscr A^{\mathrm{sh}}(\a;q)\mid
                  \operatorname{dp}(\DD)=p-q-d_X\}.
 \label{eq:sh-weight-patterns}
\end{align}

\noindent With this notation, we are ready to state the main result of this subsection.

\begin{thmx}\label{thm:sh-local-cohomology}
Let $X$ be either space above, and let $\a$ be a strict partition. For $q\geq0$
and $p\in\mathbb Z$, there is an isomorphism of pure
Hodge modules
\begin{equation}\label{eq:sh-weight-formula}
 \operatorname{Gr}^W_p\mathscr H^q_{Z_{\a}}(\mathscr O_X^H)
 \cong
 \bigoplus_{\DD\in\mathscr A^{\mathrm{sh}}_p(\a;q)}
 \operatorname{IC}_{Z_{\a^{\DD}}}^H
       \left(\frac{|\a^{\DD}|-p}{2}\right).
\end{equation}
\end{thmx}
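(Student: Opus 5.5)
The plan is to deduce Theorem~\ref{thm:sh-local-cohomology} from the uniform root-theoretic statement (Theorem~\ref{thm:intro-local-cohomology}, in its precise form Theorem~\ref{thm:local-cohomology}) by a purely combinatorial translation. I will not redo any Hodge-theoretic argument. Concretely, for $X=\operatorname{LGr}(n,2n)$ or $\operatorname{OGr}(n,2n)$ I will construct, for each Schubert variety $Z_x=Z_{\a}$, a bijection
\[
\mathscr B_x(q,p)\longleftrightarrow \mathscr A^{\mathrm{sh}}_p(\a;q)
\]
that matches the assigned Schubert varieties, so that $Z_{(y,\Omega)}=Z_{\a^{\DD}}$. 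Once this holds, the two direct sums agree term by term. The Tate twists also agree, since $\dim Z_{\a^{\DD}}=|\a^{\DD}|$.

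First I set up the dictionary. Minimal coset representatives $W^P$ correspond to strict partitions $m\geq a_1>\cdots>0$, with Bruhat order given by containment. The boxes of the shifted staircase $(m,m-1,\dots,1)$ correspond to the roots of the abelian nilradical, which are the positive roots not in $\m$. The inversion set of $y\in W^P$ is then a shifted diagram. I must fix the convention so that $\ell(y)=d_X-|\b|$ when $Z_y=Z_{\b}$; this is the codimension, consistent with $\mathscr H^q$ being nonzero for $q=\operatorname{codim}$.

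With this convention, the data translate as follows. Choosing $y\geq x$ means choosing $\b\subseteq\a$, and the boxes of $\a/\b$ are the bullets. The condition $q=\ell(y)$ becomes $b(\DD)=|\a|-|\b|=q+|\a|-d_X$, as required. For $\Omega$ admissible for $y$, the roots in $\Omega_y^+$ are positive nilradical roots with $y\prod s_\gamma$ lying below, and I will identify them with the completed border strips in the shifted Dyck recursion applied to $\b/\underline c$. Each $\gamma\in\Omega_y^+$ corresponds to one strip $\theta_s$, so $|\Omega_y^+|=\operatorname{dp}(\eta)$. Then $p=d_X+\ell(y)+|\Omega_y^+|=d_X+q+\operatorname{dp}$, matching \eqref{eq:sh-weight-patterns}. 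The minimal representative of $y\prod s_\gamma$ should be $\underline c$, obtained by deleting the strips. This is the analogue of Brenti's description of Kazhdan--Lusztig polynomials for Hermitian symmetric pairs via shifted Dyck patterns \cite{brenti}, combined with Enright--Shelton admissibility \cite{EnrightShelton}. I would take the identification of admissible orthogonal sets with shifted Dyck patterns from \cite{brenti,EnrightShelton} where possible, and check the strip-to-reflection correspondence (including parity of $|\theta_s\setminus\theta|$ and the almost-Dyck condition) by induction on the recursion.

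Next I translate the two selection conditions. The condition that $\Omega_y^+$ contain no simple root should say that no removed strip is a single box. A single box is the only strip whose reflection is simple, because a completed strip of length $\geq2$ has at least three boxes by the parity condition. This gives ``no singleton strips''. The simple roots $\alpha$ with $x\leq ys_\alpha<y$ correspond to boxes $x'\in\a/\b$ such that $\b\cup\{x'\}$ is a shifted diagram. The condition that $\Omega\cup\{\alpha\}$ not be admissible for $ys_\alpha$ then becomes the bullet condition \eqref{eq:sh-bullet-condition}, namely that $\eta\cup\{x'\}$ not be shifted Dyck.

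I expect the main obstacle to be the middle step. One must prove that admissibility of $\Omega$ for $y$ coincides exactly with the recursive shifted Dyck condition on $\b/\underline c$, and that adding $\alpha$ corresponds to enlarging $\eta$ by exactly the box $x'$. The subtle points are the boundary behaviour along the diagonal: the completion $E(\theta)$ and the mod $2$ condition differ between types C and D, reflecting long versus short roots, so $m=n$ versus $m=n-1$ must be tracked. I would handle this by inducting on $\operatorname{dp}$. At each step I would peel off the outermost strip and verify that the corresponding root is the highest root of the sub-root-system cut out by the connected component, as in Enright--Shelton's construction.
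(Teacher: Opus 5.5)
There is a genuine gap. Your whole argument rests on a root-by-root dictionary: each $\gamma\in\Omega_y^+$ is matched with one completed strip of $\eta=\b/\underline c$, and this is extended to the rest of $\Omega$. You leave this unproved. You flag it as the main obstacle and sketch an induction that peels off the outermost strip, but nothing in the proposal carries it out. Brenti's theorem cannot simply be quoted for it, because it is a statement about parabolic Kazhdan--Lusztig polynomials, not about Enright--Shelton roots.

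Both selection conditions are then read off from this unproved dictionary, and neither translation is justified.
\begin{itemize}
\item For the first, you claim a singleton is the only strip whose reflection is simple ``because a completed strip of length $\geq2$ has at least three boxes by the parity condition.'' This is a non sequitur. Oddness of $|\theta_s|$ says nothing about whether the associated root is simple, and ``the reflection of a strip'' is undefined until the dictionary exists.
\item For the second, the equivalence ``$\Omega\cup\{\alpha\}\in\mathscr E_{ys_\alpha}$ iff $\eta\cup\{u\}$ is shifted Dyck'' is only asserted. It is subtler than ``adding a root adds a box.'' The root $\alpha$ may already lie in $\Omega$ with $y\alpha\in-\Phi(\mathfrak u)$. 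In that case the set does not change; only its sign pattern under $ys_\alpha$ does. So your dictionary would also have to account for the roots of $\Omega\setminus\Omega_y^+$, which it never mentions.
\end{itemize}
Also, the shifted Dyck recursion is identical in types $\mathsf C$ and $\mathsf D$; only $m$ changes. There is no type-dependent boundary behaviour to track.

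The missing idea is that no such dictionary is needed. The paper proceeds in three steps.
\begin{itemize}
\item Brenti's Theorem 4.1, Irving's interpretation of Loewy layers via inverse parabolic Kazhdan--Lusztig polynomials, and the normalization of Proposition~\ref{prop:am-weight-normalization} give the weight-graded composition factors of $N_{w_{\b}}$ as in \eqref{eq:sh-standard-factors}. Both this and the Collingwood--Irving--Shelton description are multiplicity-free, and $\Omega\mapsto t_y(\Omega)$ is injective by \eqref{eq:admissible-factors}. Comparing them yields a bijection $\mathscr E_y\cong\operatorname{sDyck}(\b)$ with $t_y(\Omega)=w_{\underline c}$ and $|\Omega_y^+|=\operatorname{dp}(\b/\underline c)$, without matching individual roots to strips.
\item The simple-root conditions are translated representation-theoretically. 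Proposition~\ref{prop:am-common} says $\alpha\in\Omega$ iff $L_t$ is a common factor of $N_y$ and $N_{ys_\alpha}$. This converts ``$\alpha\in\Omega_y^+\cap\Pi$'' into ``$\eta\setminus\{u\}\in\operatorname{sDyck}(\b\setminus\{u\})$'' for the corresponding removable corner $u$. Together with Lemmas~\ref{lem:lc-deletion} and~\ref{lem:lc-boolean}, it converts ``$\alpha\in\mathscr I(x,y,\Omega)$'' into ``$\eta\cup\{u\}\in\operatorname{sDyck}(\b\cup\{u\})$'' for the corresponding addable corner.
\item The only combinatorics needed is Lemma~\ref{lem:sh-corner-removal}. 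It identifies the singleton strips with exactly those removable corners $u$ for which $\eta\setminus\{u\}$ remains shifted Dyck.
\end{itemize}
To salvage your route, you would have to prove the full strip-to-root correspondence, including its compatibility with adding and removing corners. That is considerably more work than this comparison argument.
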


 The proof is given in Section~\ref{sec:shifted-proof}, and in Section \ref{sec:matrix-applications} this formula is used to give the weight filtration for rank varieties of (skew-)symmetric matrices. In the next example, we write $\operatorname{IC}_{\b}$ for the intersection cohomology $\mathscr D_X$-module of $Z_{\b}$.

\begin{example}\label{ex:sh-lgr-seven}
Let $X=\operatorname{LGr}(7,14)$ and $\a=(7,6,4,3)$.
Then $d_X=28$, $|\a|=20$, and $Z_{\a}$ has codimension eight.
There are seven admissible augmented shifted Dyck patterns with
no singleton strips. \\
\begingroup
\newcommand{\shlcgrid}{%
\foreach \shrow/\shlen in {1/7,2/6,3/4,4/3}{%
  \pgfmathtruncatemacro{\shlast}{\shrow+\shlen-1}%
  \foreach \shcol in {\shrow,...,\shlast}{%
    \draw[thick] ({2*(\shcol-1)},{-2*(\shrow-1)}) rectangle ++(2,-2);
  }%
}%
}
\begin{center}
\begin{minipage}[t]{.235\textwidth}
\centering
\begin{tikzpicture}[x=\unitsize,y=\unitsize]
\shlcgrid
\end{tikzpicture}
\par\smallskip $ (7,6,4,3)$
\end{minipage}%
\hfill
\begin{minipage}[t]{.235\textwidth}
\centering
\begin{tikzpicture}[x=\unitsize,y=\unitsize]
\shlcgrid
\draw[red,line width=4pt] (13.6,-3)--(11,-3)--(11,-5.6);
\fill[green!65!black] (11,-7) circle[radius=.45];
\end{tikzpicture}
\par\smallskip $(7,4,3,2)$
\end{minipage}%
\hfill
\begin{minipage}[t]{.235\textwidth}
\centering
\begin{tikzpicture}[x=\unitsize,y=\unitsize]
\shlcgrid
\draw[red,line width=4pt] (13.6,-1)--(9,-1)--(9,-5)--(4.4,-5);
\draw[orange,line width=4pt] (9.6,-7)--(6.4,-7);
\draw[red,line width=4pt] (13.6,-3)--(11,-3)--(11,-5.6);
\fill[green!65!black] (11,-7) circle[radius=.45];
\end{tikzpicture}
\par\smallskip $ (4,3)$
\end{minipage}%
\hfill
\begin{minipage}[t]{.235\textwidth}
\centering
\begin{tikzpicture}[x=\unitsize,y=\unitsize]
\shlcgrid
\draw[red,line width=4pt] (9.6,-5)--(4.4,-5);
\draw[orange,line width=4pt] (9.6,-7)--(6.4,-7);
\fill[green!65!black] (11,-5) circle[radius=.45];
\fill[green!65!black] (11,-7) circle[radius=.45];
\end{tikzpicture}
\par\smallskip $ (7,6)$
\end{minipage}%
\par\medskip
\medskip
\begin{minipage}[t]{.31\textwidth}
\centering
\begin{tikzpicture}[x=\unitsize,y=\unitsize]
\shlcgrid
\draw[red,line width=4pt] (13.6,-1)--(0.4,-1);
\draw[orange,line width=4pt] (13.6,-3)--(2.4,-3);
\draw[red,line width=4pt] (9.6,-5)--(4.4,-5);
\draw[orange,line width=4pt] (9.6,-7)--(6.4,-7);
\fill[green!65!black] (11,-5) circle[radius=.45];
\fill[green!65!black] (11,-7) circle[radius=.45];
\end{tikzpicture}
\par\smallskip $ \varnothing$
\end{minipage}%
\hfill
\begin{minipage}[t]{.31\textwidth}
\centering
\begin{tikzpicture}[x=\unitsize,y=\unitsize]
\shlcgrid
\draw[red,line width=4pt] (13.6,-1)--(9,-1)--(9,-5.6);
\draw[red,line width=4pt] (13.6,-3)--(11,-3)--(11,-5.6);
\fill[green!65!black] (9,-7) circle[radius=.45];
\fill[green!65!black] (11,-7) circle[radius=.45];
\end{tikzpicture}
\par\smallskip $ (4,3,2,1)$
\end{minipage}%
\hfill
\begin{minipage}[t]{.31\textwidth}
\centering
\begin{tikzpicture}[x=\unitsize,y=\unitsize]
\shlcgrid
\draw[red,line width=4pt] (9.6,-1)--(0.4,-1);
\draw[orange,line width=4pt] (9.6,-3)--(2.4,-3);
\draw[red,line width=4pt] (9.6,-5)--(4.4,-5);
\draw[orange,line width=4pt] (9.6,-7)--(6.4,-7);
\fill[green!65!black] (11,-1) circle[radius=.45];
\fill[green!65!black] (13,-1) circle[radius=.45];
\fill[green!65!black] (11,-3) circle[radius=.45];
\fill[green!65!black] (13,-3) circle[radius=.45];
\fill[green!65!black] (11,-5) circle[radius=.45];
\fill[green!65!black] (11,-7) circle[radius=.45];
\end{tikzpicture}
\par\smallskip $ \varnothing$
\end{minipage}%
\end{center}
\endgroup

\medskip 

\noindent The number of bullets determines the cohomological degree, after an initial shift by codimension, which is $c=8$ in this example. The first pattern belongs to $\mathscr{A}^{\mathrm{sh}}(\a;8)$, the second and third belong to $\mathscr{A}^{\mathrm{sh}}(\a;9)$, the next three belong to $\mathscr{A}^{\mathrm{sh}}(\a;10)$, and the seventh belongs to $\mathscr{A}^{\mathrm{sh}}(\a;14)$. Writing $[M]$ for the class of a holonomic $\mathscr D_X$-module in the Grothendieck group, Theorem~\ref{thm:sh-local-cohomology} gives
\[
 \big[\mathscr H^8_{Z_{\a}}(\mathscr O_X)\big]
   =\big[\operatorname{IC}_{(7,6,4,3)}\big],\quad
 \big[\mathscr H^9_{Z_{\a}}(\mathscr O_X)\big]
   =\big[\operatorname{IC}_{(7,4,3,2)}\big]
     +\big[\operatorname{IC}_{(4,3)}\big],
     \]
     \[
 \big[\mathscr H^{10}_{Z_{\a}}(\mathscr O_X)\big]
   =\big[\operatorname{IC}_{(7,6)}\big]
     +\big[\operatorname{IC}_{\varnothing}\big]
     +\big[\operatorname{IC}_{(4,3,2,1)}\big],\quad
 \big[\mathscr H^{14}_{Z_{\a}}(\mathscr O_X)\big]
   =\big[\operatorname{IC}_{\varnothing}\big],
\]
and all other local cohomology modules vanish. The graded pieces of the weight filtration are given by the number of completed paths $\theta_s$ plus the number of bullets, after an initial shift of $c+d_X$, which is $36$ in this example. Thus, the weights of the simple modules above are (from left to right):
\[
36,\, 38,\, 39,\, 39,\, 40,\, 40,\, 44.
\]
The same calculation applies to the partition
$(7,6,4,3)$ in a fixed component of $\operatorname{OGr}(8,16)$.\hfill\mbox{$\diamond$}
\end{example}

\section{Preliminaries}\label{sec:preliminaries}

\subsection{Mixed Hodge modules}

Let $X$ be a smooth complex variety of dimension $d_X$, with sheaf $\mathscr{D}_X$ of algebraic differential operators. We consider the category $\operatorname{MHM}(X)$ of algebraic mixed Hodge modules on $X$ (see \cite{saito90} or \cite[Section 8.3.3]{htt}). Part of the data of an object $M$ in $\operatorname{MHM}(X)$ is a pair $(\mathscr{M}, W_{\bullet})$, where $\mathscr{M}$ is a holonomic $\mathscr{D}_X$-module, known as the $\mathscr{D}_X$-module underlying $M$, and $W_{\bullet}$ is an increasing $\mathbb{Z}$-indexed filtration on $\mathscr{M}$ by $\mathscr{D}_X$-modules, known as the weight filtration on $M$. Morphisms in $\operatorname{MHM}(X)$ are strict with respect to the weight filtration in the sense that, for a morphism $\phi:M\to N$ of mixed Hodge modules, we have
\begin{equation}
\phi(W_k(\mathscr{M}))=\phi(\mathscr{M})\cap W_k(\mathscr{N}),    
\end{equation}
for all $k\in \mathbb{Z}$, where $(\mathscr{M},W_{\bullet})$ and $(\mathscr{N},W_{\bullet})$ are the filtered modules underlying $M$ and $N$.

The mixed Hodge module $M$ is pure of weight $w\in \mathbb{Z}$ if $\operatorname{Gr}_w^W(\mathscr{M})=\mathscr{M}$. The trivial Hodge module $\mathscr{O}_X^H$ has underlying $\mathscr{D}_X$-module $\mathscr{O}_X$ and weight filtration satisfying $\operatorname{Gr}_{d_X}^W(\mathscr{O}_X)=\mathscr{O}_X$, so is pure of weight $d_X$. Given a subvariety $Z\subseteq X$ of pure dimension $d_Z$, we consider the intersection cohomology Hodge module $\operatorname{IC}_Z^H$ \cite[Section 8.3.3(m13)]{htt}, which is pure of weight $d_Z$, and has underlying $\mathscr{D}_X$-module $\operatorname{IC}_Z$, the intersection cohomology module of Brylinski--Kashiwara \cite[Definition 3.4.1]{htt}. For $k\in \mathbb{Z}$, the Tate twist $\operatorname{IC}_Z^H(k)$ is pure of weight $d_Z-2k$ and has the same underlying $\mathscr{D}_X$-module.

Let $i:Y\hookrightarrow X$ be a locally closed subvariety of $X$. Given $M\in \operatorname{MHM}(X)$, we define the local cohomology modules of $M$ with support in $Y$ via
\begin{equation}
\mathscr{H}^q_Y(M)=H^q(i_{\ast}i^{!}M),\quad q\in \mathbb{Z},    
\end{equation}
where $i_{\ast}$ and $i^{!}$ are the direct image and pullback of mixed Hodge modules \cite[Section 4]{saito90}, lifting the $\mathscr{D}$-module functors $\int_i$ and $i^{\dagger}$ from \cite[Chapter 1.5]{htt}.

\subsection{Root systems and Weyl groups}

Let $\g$ be a simple complex Lie algebra with root system $\Phi$ and choice of positive system $\Phi^+$ with simple roots $\Pi \subseteq \Phi^+$ and Cartan and Borel subalgebras $\mathfrak{h}\subseteq \mathfrak{b}\subseteq  \g$. For roots $\gamma,\xi \in \Phi$ we use the partial order $\xi\leq\gamma$ for $\gamma-\xi\in\mathbb{Z}_{\geq 0}\Pi$. For a root $\beta
\in \Phi$, we write $\beta>0$ or $\beta<0$ to mean
$\beta\in\Phi^+$ or $\beta\in-\Phi^+$, respectively.

Let $\W$ denote the Weyl group of $\g$. Fix a $\W$-invariant positive-definite inner product
$\langle-,-\rangle$ on $\mathbb{R}\Phi$. For $\gamma\in\Phi$, let
$s_\gamma$ denote the reflection
\begin{equation}\label{eq:root-reflection}
s_\gamma(\lambda)
=
\lambda-\frac{2\langle\lambda,\gamma\rangle}
                 {\langle\gamma,\gamma\rangle}\gamma.
\end{equation}
For $\alpha\in \Pi$, the reflection $s_{\alpha}$ is referred to as a simple reflection. We write $\ell$ for the length function on $\W$ with respect to the simple reflections. A simple root $\alpha$ is a right descent of $w\in \W$ if
$\ell(ws_\alpha)<\ell(w)$, and it is a right ascent otherwise. The Bruhat order is denoted by $\leq$, where $u\leq w$ means that a reduced expression for $u$ occurs as a subword of a reduced expression for $w$. We say that $u<w$ is a cover if there does not exist $v$ with $u<v<w$.

For $\gamma \in \Phi^+$ set $\gamma^\vee=2\gamma/\langle\gamma,\gamma\rangle$ for the coroot associated to $\gamma$. Let
$\rho=\frac12\sum_{\gamma\in\Phi^+}\gamma$ be the half sum of the positive roots. The height of a root $\beta=\sum_{\alpha\in\Pi}a_\alpha\alpha$ is
$\operatorname{ht}(\beta)=\sum_\alpha a_\alpha$.

\subsection{Hermitian pairs}\label{sec:Herm} We refer to \cite{EHP}*{Sections 2--3} for background on Hermitian
pairs and their root combinatorics.

Fix a maximal parabolic subalgebra $\p=\m\oplus\mathfrak{u}$ containing
$\mathfrak b$, with Levi factor $\m\supseteq\mathfrak h$ and
nilradical $\mathfrak{u}$. Write $\Phi(\m)$ and $\Phi(\mathfrak{u})$ for their
$\mathfrak h$-roots, so that
\begin{equation}
\mf{m}=\mathfrak{h}\oplus \bigoplus_{\alpha \in \Phi(\m)} \g_{\alpha},\quad\quad \mf{u}=	\bigoplus_{\alpha \in \Phi(\mathfrak{u})}\g_{\alpha},\quad\quad \Phi=\Phi(\mathfrak{u})\sqcup \Phi(\m)\sqcup (-\Phi(\mathfrak{u})).
\end{equation}
We say that roots in $\Phi(\m)$ are compact, while roots in $\Phi\setminus \Phi(\m)$ are noncompact. Since $\p$ is maximal parabolic, there is one noncompact simple root, which we call $\widetilde{\alpha}$:
\begin{equation}
\Phi(\mathfrak{u})\cap \Pi=\{\widetilde{\alpha}\}.    
\end{equation}
We write $\Phi(\m)^+=\Phi(\m)\cap\Phi^+$ for the positive compact roots.

A pair $(\g,\mathfrak{m})$ is called a Hermitian pair if
$\mathfrak{u}$ is abelian. We assume throughout that this is the
case. The seven families are listed in the Introduction (see also \cite{EHP}). By \cite{EHP}*{Lemma 2.2}, abelianness of $\mathfrak{u}$ is equivalent to
$\widetilde{\alpha}$ having coefficient one in the largest root of $(\Phi,\leq)$.

Let $\W_{\m}$ be the Weyl group of $\m$, and let $\W^{\mathfrak{m}}$ denote the set of minimal length coset representatives of $\W_{\m}\backslash \W$. Then $\W=\W_{\m}\W^{\m}$ and each $w\in \W$ can be written uniquely as $w=vx$ with $v\in \W_{\m}$, $x\in \W^{\m}$, and $\ell(w)=\ell(v)+\ell(x)$. In this case, we write $\overline{w}=x$ for the projection from $\W$ to $\W^{\m}$. 

In the following statement, given $\beta\in \Phi$, we write $\tilde{c}(\beta)$ for the coefficient of $\widetilde{\alpha}$ in $\beta$.

\begin{lemma}\label{lem:am-root-conventions}
The following is true about $\Phi(\m)$ and $\Phi(\mathfrak{u})$.
\begin{enumerate}
\renewcommand{\labelenumi}{(\roman{enumi})}
\item The compact and noncompact roots are determined by $\tilde{c}(-)$:
\begin{equation}\label{eq:root-coefficients}
 \tilde{c}(\Phi)\subseteq\{-1,0,1\},\qquad
 \Phi(\m)=\{\beta\in\Phi:\tilde{c}(\beta)=0\},\qquad
 \Phi(\mathfrak{u})=\{\beta\in\Phi:\tilde{c}(\beta)=1\}.
\end{equation}
\item An element $x\in\W$ belongs to $\W^{\m}$ if and only if
$x^{-1}\Phi(\m)^+\subseteq\Phi^+$.
\item The group $\W_{\m}$ preserves $\tilde{c}$ and both sets
$\Phi(\mathfrak{u})$ and $-\Phi(\mathfrak{u})$.
\item For $x\in\W^{\m}$ and $\alpha\in\Pi$,
\[
 xs_\alpha\in\W^{\m}
 \quad\Longleftrightarrow\quad
 x\alpha\notin\Phi(\m).
\]
In this case $\ell(xs_\alpha)=\ell(x)+1$ if and only if
$x\alpha\in\Phi(\mathfrak{u})$.
\end{enumerate}
\end{lemma}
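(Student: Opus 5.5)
The plan is to prove the four parts in order. Everything rests on the structural fact recalled above: since $\mathfrak u$ is abelian, $\widetilde\alpha$ has coefficient one in the highest root $\theta$ \cite{EHP}*{Lemma 2.2}.

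For (i), every positive root $\beta$ satisfies $\beta\le\theta$. Since all simple-root coefficients of $\beta$ are nonnegative and bounded by those of $\theta$, we get $0\le\tilde c(\beta)\le 1$. Negating gives $\tilde c(\Phi)\subseteq\{-1,0,1\}$. The Levi $\m$ is spanned by $\mathfrak h$ and the root spaces of roots in $\mathbb Z(\Pi\setminus\{\widetilde\alpha\})$, so $\Phi(\m)=\{\tilde c=0\}$. The nilradical $\mathfrak u$ consists of the positive roots not in $\Phi(\m)$, which are exactly those with $\tilde c=1$.

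For (ii), I will use the standard characterization that $x$ is the minimal length element of its coset $\W_\m x$ if and only if $\ell(s_\alpha x)>\ell(x)$ for every $\alpha\in\Pi\setminus\{\widetilde\alpha\}$. This holds if and only if $x^{-1}\alpha>0$ for all such $\alpha$. Every positive compact root is a nonnegative combination of these simple roots, so this is equivalent to $x^{-1}\Phi(\m)^+\subseteq\Phi^+$. The converse direction is immediate.

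For (iii), $\W_\m$ is generated by the reflections $s_\alpha$ with $\alpha\in\Pi\setminus\{\widetilde\alpha\}$. Each such reflection changes a root only by a multiple of $\alpha$, so it preserves $\tilde c$. By (i), preserving $\tilde c$ means preserving each of $\Phi(\m)$, $\Phi(\mathfrak u)$ and $-\Phi(\mathfrak u)$.

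For (iv), take $x\in\W^\m$ and $\alpha\in\Pi$, and apply (ii) to $xs_\alpha$. We have $(xs_\alpha)^{-1}\beta=s_\alpha(x^{-1}\beta)$ for $\beta\in\Phi(\m)^+$. Since $x^{-1}\beta>0$ and $s_\alpha$ permutes $\Phi^+\setminus\{\alpha\}$, this is negative only if $x^{-1}\beta=\alpha$, that is, $\beta=x\alpha$.
\begin{itemize}
\item If $x\alpha\in\Phi(\m)$, then $x\alpha\in\Phi(\m)^+$: it cannot be negative, since otherwise $x^{-1}(-x\alpha)=-\alpha<0$ would contradict (ii). Taking $\beta=x\alpha$ then shows $xs_\alpha\notin\W^\m$.
\item If $x\alpha\notin\Phi(\m)$, no such $\beta$ exists, so $xs_\alpha\in\W^\m$.
\end{itemize}
For the length statement, recall $\ell(xs_\alpha)=\ell(x)+1$ if and only if $x\alpha>0$. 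When $x\alpha$ is noncompact, (i) says this means exactly $x\alpha\in\Phi(\mathfrak u)$.

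No step is a serious obstacle. The only point needing care is the sign argument in (iv), where we must rule out $x\alpha\in-\Phi(\m)^+$ using (ii) for $x$ itself.
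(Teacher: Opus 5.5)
Your proof is correct and follows the paper's argument essentially step for step. Part (i) uses the highest-root bound, (iii) uses generation of $\W_\m$ by the compact simple reflections, and (iv) uses the fact that $s_\alpha$ permutes $\Phi^+\setminus\{\alpha\}$ together with (ii) to exclude $x\alpha\in-\Phi(\m)^+$. The only difference is in (ii): you derive it from the left-descent characterization of minimal coset representatives, whereas the paper simply cites \cite{EHP}*{Remark 3.6(ii)}.
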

\begin{proof}
For \textup{(i)}, every positive root is bounded above by
the highest root, whose coefficient at $\widetilde{\alpha}$
is one. Thus $\tilde{c}(\Phi^+)\subseteq\{0,1\}$.
The standard parabolic root decomposition identifies
$\Phi(\m)$ with the roots having coefficient zero and
$\Phi(\mathfrak{u})$ with the positive roots having positive
coefficient. This proves \textup{(i)}.

Part \textup{(ii)} is \cite{EHP}*{Remark 3.6(ii)}. For \textup{(iii)}, if
$\alpha\in\Pi\setminus\{\widetilde{\alpha}\}$, then (i) gives
\[
\tilde{c}(s_\alpha\beta)
=\tilde{c}(\beta)-\langle\beta,\alpha^\vee\rangle \tilde{c}(\alpha)
=\tilde{c}(\beta).
\]
These reflections generate $\W_{\m}$, so the assertion
follows from \textup{(i)}.

For \textup{(iv)}, part \textup{(ii)} gives
$x^{-1}\Phi(\m)^+\subseteq\Phi^+$.
Since $s_\alpha$ sends every positive root other than
$\alpha$ to a positive root, we obtain
\[
xs_\alpha\in\W^{\m}
\quad\Longleftrightarrow\quad
\alpha\notin x^{-1}\Phi(\m)^+
\quad\Longleftrightarrow\quad
x\alpha\notin\Phi(\m)^+.
\]
Part \textup{(ii)} also excludes
$x\alpha\in-\Phi(\m)^+$, proving the equivalence.
Finally, $\ell(xs_\alpha)=\ell(x)+1$ if and only if
$x\alpha>0$ \cite{EHP}*{Lemma 3.9}. When $x\alpha$ is
noncompact, this is equivalent to $x\alpha\in\Phi(\mathfrak{u})$.
\end{proof} 

For $w\in\W$, write
\begin{equation}\label{eq:root-inversion-ideal}
 \Phi_w=\Phi^+\cap w(-\Phi^+).
\end{equation}
Then $|\Phi_w|=\ell(w)$ \cite{EHP}*{Section 3.1}.
We define the right weak Bruhat order $\leq_{\mathrm{weak}}$ on $\W$ to be the partial order generated by
$x<_{\mathrm{weak}}xs_\alpha$ for $\alpha\in\Pi$ with
$\ell(xs_\alpha)=\ell(x)+1$. We recall the following.

\begin{lemma}
\label{lem:root-bruhat-ideals}
\begin{enumerate}
\renewcommand{\labelenumi}{(\roman{enumi})}
\item \textup{\cite{EHP}*{Lemma 3.9}.}
For $w\in\W$ and $\alpha\in\Pi$, the reflection
$s_\alpha$ is a right ascent of $w$ if and only if $w\alpha>0$.
When $w\alpha>0$, one has
\begin{equation}\label{eq:root-cover-ideal}
 \Phi_{ws_\alpha}=\Phi_w\cup\{w\alpha\}.
\end{equation}
\item \textup{\cite{EHP}*{Proposition 3.10}.}
For $v,w\in\W$,
\[
 v\leq_{\mathrm{weak}}w
 \quad\Longleftrightarrow\quad\Phi_v\subseteq\Phi_w.
\]
\item \textup{\cite{EHP}*{Corollary 3.12}.}
On $\W^{\m}$, Bruhat order and right weak Bruhat order coincide.
\end{enumerate}
\end{lemma}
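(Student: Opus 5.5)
The statement collects three standard facts about inversion sets. Parts (i) and (ii) are general Coxeter-theoretic facts, and I would prove them directly from the definition \eqref{eq:root-inversion-ideal}. For part (iii) the Hermitian hypothesis is essential, and that is where the real work lies.

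For \textup{(i)}, I would compute $\Phi_{ws_\alpha}$ using the fact that $s_\alpha$ permutes $\Phi^+\setminus\{\alpha\}$ and sends $\alpha\mapsto-\alpha$. This gives $s_\alpha(-\Phi^+)=(-\Phi^+\setminus\{-\alpha\})\cup\{\alpha\}$, and hence
\[
ws_\alpha(-\Phi^+)=\bigl(w(-\Phi^+)\setminus\{-w\alpha\}\bigr)\cup\{w\alpha\}.
\]
Intersecting with $\Phi^+$, there are two cases. If $w\alpha>0$, then $-w\alpha\notin\Phi^+$ and we gain exactly $w\alpha$, which is \eqref{eq:root-cover-ideal}. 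If $w\alpha<0$, we lose exactly $-w\alpha$. Since $|\Phi_w|=\ell(w)$, this shows $\ell(ws_\alpha)=\ell(w)\pm1$ according to the sign of $w\alpha$, which is the ascent criterion.

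For \textup{(ii)}, the forward implication follows by induction along a chain of right weak covers using \eqref{eq:root-cover-ideal}. For the converse, assume $\Phi_v\subseteq\Phi_w$ and set $u=v^{-1}w$. I would show that $\beta\mapsto v\beta$ gives a bijection $\Phi_u\to\Phi_w\setminus\Phi_v$.
\begin{itemize}
\item If $\gamma\in\Phi_w\setminus\Phi_v$, then $\beta=v^{-1}\gamma>0$ and $u^{-1}\beta=w^{-1}\gamma<0$, so $\beta\in\Phi_u$.
\item Conversely, let $\beta\in\Phi_u$ and suppose $\gamma=v\beta<0$. Then $-\gamma\in\Phi_v\subseteq\Phi_w$, which forces $u^{-1}\beta=w^{-1}\gamma>0$, a contradiction. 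So $\gamma>0$, and then $\gamma\in\Phi_w\setminus\Phi_v$.
\end{itemize}
Hence $\ell(u)=\ell(w)-\ell(v)$. Appending a reduced word for $u$ to one for $v$ gives a reduced word for $w$, so by \textup{(i)} there is a chain of right weak covers from $v$ to $w$.

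For \textup{(iii)}, weak order always implies Bruhat order, so only the converse needs proof. I would argue by induction on $\ell(y)$ for $x\leq y$ in $\W^{\m}$.
\begin{itemize}
\item First pick a right descent $\alpha$ of $y$. Then $y\alpha<0$, and $y\alpha\notin-\Phi(\m)^+$, since otherwise $y^{-1}(-y\alpha)=-\alpha<0$ would contradict Lemma~\ref{lem:am-root-conventions}(ii). By Lemma~\ref{lem:am-root-conventions}(iv), $ys_\alpha\in\W^{\m}$.
\item By the lifting ($Z$-)property of Bruhat order: if $\alpha$ is not a descent of $x$, then $x\leq ys_\alpha$, and induction together with $ys_\alpha<_{\mathrm{weak}}y$ finishes.
\item If $\alpha$ is a descent of $x$, then $xs_\alpha\leq ys_\alpha$, with both elements in $\W^{\m}$ by the same argument. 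Induction gives $\Phi_{xs_\alpha}\subseteq\Phi_{ys_\alpha}$. By \textup{(i)} we have $\Phi_x=\Phi_{xs_\alpha}\cup\{-x\alpha\}$ and $\Phi_y=\Phi_{ys_\alpha}\cup\{-y\alpha\}$.
\end{itemize}
It therefore remains to show $-x\alpha\in\Phi_y$, which I expect to be the main obstacle. Here I would use abelianness of $\mathfrak u$. For $x\in\W^{\m}$ one has $\Phi_x\subseteq\Phi(\mathfrak u)$, and these sets are lower order ideals in the poset $(\Phi(\mathfrak u),\leq)$. The missing membership should then follow from a minuscule-type argument: Bruhat covers in $\W^{\m}$ have the form $x<s_\gamma x$ with $\gamma$ a minimal root in the complement of the ideal $\Phi_x$, since coefficients of $\widetilde{\alpha}$ are at most one. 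From this, Bruhat order on $\W^{\m}$ agrees with inclusion of the ideals $\Phi_x$, and by \textup{(ii)} inclusion of the $\Phi_x$ is exactly weak order.

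If this direct route stalls, I would fall back on citing Proctor's description of $\W^{\m}$ for minuscule $\p$ as the distributive lattice of order ideals of $\Phi(\mathfrak u)$.
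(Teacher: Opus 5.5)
\textbf{Parts (i) and (ii).} Your arguments are correct and complete. The paper gives no proof of this lemma and simply cites the literature for all three parts. In (iii), your reduction via the lifting property is also sound.

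\textbf{The gap in (iii).} The gap is the step you flag yourself. Showing $-x\alpha\in\Phi_y$ is the entire content of (iii), and you do not give an argument for it.
\begin{itemize}
\item Your assertion that a Bruhat cover in $\W^{\m}$ has the form $x<s_\gamma x$, with $\gamma$ minimal in $\Phi(\mathfrak u)\setminus\Phi_x$ (so that $\Phi_{s_\gamma x}=\Phi_x\sqcup\{\gamma\}$), restates ``every Bruhat cover in $\W^{\m}$ is a right weak cover''. That is what has to be proved.
\item The claim that the sets $\Phi_x$ are order ideals is likewise unproved.
\item The fact that $\widetilde\alpha$ has coefficient at most one is invoked but never used.
\item Even granting the statement about covers, passing to the whole order requires that Bruhat order on $\W^{\m}$ be graded by $\ell$, which you do not mention.
\end{itemize}
Falling back on Proctor is a citation, which is all the paper does, but it is not a proof.

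\textbf{How to close it.} The following route bypasses your induction.

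Take $\varpi\in\mathbb R\Phi$ with $\langle\varpi,\beta\rangle=\tilde c(\beta)$ for all $\beta\in\Phi$, as in the proof of Lemma~\ref{lem:am-common-descent}. For $x\in\W^{\m}$, Lemma~\ref{lem:am-root-conventions}(ii) gives $\Phi_x\subseteq\Phi(\mathfrak u)$. The map $\gamma\mapsto-x^{-1}\gamma$ then identifies $\Phi_x$ with $\{\beta>0\mid\tilde c(x\beta)=-1\}$.

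Two facts now combine:
\begin{itemize}
\item $\tilde c(x\beta)=\langle x^{-1}\varpi,\beta\rangle\in\{-1,0,1\}$; this is where abelianness enters.
\item Exactly $d_X$ positive roots $\beta$ have $\tilde c(x\beta)\neq0$. These are half of the $2d_X$ roots in $x^{-1}(\Phi(\mathfrak u)\sqcup-\Phi(\mathfrak u))$, a set stable under negation.
\end{itemize}
Summing $\tilde c(x\beta)$ over $\beta>0$ therefore gives
\[
\ell(x)=\frac{d_X}{2}-\langle x^{-1}\varpi,\rho\rangle\qquad(x\in\W^{\m}).
\]

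Now let $x<y$ in $\W^{\m}$ with $\ell(y)=\ell(x)+1$. A reduced word for $x$ is obtained by deleting one letter from a reduced word for $y$, so $y=xs_\beta$ for some $\beta>0$. Then $y^{-1}\varpi=s_\beta x^{-1}\varpi=x^{-1}\varpi-\tilde c(x\beta)\beta^\vee$, and hence
\[
1=\ell(y)-\ell(x)=\tilde c(x\beta)\,\langle\beta^\vee,\rho\rangle .
\]
Here $\langle\beta^\vee,\rho\rangle$ is a positive integer, namely the height of $\beta^\vee$ in the dual root system. So this forces $\langle\beta^\vee,\rho\rangle=1$, i.e.\ $\beta\in\Pi$, and $x<y$ is a right weak cover.

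Combine this with Deodhar's theorem that Bruhat order on a parabolic quotient is graded by length. Then any $x\leq y$ in $\W^{\m}$ is joined by a chain of length-one steps inside $\W^{\m}$, and (iii) follows. This also shows exactly where the hypothesis is used: without $\tilde c(\Phi)\subseteq\{-1,0,1\}$, $\ell$ is no longer an affine function of $x^{-1}\varpi$.
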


\noindent In particular, if $x,y\in\W^{\m}$ and
$\ell(y)=\ell(x)+1$, then
\[
x<y\quad\Longleftrightarrow\quad
y=xs_\alpha\text{ for some }\alpha\in\Pi.
\]

\subsection{Flag and Schubert varieties} Let $G$ be the simply-connected complex linear algebraic group with Lie algebra $\g$, with Borel and parabolic subgroups $B\subseteq P\subseteq G$ corresponding to $\mathfrak{b}\subseteq \mathfrak{p}\subseteq \g$.

Let $X=G/P$ be the corresponding flag variety, of dimension $d_X=\dim(\mathfrak{u})=|\Phi(\mathfrak{u})|$. It has the Bruhat decomposition 
\begin{equation}
X=\bigsqcup_{x\in \W^{\m}} Bw_0x^{-1}P/P,
\end{equation}
where $w_0$ is the longest element of $\W$. The sets $Bw_0x^{-1}P/P$ are the $B$-orbits on $X$, known as the Schubert cells. For $x\in \W^{\m}$ we set
\begin{equation}
O_x=Bw_0x^{-1}P/P,\quad \textnormal{and}\quad Z_x=\overline{O}_x.    
\end{equation}
Here, $Z_x$ is the Schubert variety associated to $x$. We have
\[
Z_y\subseteq Z_x\quad\Longleftrightarrow\quad x\leq y
\qquad(x,y\in\W^{\m}).
\]
The codimension of $Z_x$ is $\ell(x)$.

\subsection{Generalized Verma modules and localization}\label{sec:Verma} The following is a condensed version of \cite{schubertLC}*{Section 3.4}, along with a recollection of structure formulas over Hermitian pairs \cite{cisWeight}.

Let $U(\g)$ be the universal enveloping algebra of $\g$. For $x\in \W^{\m}$ we let $F_x$ denote the finite dimensional irreducible $\mathfrak{m}$-representation of highest weight $x\rho-\rho$. We view $F_x$ as a representation of $\p$, with $\mathfrak{u}$ acting trivially. We consider the parabolic Verma module $M_x$ and its dual
\begin{equation}\label{eq:standards-costandards}
M_x=U(\g)\otimes_{U(\p)}F_x,
\qquad
N_x=M_x^\vee.
\end{equation}
Here $(-)^\vee$ is the BGG dual (see \cite[Section 3.2]{HumphreysO}). The module $M_x$ has a unique simple head $L_x$, the simple module of highest weight $x\rho-\rho$, which is the simple socle of $N_x$.

The modules $M_x$, $N_x$, $L_x$ have geometric realizations, via the Beilinson--Bernstein localization theorem \cite{BB}. More precisely (see \cite{schubertLC}*{Section 3.4}),
\begin{equation}
N_x=\Gamma\big(X,\mathscr{H}^{\ell(x)}_{O_x}(\mathscr{O}_X)\big), \quad M_x=\Gamma\big(X,\mathbb{D}\mathscr{H}^{\ell(x)}_{O_x}(\mathscr{O}_X)\big),\quad L_x=\Gamma(X,\operatorname{IC}_{Z_x}),
\end{equation}
where $\mathbb{D}$ is the duality functor of $\mathscr{D}_X$-modules. As such, $N_x$ and $M_x$ are endowed with weight filtrations $W_{\bullet}$, via the global sections functor, if we endow $\mathscr{H}^{\ell(x)}_{O_x}(\mathscr{O}_X)$ with the Hodge structure $\mathscr{H}^{\ell(x)}_{O_x}(\mathscr{O}_X^H)$:
\begin{equation}\label{eq:geometric-costandard}
W_p(N_x)=   \Gamma\big(X,W_p(\mathscr{H}^{\ell(x)}_{O_x}(\mathscr{O}_X^H))\big),\quad  p\in \mathbb{Z}.
\end{equation}
We recall the weight filtration on $N_x$ in the case of Hermitian pairs. Following \cite{cisWeight}, we make the following definitions. Set
\begin{equation}\label{eq:root-incompatibility}
 \mathscr M=\{(\gamma,\nu) \mid \gamma,\nu\in\Phi^+,\;
 \langle\gamma,\nu\rangle\ne0
 \text{ or both roots are long}\},
\end{equation}
with the convention that, if only one root length occurs, all roots are short. We denote by $\mathscr{S}$ the set of all subsets $\Omega$ of $\Phi^+$ that satisfy the following two conditions:
\begin{enumerate}
\item [(a)]  If $\gamma$ and $\nu$ are both in $\Omega$ and $\gamma \neq \nu$, then $(\gamma,\nu)\notin \mathscr{M}$. 

\item [(b)] If $\gamma$ is in $\Omega$ and there is $\xi$ in $\Phi^+$ such that $\gamma \neq \xi$, $(\gamma, \xi)\in \mathscr{M}$, and $\xi \leq \gamma$, then there is $\zeta \in \Omega$ satisfying $\zeta \neq \gamma$, $(\zeta,\xi)\in \mathscr{M}$, and $\zeta\leq \gamma$.
\end{enumerate}

For $x\in \W^{\m}$ we define
\begin{equation}\label{eq:admissible-noncompact}
\mathscr{S}_x=\{ \Omega \in \mathscr{S} \mid x\Omega \subseteq \Phi(\mathfrak{u})\cup -\Phi(\mathfrak{u})\}.
\end{equation}
We write $\mathscr{E}_x$ for the set of $\Omega$ in
$\mathscr{S}_x$ from \eqref{eq:admissible-noncompact}
satisfying the additional condition:
\begin{enumerate}
\item [(c)] If $\gamma$ is in $\Omega$ then there is $\zeta$ in $\Omega$ with $\gamma \leq \zeta$ and $x\zeta \in \Phi(\mathfrak{u})$.   
\end{enumerate}
For any $\Omega\subseteq\Phi^+$ and $x\in\W$, set
\begin{equation}\label{eq:admissible-positive}
\Omega_x^+
=
\{\gamma\in\Omega\mid x\gamma\in\Phi(\mathfrak{u})\}.
\end{equation}
If $\Omega$ is pairwise orthogonal, define
\begin{equation}\label{eq:admissible-label}
r_{x,\Omega}
=
\prod_{\gamma\in\Omega_x^+}s_\gamma,
\qquad
t_x(\Omega)=\overline{x r_{x,\Omega}}.
\end{equation}
As the reflections $s_{\gamma}$ for $\gamma\in \Omega_x^+$ commute, the product $r_{x,\Omega}$ is independent of its
ordering.

By \cite{intertwining}*{Propositions 2.3 and 4.3}, the map
\begin{equation}\label{eq:admissible-factors}
\mathscr E_x\longrightarrow\W^{\m},
\qquad
\Omega\longmapsto t_x(\Omega),
\end{equation}
is injective, and its image indexes the composition factors of
$M_x$ and $N_x$, each occurring with multiplicity one. Furthermore, the size of $\Omega_x^+$ controls the weight level of a composition factor.

\begin{prop}\label{prop:am-weight-normalization}
For all $p\in \mathbb{Z}$ we have that 
\begin{equation}\label{eq:am-weight-layers}
\operatorname{Gr}^{W}_{d_X+\ell(x)+p}N_x
\cong
\bigoplus_{\substack{\Omega\in\mathscr E_x\\
                    |\Omega_x^+|=p}}
L_{t_x(\Omega)}.    
\end{equation}
\end{prop}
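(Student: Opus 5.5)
The plan is to combine three known inputs. The first is the Collingwood--Irving--Shelton description of the Loewy (radical/socle) filtration of $N_x$, or equivalently of $M_x$, for Hermitian pairs. The second is the fact that the weight filtration on $N_x$, defined via \eqref{eq:geometric-costandard}, is a Loewy filtration of a specific, normalized kind. The third is the bookkeeping needed to pin down the normalization, i.e.\ the lowest weight and the direction of the filtration.

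First, $\mathscr H^{\ell(x)}_{O_x}(\mathscr O_X^H)=j_*\mathscr O_{O_x}^H[\ldots]$ is the $*$-extension of a pure Hodge module of weight $\dim O_x=d_X-\ell(x)$ along the affine embedding $O_x\hookrightarrow X$. Local cohomology shifts weights up, so $\mathscr H^{\ell(x)}_{O_x}(\mathscr O_X^H)$ has weights $\geq d_X+\ell(x)$. Its lowest weight piece is the socle $\operatorname{IC}^H_{Z_x}$ (up to Tate twist), giving $\operatorname{Gr}^W_{d_X+\ell(x)}N_x=L_x$. This matches the $p=0$ term, since $\Omega=\varnothing$ is the unique element of $\mathscr E_x$ with $|\Omega_x^+|=0$ contributing $t_x(\varnothing)=x$. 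One should check that any $\Omega\in\mathscr E_x$ with $\Omega_x^+=\varnothing$ is empty, using condition (c).

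Second, I would invoke purity and parity. Each composition factor $L_{t}$ of $N_x$ underlies $\operatorname{IC}^H_{Z_t}$ twisted, and Kazhdan--Lusztig purity (the parabolic KL polynomials for Hermitian pairs are computed in \cite{cisWeight} and are pointwise pure) forces $\operatorname{Gr}^W$ to be semisimple. The weight of the factor $L_t$ in the $*$-extension is then governed by the degree of the inverse parabolic KL polynomial. Concretely, the weight filtration on $j_*$ of a pure object, when all KL stalks are pure, coincides with the socle filtration shifted by $d_X+\ell(x)$. This is the standard Beilinson--Ginzburg--Soergel type argument: the category is Koszul, and the weight grading agrees with the Koszul grading, which is rigid. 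Thus $\operatorname{Gr}^W_{d_X+\ell(x)+p}N_x=\operatorname{soc}^{p+1}N_x/\operatorname{soc}^{p}N_x$.

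Third, I would apply \cite{cisWeight}, or \cite{intertwining}*{Propositions 2.3, 4.3}, together with the Loewy length results there. These show that for Hermitian pairs $M_x$ is rigid and that $L_{t_x(\Omega)}$ lies in radical layer $|\Omega_x^+|$. Dualizing turns radical layers of $M_x$ into socle layers of $N_x$, which yields \eqref{eq:am-weight-layers}.

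The main obstacle is the second step: justifying that the geometric weight filtration equals the socle filtration, rather than merely having the same layers up to a shift per factor. I would argue it through weights of parabolic KL polynomials. The factor $L_t$ appears with weight $d_X+\ell(x)+\deg$, where $\deg$ is read off from the stalk of $\operatorname{IC}$ along $O_x$. The combinatorial identity needed is that this exponent equals $|\Omega_x^+|$ for $t=t_x(\Omega)$. For Hermitian pairs this comparison follows from Boe's and Enright--Shelton's formulas, which express the inverse KL polynomials as monomials $q^{|\Omega_x^+|}$.
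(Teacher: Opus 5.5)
Your proposal is correct and follows essentially the paper's route. Both arguments take the factors $L_{t_x(\Omega)}$ and their Loewy layer $|\Omega_x^+|$ from Collingwood--Irving--Shelton, identify that layer with the Hodge weight layer of $N_x$ via Kazhdan--Lusztig theory, and normalize so that $L_x$ sits in weight $d_X+\ell(x)$; the paper cites Irving's Corollary 7.1.3 and Kashiwara--Tanisaki's Theorem 1.4 for exactly the KL-polynomial comparison in your last paragraph, while your Koszul/BGS variant also works because a graded module with simple socle has grading filtration equal to its socle filtration.
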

\begin{proof}
The composition factors of $M_x$ (and hence $N_x$) are described in \cite[Theorem 1.3]{cisWeight}. The authors also show that the socle, radical, and \textit{$\ell$-adic weight} filtrations coincide and are the unique Loewy filtration \cite[Corollary 1.7]{cisWeight}.  The graded pieces of the Loewy filtration are interpreted in terms of inverse parabolic Kazhdan--Lusztig polynomials in \cite[Corollary 7.1.3]{irvingBook}.  In \cite[Theorem 1.4]{KT} it is shown that these polynomials also describe the weight filtration on $N_x$. We have normalized the shift on $W_{\bullet}$ so that $L_x$ lives in weight $p=d_X+\ell(x)$ of $N_x$, consistent with (\ref{eq:geometric-costandard}).
\end{proof}

Every polarizable pure Hodge module with underlying $\mathscr D_X$-module
$\operatorname{IC}_{Z_t}$ is a Tate twist of
$\operatorname{IC}_{Z_t}^H$ (see, for instance, \cite{PRdet}*{Section 2.3}).
Thus a summand with label $t$ in weight $p$ is
\[
\operatorname{IC}_{Z_t}^H
\left(\frac{d_X-\ell(t)-p}{2}\right).
\]
In particular, $d_X-\ell(t)-p$ is even whenever this summand occurs.

\subsection{Local cohomology and the Grothendieck--Cousin complex}
\label{subsec:am-geometric-conventions}

For $x\in\W^{\m}$, \cite{schubertLC}*{Theorem 3.1} gives a
Grothendieck--Cousin complex $\mathscr{GC}_x^{\bullet}$ of mixed Hodge modules with
\begin{equation}\label{eq:lc-cousin-terms}
\mathscr{GC}_x^q
=\bigoplus_{\substack{y\in\W^{\m}\\ x\leq y,\ \ell(y)=q}}
\mathscr{H}^{\ell(y)}_{O_y}(\mathscr{O}_X^H),
\qquad
H^q(\mathscr{GC}_x^\bullet)
\cong\mathscr H^q_{Z_x}(\mathscr O_X^H).
\end{equation}
Importantly, for $y,z\in \W^{\m}$ with $x\leq y$, $x\leq z$, and $\ell(z)=\ell(y)+1$, the map
\begin{equation}
\tilde{\delta}_{y,z}: \mathscr{H}^{\ell(y)}_{O_y}(\mathscr{O}_X^H) \to \mathscr{H}^{\ell(z)}_{O_z}(\mathscr{O}_X^H),   
\end{equation}
induced by the differential in $\mathscr{GC}_x^{\bullet}$, is nonzero if and only if $y\leq z$. By the results in Section \ref{sec:Herm}, this is further equivalent to $z=ys_{\alpha}$ for some $\alpha\in \Pi$ with $y\alpha\in \Phi(\mathfrak{u})$.

Taking global sections gives a complex $(GC^{\bullet}_{x},\partial)$ of $U(\g)$-modules with
\begin{equation}
GC_x^q
=\bigoplus_{\substack{y\in\W^{\m}\\ x\leq y,\ \ell(y)=q}} N_y,\quad \textnormal{and}\quad\partial|_{N_y}=\sum_{\substack{\alpha\in \Pi\,\; ys_{\alpha}\in \W^{\m}\\ \ell(ys_{\alpha})=\ell(y)+1}} \delta_{y,ys_{\alpha}},   
\end{equation}
where $\delta=\Gamma(X,\tilde{\delta})$. These maps are strict with respect to the weight filtration:
\begin{equation}\label{eq:lc-strictness}
\operatorname{Gr}^W_p H^q(GC^{\bullet}_{x})
\cong
H^q\!\left(
  \operatorname{Gr}^W_p GC_x^\bullet
\right).
\end{equation}
Furthermore, we have that $\operatorname{Hom}_{U(\g)}(N_y,N_{ys_{\alpha}})$ is one dimensional \cite{intertwining}*{Proposition 2.2}. Thus, the map $\delta_{y,ys_{\alpha}}$ is unique up to non-zero scalar.

\subsection{Translation functors}\label{subsec:am-translation}

Let $\mathcal O$ denote the BGG category of $\g$ \cite{HumphreysO}. For an integral
weight $\eta$, write $\mathcal O_\eta$ for the block with the
central character of highest weight $\eta$, and
$\operatorname{pr}_\eta$ for projection to this block from $\mc{O}$.
Let $\Delta(\eta)$ be the ordinary Verma module of highest
weight $\eta$, with simple quotient $L(\eta)$.
We use the dot action $w\cdot\eta=w(\eta+\rho)-\rho$ and
extend the notation $L_w=L(w\cdot0)$ to all $w\in\W$.

We consider the translation functors of \cite[Chapter 7]{HumphreysO}. Fix $\alpha\in\Pi$, put $s=s_\alpha$, and set
$\mu=-\omega_\alpha$, where $\omega_\alpha$ is the corresponding
fundamental weight. Then $\mu+\rho$ is dominant with stabilizer
$\{e,s\}$. Translation onto and out of the $s$-wall gives functors
\begin{equation}
T=T_0^\mu:\mathcal O_0\longrightarrow\mathcal O_\mu,
\qquad
T'=T_\mu^0:\mathcal O_\mu\longrightarrow\mathcal O_0.
\end{equation}
Explicitly, $T(V)=\operatorname{pr}_\mu(L(\omega_\alpha)^*\otimes V)$.
These functors are exact, and $T'$ is both a left and a right
adjoint of $T$ \cite{HumphreysO}*{Section 7.1 and Proposition 7.2}.
For every $w\in\W$, \cite{HumphreysO}*{Theorems 7.6 and 7.9} give
\begin{equation}\label{eq:am-wall-translation}
 T\Delta(w\cdot0)\cong\Delta(w\cdot\mu),\qquad
 TL_w\cong
 \begin{cases}
 L(w\cdot\mu),&ws<w,\\
 0,&ws>w.
 \end{cases}
\end{equation}

Let $\mathcal O_\eta^{\p}$ be the full subcategory of
$\p$-locally finite objects in $\mathcal O_\eta$, and let
$Q_\eta:\mathcal O_\eta\to\mathcal O_\eta^{\p}$ take the maximal
$\p$-locally finite quotient \cite[Chapter 9]{HumphreysO}. Then $Q_\eta$ is left adjoint to
the inclusion, and $Q_0\Delta(w\cdot0)=M_w$ for $w\in\W^{\m}$
\cite{HumphreysO}*{Section 9.3 and Theorem 9.4(c)}.
Both $T$ and $T'$ preserve $\p$-local finiteness
\cite{HumphreysO}*{Proposition 9.3(d)}. Thus $TQ_0$ and
$Q_\mu T$ are left adjoint to $T':\mathcal O_\mu^{\p}\to\mathcal O_0$.
Uniqueness of left adjoints and \eqref{eq:am-wall-translation} give
\begin{equation}\label{eq:am-parabolic-translation}
 TQ_0\cong Q_\mu T,\qquad
 TM_w\cong Q_\mu\Delta(w\cdot\mu)
 \quad(w\in\W^{\m}).
\end{equation}
We will use these facts to prove Theorem \ref{thm:adjacentmaps} on maps of parabolic Verma modules.

\section{Adjacent standard maps}\label{sec:adjacent-proof}

The goal of this section is to determine the images of the differentials in the Grothendieck--Cousin complexes (see Section \ref{subsec:am-geometric-conventions}). The main result is Theorem \ref{thm:adjacentmaps}, which describes images of maps of dual Verma modules in the case of a right ascent.

\subsection{Images of adjacent maps}

Let $x\in\W^{\m}$ and $\alpha\in\Pi$ satisfy
\[
y=xs_\alpha\in\W^{\m},
\qquad
\ell(y)=\ell(x)+1.
\]
By \cite{intertwining}*{Proposition 2.2}, the space
$\operatorname{Hom}_{U(\g)}(N_x,N_y)$ is one-dimensional. Consider a (unique up to nonzero scalar) nonzero map
\begin{equation}\label{eq:adjacent-dual-map}
\delta_{x,y}:N_x\longrightarrow N_y.
\end{equation}
By the discussion of Section \ref{subsec:am-geometric-conventions}, this map is strict with respect to the weight filtration:
\[
\delta_{x,y}(W_pN_x)
=
\operatorname{im}(\delta_{x,y})\cap W_pN_y
\qquad(p\in\mathbb Z).
\]
Using notation as in Section \ref{sec:Verma}, we describe the image of $\delta_{x,y}$.

\begin{theorem}\label{thm:adjacentmaps}
Let $x\in\W^{\m}$ and $\alpha\in\Pi$ satisfy $y=xs_\alpha\in\W^{\m}$ and $\ell(y)=\ell(x)+1$. For every $p\in\mathbb Z$, there is an isomorphism of
$U(\g)$-modules
\begin{equation}\label{eq:adjacent-weight-image}
\operatorname{im}\!\left(
  \operatorname{Gr}^{W}_{p}\delta_{x,y}
\right)
\cong
\bigoplus_{\substack{
  \Omega\in\mathscr E_x\\
  d_X+\ell(x)+|\Omega_x^+|=p\\
  \alpha\in\Omega
}}
L_{t_x(\Omega)}.
\end{equation}
\end{theorem}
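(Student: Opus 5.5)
The plan is to identify the image of $\delta_{x,y}$ as a quotient of $N_x$ by its largest submodule killed by the map. I will show that the composition factors of $N_x$ that survive are exactly those $L_{t_x(\Omega)}$ with $\alpha\in\Omega$. Strictness with respect to $W_\bullet$ then reduces the graded statement to a statement about composition factors. The image $I=\operatorname{im}\delta_{x,y}$ inherits from $N_x$ the weight filtration $W_pI=\delta_{x,y}(W_pN_x)$, and $\operatorname{Gr}^W_p I$ is a quotient of $\operatorname{Gr}^W_pN_x$. By Proposition~\ref{prop:am-weight-normalization}, $\operatorname{Gr}^W_pN_x$ is semisimple and multiplicity free, with summands $L_{t_x(\Omega)}$ for $\Omega\in\mathscr E_x$ and $d_X+\ell(x)+|\Omega_x^+|=p$. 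Hence $\operatorname{im}(\operatorname{Gr}^W_p\delta_{x,y})\cong\operatorname{Gr}^W_pI$ is the direct sum of those summands that are composition factors of $I$. It therefore suffices to prove that $[I]=\sum_{\Omega\in\mathscr E_x,\ \alpha\in\Omega}[L_{t_x(\Omega)}]$ in the Grothendieck group.

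To compute $[I]$, write $s=s_\alpha$ and use the translation functors $T,T'$ through the $s$-wall. Note $xs=y>x$. The composite $\Theta=T'T$ is the wall-crossing functor. From \eqref{eq:am-parabolic-translation}, $TM_x\cong Q_\mu\Delta(x\cdot\mu)\cong TM_y$, since $x\cdot\mu=y\cdot\mu$. Applying BGG duality, and using that $T$ commutes with duality, gives $TN_x\cong TN_y$. The standard theory then gives short exact sequences $0\to N_x\to\Theta N_x\to N_y\to 0$, or the dual version. Here $\delta_{x,y}$ is, up to scalar, the composite $N_x\to\Theta N_x\to N_y$ obtained from the unit and counit of the adjunction between $T$ and $T'$. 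Because $\operatorname{Hom}(N_x,N_y)$ is one dimensional, the kernel of $\delta_{x,y}$ can be identified through this factorization. By \eqref{eq:am-wall-translation}, composition factors $L_w$ with $ws>w$ die under $T$, while those with $ws<w$ survive. I therefore expect the kernel of $\delta_{x,y}$ to be the largest submodule of $N_x$ all of whose composition factors $L_w$ satisfy $ws>w$ (the $s$-free part). Dually, the image should be the smallest quotient of $N_x$ containing all factors with $ws<w$ together with whatever lies above them in the Loewy structure.

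The combinatorial heart is the claim that, for $\Omega\in\mathscr E_x$, the factor $L_{t_x(\Omega)}$ survives in the image if and only if $\alpha\in\Omega$. I will check two things. First, if $\alpha\in\Omega$, then $\alpha\in\Omega_x^+$ because $x\alpha\in\Phi(\mathfrak u)$. Orthogonality of $\Omega$ then gives $t_x(\Omega)s<t_x(\Omega)$, so that $T$ does not kill this factor. Second, if $\alpha\notin\Omega$, then either $t_x(\Omega)$ has $s$ as an ascent, or the factor lies in the socle part annihilated by the map. For the second case I will use the recursion relating $\mathscr E_x$ and $\mathscr E_y$: each $\Omega\in\mathscr E_x$ with $\alpha\notin\Omega$ should give a composition factor of $N_y$ lying below the image of $\delta_{x,y}$. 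Counting composition factors in $0\to N_x\to\Theta N_x\to N_y\to0$ then pins down $[I]$. I also need to verify that the set of factors with $\alpha\in\Omega$ is closed upward in the Loewy order of $N_x$. This follows from the radical equals socle filtration together with the characterization of admissibility in condition (b).

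I expect the main obstacle to be this last matching between $\mathscr E_x$ and the ascent and descent behavior of $t_x(\Omega)$ under $s$. In particular, it is delicate to show that factors with $\alpha\notin\Omega$ but $t_x(\Omega)s<t_x(\Omega)$ still lie in the kernel. My first attempt will be to build an explicit bijection $\Omega\mapsto\Omega\cup\{\alpha\}$, or $\Omega\mapsto s_\alpha\Omega$, between the relevant pieces of $\mathscr E_x$ and $\mathscr E_y$. I will then compare multiplicities in $\Theta N_x$ using the Kazhdan--Lusztig combinatorics encoded in Proposition~\ref{prop:am-weight-normalization}.
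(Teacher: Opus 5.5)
There is a genuine gap. Your strictness reduction is fine. It suffices to determine the composition factors of $I=\operatorname{im}\delta_{x,y}$, because $\operatorname{Gr}^W_pN_x$ is semisimple and multiplicity free.

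The translation step is set up incorrectly, though. The wall-crossing sequence runs $0\to N_y\to\Theta N_x\to N_x\to 0$, not the other way. Also, $\delta_{x,y}$ is not the composite of unit and counit. Already for $\mathfrak{sl}_2$ with $\mathfrak p=\mathfrak b$, $\Theta N_e$ is the projective-injective module with Loewy layers $L_s,L_e,L_s$. The unit $N_e\to\Theta N_e$ has image equal to its socle, and the counit $\Theta N_s\to N_s=L_s$ kills the radical, so the composite is zero.

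Your expectation that $\ker\delta_{x,y}$ is the largest $T$-annihilated submodule of $N_x$ is nevertheless correct, but it needs the paper's argument:
\begin{itemize}
\item $I\neq 0$ contains the simple socle $L_y$ of $N_y$, and $TL_y\neq 0$ since $ys=x<y$, so $T\delta_{x,y}\neq0$.
\item $TN_x\cong TN_y$ is dual to a nonzero quotient of $\Delta(x\cdot\mu)$, so its endomorphisms are scalars.
\item Hence $T\delta_{x,y}$ is an isomorphism and $T(\ker\delta_{x,y})=0$.
\end{itemize}
With your correct observation that $\alpha\in\Omega$ forces $t_x(\Omega)s<t_x(\Omega)$, this puts every factor with $\alpha\in\Omega$ into $I$.

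The missing part is the converse: showing that factors with $\alpha\notin\Omega$ are not in $I$. Knowing that the kernel is the largest $T$-annihilated submodule does not say which $T$-annihilated factors of $N_x$ lie in it. Such a factor could sit above a surviving one and pass to $I$. None of your proposed tools settles this.
\begin{itemize}
\item Loewy-layer data cannot separate factors in the same layer. In the quadric case of Section~\ref{sec:quadrics}, the middle layer of $N_{x_{c-1}}$ has one factor in the kernel and one in the image.
\item The identity $[\Theta N_x]=[N_x]+[N_y]$ says nothing about $[I]$.
\item Your mechanism is also wrong as stated. For $\alpha\notin\Omega$, $L_{t_x(\Omega)}$ is not a composition factor of $N_y$ at all, so it cannot lie ``below the image.''
\item The case you flag as delicate, $\alpha\notin\Omega$ with $t_x(\Omega)s<t_x(\Omega)$, is vacuous. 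It must be, since otherwise your own kernel description would place that factor in $I$.
\end{itemize}

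The missing idea is to use that $I$ is a submodule of $N_y$. Every factor of $I$ is then a common factor of $N_x$ and $N_y$. You then need the key combinatorial input, Lemma~\ref{lem:am-common-descent}: every common factor $L_t$ satisfies $t\alpha\in-\Phi(\mathfrak u)$. The paper proves this in four steps:
\begin{enumerate}
\item Write $x=x_J$ and $y=x_K$ with $J,K\subseteq\mathscr R_t$.
\item Compare $x^{-1}\varpi$ with $y^{-1}\varpi$ to get $\alpha^\vee=\sum_{J\setminus K}\gamma^\vee-\sum_{K\setminus J}\gamma^\vee$, which forces $J\setminus K=\{\alpha\}$.
\item Hence $\alpha\in\mathscr R_t$.
\item Then $x\alpha\in\Phi(\mathfrak u)$ gives $\alpha\in\Omega_x^+$.
\end{enumerate}
So the common factors are exactly the $L_{t_x(\Omega)}$ with $\alpha\in\Omega$, and all of them survive $T$. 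Combined with $T(\ker\delta_{x,y})=0$, this gives $[I]$. This is precisely where the Hermitian hypothesis is used, as the remark after the proof notes, and your plan has no substitute for it.
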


\noindent In the process of the proof, we will show that this is a ``maximal rank" statement, in the sense that
\begin{equation}\label{eq:maxRank}
[\operatorname{im}(\delta_{x,y}): L_t]=[N_x:L_t][N_y:L_t],\quad \textnormal{for $t\in \W^{\m}$},   
\end{equation}
recalling that $N_x$ and $N_y$ have multiplicity-free composition series.

\subsection{Common factors of standard modules} In this subsection, we take steps towards determining $\operatorname{im}(\delta_{x,y})$ by describing the common composition factors of $N_x$ and $N_y$. The main result is:

\begin{prop}\label{prop:am-common}
Let $x\in\W^{\m}$ and $\alpha\in\Pi$ satisfy
$y=xs_\alpha\in\W^{\m}$ and $\ell(y)=\ell(x)+1$.
For $\Omega\in\mathscr E_x$:
\begin{equation}\label{eq:am-common}
[N_y:L_{t_x(\Omega)}]=1\quad\Longleftrightarrow\quad\alpha\in\Omega.
\end{equation}
When these conditions hold
\begin{equation}
[\operatorname{Gr}^{W}_{d_X+\ell(x)+|\Omega_x^+|}N_y:L_{t_x(\Omega)}]=1.    
\end{equation}
\end{prop}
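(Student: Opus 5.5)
We prove the proposition by comparing composition factors of $N_x$ and $N_y$ through translation to the $s_\alpha$-wall, using the functors $T$, $T'$ of Section~\ref{subsec:am-translation} with $s=s_\alpha$. Since $y=xs_\alpha>x$ and both lie in $\W^{\m}$, Section~\ref{subsec:am-translation} gives $TM_x\cong Q_\mu\Delta(x\cdot\mu)=Q_\mu\Delta(y\cdot\mu)\cong TM_y$, because $x\cdot\mu=y\cdot\mu$. Applying BGG duality, which commutes with translation, we get $TN_x\cong TN_y$. The same rule shows that $TL_t\neq0$ exactly when $ts_\alpha<t$, and that $T$ identifies $L_t$ and $L_{ts_\alpha}$.

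The key steps are as follows.

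\textbf{Step 1 (factors killed by $T$).} Exactness of $T$ shows that, for every $t$ with $ts_\alpha<t$, the multiplicity of $L(t\cdot\mu)$ in $TN_x$ equals
\[
[N_x:L_t]+[N_x:L_{ts_\alpha}]\ \text{(when $ts_\alpha\in\W^{\m}$)},
\]
and similarly for $N_y$. Equating the two sides gives one linear relation for each pair $\{t,ts_\alpha\}$. This alone does not determine the common factors, so we combine it with the explicit description of the factors through $\mathscr E_x$ and $\mathscr E_y$.

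\textbf{Step 2 (combinatorics).} We show the following two implications.
\begin{enumerate}
\item[(a)] If $\Omega\in\mathscr E_x$ and $\alpha\in\Omega$, then $\Omega':=s_\alpha\Omega$ lies in $\mathscr E_y$ and satisfies $t_y(\Omega')=t_x(\Omega)$.
\item[(b)] Conversely, every $\Omega'\in\mathscr E_y$ with $t_y(\Omega')$ a factor of $N_x$ arises in this way.
\end{enumerate}
For (a) we use $x\alpha=-y\alpha$ with $x\alpha\in\Phi(\mathfrak u)$, so $\alpha\in\Omega_x^+$. Reflecting by $s_\alpha$ permutes $\Phi^+\setminus\{\alpha\}$. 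Elements of $\Omega$ orthogonal to $\alpha$ are fixed, and $\alpha$ itself is sent to $-\alpha$; we therefore correct this by keeping $\alpha$ but swapping its sign role, so that $\alpha\in\Omega'$ with $y\alpha\in-\Phi(\mathfrak u)$. The weight statistic then satisfies $|\Omega'^+_y|=|\Omega_x^+|-1$. Combined with $\ell(y)=\ell(x)+1$, this gives $d_X+\ell(y)+|\Omega'^+_y|=d_X+\ell(x)+|\Omega_x^+|$, which is exactly the weight claim via Proposition~\ref{prop:am-weight-normalization}. The label also agrees:
\[
y\, r_{y,\Omega'}=x s_\alpha \prod_{\gamma\in\Omega_x^+\setminus\{\alpha\}}s_\gamma = x\, r_{x,\Omega},
\]
because the reflections $s_\gamma$ commute by pairwise orthogonality. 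More precisely, one should take $\Omega'=\Omega$ itself rather than $s_\alpha\Omega$, since orthogonality to $\alpha$ means these roots are fixed anyway. It remains to verify conditions (b) and (c) for $\Omega$ relative to $y$: condition (c) for $y$ can fail only for roots whose witness was $\alpha$, and we would check it using the admissibility condition (b).

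\textbf{Step 3 (converse).} If $\alpha\notin\Omega$, we aim to show that $L_{t_x(\Omega)}$ is not a factor of $N_y$, using the injectivity of $\Omega\mapsto t_y(\Omega)$. Suppose $t_x(\Omega)=t_y(\Omega')$. Then Step 1's multiplicity relation, together with the Loewy-length parity (the weights must agree modulo $2$ by the Tate-twist remark), should force $\alpha\in\Omega'$. Step 2 applied in reverse would then give $\alpha\in\Omega$, a contradiction.

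The main obstacle is this converse: excluding coincidences $t_x(\Omega)=t_y(\Omega')$ when $\alpha\notin\Omega$. I would first attempt it through the translation identity $TN_x\cong TN_y$ combined with the descent behaviour of $t_x(\Omega)$ under $s_\alpha$. If that fails, the fallback is a case analysis on whether $\alpha$ is orthogonal to $\Omega$ or to $\Omega_x^+$.
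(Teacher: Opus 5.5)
Your proposal does not prove the harder direction, $[N_y:L_{t_x(\Omega)}]=1\Rightarrow\alpha\in\Omega$, and the tools you suggest for it cannot supply it.

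\textbf{Step~1 is miscounted.} If $ts_\alpha<t$, then $(ts_\alpha)s_\alpha=t>ts_\alpha$, so $TL_{ts_\alpha}=0$ by \eqref{eq:am-wall-translation}; $T$ does not identify $L_t$ with $L_{ts_\alpha}$. The correct count is $[TN_x:L(t\cdot\mu)]=[N_x:L_t]$. Hence $TN_x\cong TN_y$ gives $[N_x:L_t]=[N_y:L_t]$ whenever $ts_\alpha<t$. It gives nothing for $ts_\alpha>t$, because $T$ kills those $L_t$.

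\textbf{Step~3 does not close the gap.} The parity remark is automatic: both weights are $\equiv d_X-\ell(t)\pmod 2$, so it carries no information. The intermediate claim that $t_x(\Omega)=t_y(\Omega')$ forces $\alpha\in\Omega'$ is false. For example, $\Omega=\{\alpha\}$ lies in $\mathscr E_x$ with $t_x(\Omega)=y$, while $L_y$ corresponds in $N_y$ to $\Omega'=\varnothing$.

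\textbf{The missing idea.} You need a Hermitian-specific input showing that every common factor $L_t$ of $N_x$ and $N_y$ satisfies $t\alpha\in-\Phi(\mathfrak u)$. The Remark following the proof of Theorem~\ref{thm:adjacentmaps} notes that translation can annihilate common factors for general parabolics, so this is not a formal consequence of $TN_x\cong TN_y$. The paper gets it in two steps:
\begin{enumerate}
\item Lemma~\ref{lem:am-column} shows that the $N$'s containing $L_t$ are exactly the $N_{x_J}$ with $J\subseteq\mathscr R_t$, a pairwise orthogonal set.
\item Lemma~\ref{lem:am-common-descent} writes $x=x_J$, $y=x_K$ and pairs with $\varpi$, where $\langle\varpi,\beta\rangle=\tilde c(\beta)$. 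This gives $\alpha^\vee=\sum_{\gamma\in J\setminus K}\gamma^\vee-\sum_{\gamma\in K\setminus J}\gamma^\vee$, and a length comparison forces $J=K\sqcup\{\alpha\}$.
\end{enumerate}
Thus $\alpha\in\mathscr R_t$. Since $x\alpha\in\Phi(\mathfrak u)$, \eqref{eq:am-column-signs} puts $\alpha$ in $J=\Omega_x^+\subseteq\Omega$.

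\textbf{The easier direction also needs repair.} Taking $\Omega'=\Omega$ fails when $\alpha$ is maximal in $\Omega$. Condition~(c) for $y$ then fails at $\gamma=\alpha$ itself: the only $\zeta\in\Omega$ with $\alpha\leq\zeta$ is $\alpha$, and $y\alpha=-x\alpha\in-\Phi(\mathfrak u)$. No appeal to condition~(b) can fix this; again $\{\alpha\}\notin\mathscr E_y$. The right choice is $\Omega\setminus\{\alpha\}$ when $\alpha$ is maximal in $\Omega$, and $\Omega$ otherwise. This is $D_\alpha\Omega$ of Lemma~\ref{lem:lc-deletion}, equivalently the paper's $\Omega_{J\setminus\{\alpha\}}$ with $x_{J\setminus\{\alpha\}}=y$.

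With that choice, the rest of your Step~2(a) goes through: the checks of (a)--(c), the label identity $y\,r_{y,\Omega'}=x\,r_{x,\Omega}$, and the count $|\Omega'^+_y|=|\Omega_x^+|-1$. Together these give the ``$\Leftarrow$'' direction and the weight statement. Alternatively, the corrected Step~1 plus $t\alpha=-v^{-1}x\alpha\in-\Phi(\mathfrak u)$ for $\alpha\in\Omega_x^+$ gives the multiplicity part of ``$\Leftarrow$''. You still need the explicit $\Omega'$ to pin down the weight.
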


\noindent Thus, in this setting, $L_{t_x(\Omega)}$ appears in the same weight level of $N_x$ and $N_y$. We begin with a lemma.

\begin{lemma}\label{lem:comp}
Let $t\in\W^{\m}$. If $\beta,\gamma\in \Phi^+$ satisfy $\beta\neq \gamma$, $t\beta,t\gamma\in-\Phi(\mathfrak{u})$, and
$(\beta,\gamma)\in\mathscr M$, then $\beta$ and $\gamma$ are
comparable in the root order, i.e. $\beta<\gamma$ or $\gamma<\beta$. 
\end{lemma}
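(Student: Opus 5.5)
The plan is to work with the images $t\beta,t\gamma\in-\Phi(\mathfrak u)$ and to show that $t\beta-t\gamma$ is either a root or twice a root. Since $t^{-1}$ is linear and preserves $\Phi$, the difference $\beta-\gamma=t^{-1}(t\beta-t\gamma)$ is then a positive or negative multiple of a root, and hence lies in $\mathbb Z_{\geq0}\Pi$ or $-\mathbb Z_{\geq0}\Pi$. That gives $\gamma<\beta$ or $\beta<\gamma$ in the root order. Note that $\beta\neq\pm\gamma$, because both roots are positive and distinct, and $\mathbb R\beta\cap\Phi=\{\pm\beta\}$ in the reduced root system $\Phi$.

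I split according to the definition of $\mathscr M$.

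\emph{Case $\langle\beta,\gamma\rangle>0$.} This is the standard fact that $\beta-\gamma\in\Phi$ for non-proportional roots with positive inner product, and we are done.

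\emph{Case $\langle\beta,\gamma\rangle<0$.} Then $\beta+\gamma\in\Phi$, so $t\beta+t\gamma=t(\beta+\gamma)\in\Phi$. However, $\tilde c(t\beta+t\gamma)=-2$, which contradicts Lemma~\ref{lem:am-root-conventions}(i). So this case cannot occur.

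\emph{Case $\langle\beta,\gamma\rangle=0$ with both roots long.} By the stated convention, this only happens when two root lengths occur. Among the Hermitian pairs, this means $(\mathsf B_n,\mathsf B_{n-1})$ or $(\mathsf C_n,\mathsf A_{n-1})$. I will check these directly in the standard coordinates, where $u:=t\beta$ and $v:=t\gamma$ are orthogonal long roots in $-\Phi(\mathfrak u)$; both $t\beta,t\gamma$ are long because $t$ preserves lengths.
\begin{itemize}
\item In type $\mathsf C_n$ with $\Phi(\mathfrak u)=\{e_i+e_j\}\cup\{2e_i\}$, the long roots of $-\Phi(\mathfrak u)$ are $-2e_i$. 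So $u=-2e_i$ and $v=-2e_j$ with $i\neq j$, and $(u-v)/2=e_j-e_i$ is a root.
\item In type $\mathsf B_n$ with $\Phi(\mathfrak u)=\{e_1\pm e_j\}\cup\{e_1\}$, the long roots of $-\Phi(\mathfrak u)$ are $-e_1\pm e_j$. Such a root has inner product $1+(\pm1)(\pm1)$ with another one, so orthogonality forces $u=-e_1+e_j$ and $v=-e_1-e_j$ (or the reverse). Then $(u-v)/2=\pm e_j$ is a root.
\end{itemize}
In both types $t\beta-t\gamma$ is twice a root, which completes the argument.

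The only real obstacle is this last case, where neither $\beta+\gamma$ nor $\beta-\gamma$ is a root. There the abelian (grading) structure of $\Phi(\mathfrak u)$ must be used to exclude an $\mathsf A_1\times\mathsf A_1$ configuration; in type $\mathsf B_n$, for instance, a pair like $e_1+e_2,e_3+e_4$ would give no such relation. A uniform alternative is to observe that two orthogonal long roots with the same $\tilde c$-value span a $\mathsf B_2$ subsystem. However, the explicit check above in the two doubly-laced Hermitian types is shorter, and that is what I would write.
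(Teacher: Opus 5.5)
Your proposal is correct and follows the paper's proof essentially step for step: the same three-way split on the sign of $\langle\beta,\gamma\rangle$, the same $\tilde c=-2$ contradiction when the inner product is negative, and the same coordinate check in types $\mathsf B_n$ and $\mathsf C_n$ showing that $t\beta-t\gamma$ is twice a root in the orthogonal long case. The differences are cosmetic. You work with $t\beta,t\gamma\in-\Phi(\mathfrak u)$ rather than their negatives. Your type $\mathsf B_n$ inner-product remark should say that the value $1+(\pm1)(\pm1)$ occurs only for a common index $j$; for distinct indices the inner product is $1$, so your conclusion is unaffected.
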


\begin{proof}
We split the argument into three cases, depending on whether
$\langle\beta,\gamma\rangle$ is negative, positive, or zero. If $\langle\beta,\gamma\rangle<0$ then
$\beta+\gamma$ is a root, by the root-string property. But $t(\beta+\gamma)\in\Phi$ would satisfy
\[
 \tilde{c}(t(\beta+\gamma))=\tilde{c}(t\beta)+\tilde{c}(t\gamma)=-2,
\]
contradicting \eqref{eq:root-coefficients}. Thus this case cannot occur. If $\langle\beta,\gamma\rangle>0$, the root-string property gives
$\beta-\gamma\in\Phi$. If this difference is positive then
$\gamma<\beta$; if it is negative then $\beta<\gamma$.

Finally, suppose $\langle\beta,\gamma\rangle=0$.
By \eqref{eq:root-incompatibility}, the hypothesis
$(\beta,\gamma)\in\mathscr M$ forces both roots to be long.
Under our convention that all roots in a simply laced system are
short, this leaves only types $\mathsf B$ and $\mathsf C$.
For these two Hermitian pairs, use standard orthonormal
coordinates $\epsilon_i$. Put $u=-t\beta$ and
$v=-t\gamma$, which by hypothesis are orthogonal long roots in $\Phi(\mathfrak{u})$.
In type $\mathsf{B}$, the only such pair is
$\{\epsilon_1-\epsilon_j,\epsilon_1+\epsilon_j\}$ for a common
$j>1$, whose difference is $\pm2\epsilon_j$.
In type $\mathsf{C}$, the pair must be $\{2\epsilon_i,2\epsilon_j\}$ with $i\ne j$,
whose difference is $2(\epsilon_i-\epsilon_j)$.
Thus in either case $u-v=2\eta$ for some root $\eta$, and $\beta-\gamma=-2t^{-1}\eta$.
Since $t^{-1}\eta$ is a root, its simple-root coefficients are
either all nonnegative or all nonpositive. Therefore, the same is true of
$\beta-\gamma$, proving the desired comparability.
\end{proof}

For $t\in\W^{\m}$, we write
\begin{equation}\label{eq:negative-admissible-sets}
 \mathscr S_t^-=
 \{\Psi\in\mathscr S \mid t\Psi\subset-\Phi(\mathfrak{u})\}.
\end{equation}
We first describe the sets in \eqref{eq:negative-admissible-sets}.
This will allow us to determine all common composition factors of
two adjacent standards. We refer freely to conditions (a) and (b) of Section \ref{sec:Verma}.

\begin{lemma}\label{lem:am-negative-union}
The union of any two members of $\mathscr S_t^-$ is a member of
$\mathscr S_t^-$. Consequently this collection has a greatest member,
denoted $\mathscr{R}_t$.
\end{lemma}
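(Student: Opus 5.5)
Take $\Psi_1,\Psi_2\in\mathscr S_t^-$ and set $\Psi=\Psi_1\cup\Psi_2$. The inclusion $t\Psi\subseteq-\Phi(\mathfrak u)$ holds automatically, so we only need to check conditions (a) and (b) for $\Psi$. The greatest member is then obtained formally. The collection is finite, and it is nonempty since $\varnothing\in\mathscr S_t^-$ (both conditions hold vacuously). Closure under pairwise unions therefore lets us define $\mathscr R_t$ as the union of all members, and this union lies in $\mathscr S_t^-$ and contains every member.

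Condition (b) is immediate. If $\gamma\in\Psi$, say $\gamma\in\Psi_i$, and $\xi\in\Phi^+$ satisfies $\xi\neq\gamma$, $(\gamma,\xi)\in\mathscr M$ and $\xi\le\gamma$, then condition (b) for $\Psi_i$ produces $\zeta\in\Psi_i\subseteq\Psi$ with the required properties. So (b) is inherited by unions, and the whole content lies in (a).

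For (a), suppose for contradiction that there are $\beta\neq\gamma$ in $\Psi$ with $(\beta,\gamma)\in\mathscr M$. Since $\Psi_1,\Psi_2$ each satisfy (a), we may assume $\beta\in\Psi_1\setminus\Psi_2$ and $\gamma\in\Psi_2\setminus\Psi_1$. Both $t\beta$ and $t\gamma$ lie in $-\Phi(\mathfrak u)$, so Lemma~\ref{lem:comp} shows that $\beta$ and $\gamma$ are comparable; by symmetry take $\gamma<\beta$. Apply (b) to $\beta\in\Psi_1$ with $\xi=\gamma$, noting that $\mathscr M$ is symmetric. This gives $\zeta_1\in\Psi_1$ with $\zeta_1\neq\beta$, $(\zeta_1,\gamma)\in\mathscr M$ and $\zeta_1\le\beta$. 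Since $\gamma\notin\Psi_1$, we have $\zeta_1\neq\gamma$, so $(\zeta_1,\gamma)$ is again a bad pair across $\Psi_1$ and $\Psi_2$. By Lemma~\ref{lem:comp}, $\zeta_1$ and $\gamma$ are comparable.

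We then iterate this step. At each stage we have a bad pair $(\beta',\gamma')$ with $\beta'\in\Psi_i$, $\gamma'\in\Psi_j$, $i\neq j$, and the larger element of the pair is replaced by a strictly smaller element of the same $\Psi_i$. The strictness holds because $\zeta\le\beta'$ and $\zeta\ne\beta'$. The larger element of the current pair strictly decreases, or alternatively the sum of heights of the pair strictly decreases. There are finitely many roots, so the process cannot continue forever, and this contradicts the existence of the initial bad pair. Hence (a) holds for $\Psi$.

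The delicate point is the bookkeeping in this descent. If instead $\zeta_1<\gamma$, the roles of $\Psi_1$ and $\Psi_2$ swap, and we must apply (b) to $\gamma\in\Psi_2$ with $\xi=\zeta_1$. We must also check that the new witness avoids the other set, or if it lies in both, that it still yields a bad pair with a distinct element. Every $\zeta$ produced is different from its partner, and $(\zeta,\cdot)\in\mathscr M$ together with Lemma~\ref{lem:comp} keeps comparability. The height-sum measure decreases strictly in either orientation, so the descent terminates in all cases.
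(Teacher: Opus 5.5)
Your proof is correct and is essentially the paper's argument. The paper phrases your descent as a minimal counterexample: it chooses a bad cross pair minimizing $\operatorname{ht}(\gamma)+\operatorname{ht}(\xi)$, orients it with Lemma~\ref{lem:comp}, and applies condition (b) to the larger root to get a bad pair of smaller height sum, exactly as you do; the inheritance of (b) and the construction of $\mathscr R_t$ as the union of all members match the paper as well.
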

\begin{proof}

Let $\Psi,\Lambda\in\mathscr S_t^-$. We first show that
$\Psi\cup\Lambda\in\mathscr S$ by verifying conditions (a) and (b).

If $\Psi\cup\Lambda$ fails condition (a), choose distinct roots $\gamma\in\Psi$,
$\xi\in\Lambda$ with $(\gamma,\xi)\in\mathscr M$ such that
$\operatorname{ht}(\gamma)+\operatorname{ht}(\xi)$ is minimal.
Interchanging $\Psi$ and $\Lambda$ if necessary,
Lemma \ref{lem:comp} gives $\xi<\gamma$. Condition (b) for $\Psi$
provides $\zeta\in\Psi\setminus\{\gamma\}$ with
$\zeta\leq\gamma$ and $(\zeta,\xi)\in\mathscr M$.
If $\zeta=\xi$, condition (a) in $\Psi$ contradicts the
choice of $\gamma,\xi$. Thus, $\zeta<\gamma$, which implies
$\operatorname{ht}(\zeta)+\operatorname{ht}(\xi)
<\operatorname{ht}(\gamma)+\operatorname{ht}(\xi)$.
Since $\zeta\in\Psi$, $\xi\in\Lambda$, $\zeta\ne\xi$, and
$(\zeta,\xi)\in\mathscr M$, this contradicts the minimality of
$\operatorname{ht}(\gamma)+\operatorname{ht}(\xi)$. This proves (a).
For (b), take $\gamma\in\Psi\cup\Lambda$ and
$\xi\in\Phi^+$ with $\xi<\gamma$ and $(\gamma,\xi)\in\mathscr M$.
Apply (b) in whichever of $\Psi,\Lambda$ contains $\gamma$, so there exists $\zeta$ in $\Psi\cup\Lambda$ such that $\zeta\neq \gamma$, $(\zeta,\xi)\in \mathscr{M}$, and $\zeta\leq \gamma$.
Thus $\Psi\cup\Lambda$ satisfies (b), so belongs to $\mathscr S$.

Finally, $t(\Psi\cup\Lambda)=t\Psi\cup t\Lambda\subseteq-\Phi(\mathfrak{u})$.
Thus $\Psi\cup\Lambda\in\mathscr S_t^-$.
The collection $\mathscr S_t^-$ is finite and contains the empty
set, so repeated unions give
$\mathscr{R}_t=\bigcup_{\Psi\in\mathscr S_t^-}\Psi\in\mathscr S_t^-$,
which contains every member.
\end{proof}

Since $\mathscr{R}_t\in\mathscr S$, condition (a) makes its roots
pairwise orthogonal. Their reflections therefore commute, so
the product below is independent of its ordering.
For $J\subseteq\mathscr{R}_t$, define
\begin{equation}\label{eq:am-column}
 x_J=\overline{t\prod_{\gamma\in J}s_\gamma},\qquad
 \Omega_J=\{\eta\in\mathscr{R}_t \mid \eta\leq\gamma
                         \text{ for some }\gamma\in J\}.
\end{equation}

The following result is proven in the classical cases in 
\cite{EnrightShelton}*{Theorem 8.4(ii)}. In their notation, our $\mathscr{R}_t$ is equal to the set $-t^{-1}\Sigma_t$. We give a uniform argument for all Hermitian pairs.

\begin{lemma}\label{lem:am-column}
The dual Verma modules having $L_t$ as a composition factor
are precisely the $N_{x_J}$ with $J\subseteq\mathscr{R}_t$.
Distinct subsets $J$ give distinct indices $x_J$. For each such $J$,
\begin{equation}\label{eq:J}
 \Omega_J\in\mathscr E_{x_J},\qquad
 (\Omega_J)^+_{x_J}=J,\qquad
 t_{x_J}(\Omega_J)=t.
\end{equation}
\end{lemma}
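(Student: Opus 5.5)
The plan has three parts: show that each $N_{x_J}$ contains $L_t$ (by verifying \eqref{eq:J}), show that every dual Verma module containing $L_t$ is of this form, and show that $J\mapsto x_J$ is injective. Throughout we use the injectivity of $\Omega\mapsto t_x(\Omega)$ on $\mathscr E_x$ together with the description of composition factors in \eqref{eq:admissible-factors}.

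\textbf{Step 1 (each $J$ works).} Fix $J\subseteq\mathscr R_t$ and write $r_J=\prod_{\gamma\in J}s_\gamma$. The roots of $\mathscr R_t$ are pairwise orthogonal, so $r_J$ fixes each $\eta\in\mathscr R_t\setminus J$ and sends each $\gamma\in J$ to $-\gamma$. Hence
\[
 tr_J\eta=t\eta\in-\Phi(\mathfrak u)\quad(\eta\notin J),\qquad tr_J\gamma=-t\gamma\in\Phi(\mathfrak u)\quad(\gamma\in J).
\]
Write $tr_J=vx_J$ with $v\in\W_{\m}$. By Lemma~\ref{lem:am-root-conventions}(iii), $v$ preserves $\Phi(\mathfrak u)$ and $-\Phi(\mathfrak u)$, so $x_J\mathscr R_t\subseteq\Phi(\mathfrak u)\cup-\Phi(\mathfrak u)$ and $(\mathscr R_t)^+_{x_J}=J$. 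Since $\Omega_J\subseteq\mathscr R_t$, it inherits the same signs, which gives $(\Omega_J)^+_{x_J}=J$.

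Next we check $\Omega_J\in\mathscr S$. Condition (a) is inherited from $\mathscr R_t$. For (b), take $\gamma\in\Omega_J$ and $\xi$ as in (b). Condition (b) in $\mathscr R_t$ supplies $\zeta\in\mathscr R_t$ with $\zeta\le\gamma$, and then $\zeta\le\gamma\le\gamma'$ for some $\gamma'\in J$, so $\zeta\in\Omega_J$ because $\Omega_J$ is a down-set inside $\mathscr R_t$. Condition (c) holds by the definition of $\Omega_J$, since every element lies below some element of $J$ and $J=(\Omega_J)^+_{x_J}$.

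Finally, $t_{x_J}(\Omega_J)=\overline{x_Jr_J}=\overline{v^{-1}tr_Jr_J}=\overline{v^{-1}t}=t$. Here we use $r_J^2=1$ and the fact that $t\in\W^{\m}$ is the minimal representative of its coset. By \eqref{eq:admissible-factors}, $L_t$ is therefore a factor of $N_{x_J}$.

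\textbf{Step 2 (distinctness).} Since $(\mathscr R_t)^+_{x_J}=J$, the index $x_J$ determines $J$. So distinct subsets $J$ give distinct $x_J$.

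\textbf{Step 3 (converse, the main obstacle).} Suppose $[N_x:L_t]=1$. By \eqref{eq:admissible-factors} there is $\Omega\in\mathscr E_x$ with $t=t_x(\Omega)$. Put $J=\Omega^+_x$, so that $\overline{xr_J}=t$, which gives $xr_J=ut$ with $u\in\W_{\m}$.

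First we show $t\Omega\subseteq-\Phi(\mathfrak u)$. We have $t\Omega=u^{-1}xr_J\Omega$. For $\gamma\in J$ we get $xr_J\gamma=-x\gamma\in-\Phi(\mathfrak u)$, while for $\gamma\in\Omega\setminus J$ we get $xr_J\gamma=x\gamma\in-\Phi(\mathfrak u)$, because $\Omega\in\mathscr S_x$. Applying $u^{-1}$ preserves $-\Phi(\mathfrak u)$, so $\Omega\in\mathscr S_t^-$. By Lemma~\ref{lem:am-negative-union}, $\Omega\subseteq\mathscr R_t$, and hence $J\subseteq\mathscr R_t$. Then $\overline{tr_J}=\overline{u^{-1}x}=x$, so $x=x_J$.

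The delicate points are the sign bookkeeping under the non-minimal factor $u$ and confirming that $\overline{u^{-1}x}=x$ when $x\in\W^{\m}$. Both are handled by Lemma~\ref{lem:am-root-conventions}(ii)–(iii), since $\W_{\m}$ preserves the sets $\pm\Phi(\mathfrak u)$.

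As a consistency check, injectivity in \eqref{eq:admissible-factors} then forces $\Omega=\Omega_J$, since both are elements of $\mathscr E_{x_J}$ with label $t$.
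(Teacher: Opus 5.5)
Your proposal is correct and follows the paper's proof. The steps match one for one: the sign computation $x_J\eta=v^{-1}tr_J\eta$ via orthogonality of $\mathscr R_t$ and $\W_{\m}$-invariance of $\pm\Phi(\mathfrak u)$, the check of (a)--(c) for $\Omega_J$, the label computation $\overline{x_Jr_J}=\overline{v^{-1}t}=t$, recovering $J$ from the signs of $x_J$ on $\mathscr R_t$, and the converse showing $t\Omega\subseteq-\Phi(\mathfrak u)$ so that $\Omega\subseteq\mathscr R_t$ and $x=\overline{tr_J}$.
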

\begin{proof}
Fix $J\subseteq\mathscr{R}_t$. We first prove the inclusions
\begin{equation}\label{eq:am-column-signs}
 x_JJ\subseteq\Phi(\mathfrak{u}),\qquad
 x_J(\mathscr{R}_t\setminus J)\subseteq-\Phi(\mathfrak{u}).
\end{equation}
Put $R_J=\prod_{\gamma\in J}s_\gamma$ and write
$tR_J=v_Jx_J$ with $v_J\in\W_{\m}$.
Fix $\eta\in\mathscr{R}_t$. For $\gamma\in J$ with $\gamma\ne\eta$,
orthogonality gives $\langle\eta,\gamma\rangle=0$, so
$s_\gamma\eta=\eta$.
If $\eta\in J$, the factor $s_\eta$ in the product $R_J$ sends
$\eta$ to $-\eta$, and every other factor of $R_J$ fixes both
$\eta$ and $-\eta$.
If $\eta\notin J$, every factor of $R_J$ fixes $\eta$. Hence
\[
 R_J\eta=\begin{cases}-\eta,&\eta\in J,\\\eta,&\eta\in\mathscr{R}_t\setminus J.\end{cases}
\]
Also, $tR_J=v_Jx_J$ implies $x_J=v_J^{-1}tR_J$, and therefore
$x_J\eta=v_J^{-1}tR_J\eta$.
Since $t\eta\in-\Phi(\mathfrak{u})$ and $v_J^{-1}$ preserves both
$\Phi(\mathfrak{u})$ and $-\Phi(\mathfrak{u})$ by Lemma~\ref{lem:am-root-conventions}\textup{(iii)},
$x_J\eta\in\Phi(\mathfrak{u})$ exactly for $\eta\in J$.

Next, we prove (\ref{eq:J}). Since $\Omega_J\subseteq\mathscr{R}_t$, \eqref{eq:am-column-signs} gives
$x_J\Omega_J\subseteq\Phi(\mathfrak{u})\cup-\Phi(\mathfrak{u})$.
The set $\Omega_J$ satisfies (a) since $\mathscr{R}_t$ does.
For (b), take $\eta\in\Omega_J$ and $\xi<\eta$ with
$(\eta,\xi)\in\mathscr M$. Condition (b) in $\mathscr{R}_t$ supplies
$\zeta\in\mathscr{R}_t\setminus\{\eta\}$ such that
$\zeta\leq\eta$ and $(\zeta,\xi)\in\mathscr M$. At the same time, since $\eta\in\Omega_J$, the definition
of $\Omega_J$ gives $\gamma\in J$ with $\eta\leq\gamma$.
Because $\zeta\leq\eta\leq\gamma$, it follows that $\zeta$ belongs to
$\Omega_J$.
For (c), let $\eta\in\Omega_J$. By definition, there is
$\gamma\in J$ with $\eta\leq\gamma$.
Since $J\subseteq\mathscr{R}_t$ and $\gamma\leq\gamma$, the definition
of $\Omega_J$ also gives $\gamma\in\Omega_J$.
The inclusion $x_JJ\subseteq\Phi(\mathfrak{u})$ in \eqref{eq:am-column-signs} gives
$x_J\gamma\in\Phi(\mathfrak{u})$. Thus $\gamma$ is the root required
by condition (c) for $\eta$, proving $\Omega_J\in\mathscr E_{x_J}$.
Since $J\subseteq\Omega_J\subseteq\mathscr{R}_t$ by definition,
\eqref{eq:am-column-signs} also gives $(\Omega_J)_{x_J}^+=J$.
Since $R_J^2=e$,
\[
 t_{x_J}(\Omega_J)=\overline{x_JR_J}
 =\overline{v_J^{-1}t}=t.
\]
Proposition \ref{prop:am-weight-normalization} gives $[N_{x_J}:L_t]=1$.

Conversely, suppose $[N_x:L_t]=1$. Choose the unique
$\Omega\in\mathscr E_x$ with $t_x(\Omega)=t$, and put
$J=\Omega_x^+$ and $R_J=\prod_{\gamma\in J}s_\gamma$.
By definition of $t_x(\Omega)$, there exists $v\in\W_{\m}$ such that $xR_J=vt$.
Since $\Omega\in\mathscr E_x$ is pairwise orthogonal and
$J=\Omega_x^+$, we have
\[
xR_J\eta=
\begin{cases}
-x\eta,&\eta\in J,\\
 x\eta,&\eta\in\Omega\setminus J,
\end{cases}
\]
which, in either case, belongs to $-\Phi(\mathfrak{u})$. Applying $v^{-1}$, which preserves $-\Phi(\mathfrak{u})$ by
Lemma~\ref{lem:am-root-conventions}\textup{(iii)}, gives
$t\Omega=v^{-1}xR_J\Omega\subseteq-\Phi(\mathfrak{u})$.
Therefore $\Omega\in\mathscr S_t^-$ and $\Omega\subseteq\mathscr{R}_t$.
Moreover $tR_J=v^{-1}x$, so
$x_J=\overline{tR_J}=x$.

The map $J\mapsto x_J$ is injective because
\eqref{eq:am-column-signs} recovers
$J=\{\gamma\in\mathscr{R}_t:x_J\gamma\in\Phi(\mathfrak{u})\}$ from $x_J$.
\end{proof}

Lemma \ref{lem:am-column} gives the following descent property.

\begin{lemma}\label{lem:am-common-descent}
Let $x,t\in\W^{\m}$ and $\alpha\in\Pi$ satisfy
$y=xs_\alpha\in\W^{\m}$ and $\ell(y)=\ell(x)+1$.
If $L_t$ is a composition factor of both $N_x$ and $N_y$, then $t\alpha\in-\Phi(\mathfrak{u})$.
In particular, $ts_\alpha<t$.
\end{lemma}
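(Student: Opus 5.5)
Apply Lemma~\ref{lem:am-column} to $t$, with $\mathscr{R}_t$ as in Lemma~\ref{lem:am-negative-union}. Since $L_t$ occurs in both $N_x$ and $N_y$, there are subsets $J,K\subseteq\mathscr{R}_t$ with $x=x_J$ and $y=x_K$. By \eqref{eq:am-column-signs}, $x_J$ sends $J$ into $\Phi(\mathfrak{u})$ and $\mathscr{R}_t\setminus J$ into $-\Phi(\mathfrak{u})$, and similarly for $x_K$. Also, by the construction in \eqref{eq:am-column} there are $v_J,v_K\in\W_{\m}$ with
\[
tR_J=v_Jx_J\qquad\text{and}\qquad tR_K=v_Kx_K .
\]
Because $y=xs_\alpha$, this gives
\[
v_K^{-1}tR_K=v_J^{-1}tR_Js_\alpha ,
\]
so that
\[
t^{-1}vt=R_Js_\alpha R_K\qquad\text{for }v=v_Jv_K^{-1}\in\W_{\m}.
\]
The plan is to compare $J$ and $K$ and extract $\alpha$.

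First I would show $J\neq K$; this holds because $x_J\neq x_K$ and $J\mapsto x_J$ is injective. Next I would apply both elements to roots. We have $y\alpha=-x\alpha$ and $x\alpha\in\Phi(\mathfrak{u})$. For $\eta\in\mathscr{R}_t$, compute
\[
y\eta=x s_\alpha\eta=x\eta-\langle\eta,\alpha^\vee\rangle x\alpha .
\]
Since $\tilde c$ takes values in $\{-1,0,1\}$ on roots and $x\eta,y\eta\in\pm\Phi(\mathfrak{u})$, a sign change between $x\eta$ and $y\eta$ forces $\langle\eta,\alpha^\vee\rangle\neq0$. Roots of $\mathscr{R}_t$ that change sign lie in $J\triangle K$; this set is nonempty.

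The key claim is that $\alpha\in\mathscr{R}_t$. Given the claim, the conclusion is immediate, since $t\mathscr{R}_t\subseteq-\Phi(\mathfrak{u})$ gives $t\alpha\in-\Phi(\mathfrak{u})$. Then $t\alpha<0$, so $\alpha$ is a right descent of $t$ by Lemma~\ref{lem:root-bruhat-ideals}(i), i.e. $ts_\alpha<t$.

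To prove the claim, I would take $\eta\in J\triangle K$ of minimal height. Then $\langle\eta,\alpha\rangle\neq0$, and $x\eta$ and $y\eta=x\eta\mp c\,x\alpha$ have opposite $\tilde c$. This forces $y\eta=-x\eta$ up to the relation $s_\alpha\eta=\eta-c\alpha$. I would aim to show $\eta=\alpha$ by checking that $s_\alpha\eta=-\eta$. If $\eta\neq\alpha$, I would instead show that $\mathscr{R}_t\cup\{\alpha\}$ is still in $\mathscr S_t^-$, using conditions (a) and (b) and Lemma~\ref{lem:comp}; maximality of $\mathscr{R}_t$ then gives $\alpha\in\mathscr{R}_t$. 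To get $t\alpha\in-\Phi(\mathfrak{u})$ along the way, I would use $t=\overline{xR_J}$. The simple root $\alpha$ is orthogonal to every $\gamma\in J$ that it does not meet, so $t\alpha$ equals $v^{-1}xR_J\alpha$; this can be expressed through $x\alpha\in\Phi(\mathfrak{u})$ and the sign data.

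This root bookkeeping is the main obstacle, especially the non-orthogonal case $\langle\alpha,\gamma\rangle\neq0$ for $\gamma\in J$. As a fallback, I would argue by weights. The map $\delta_{x,y}$, or a translation argument, shows that $L_t$ shared by $M_x$ and $M_y$ survives translation to the $s_\alpha$-wall. Indeed, $TM_x\cong TM_y$ by \eqref{eq:am-parabolic-translation}, since $x\cdot\mu=y\cdot\mu$. Exactness of $T$ together with \eqref{eq:am-wall-translation} then gives $TL_t\neq0$, hence $ts_\alpha<t$. The nonvanishing needs a justification, but multiplicity-freeness of both modules is what I would use. Finally, $ts_\alpha<t$ with $t\in\W^{\m}$, together with Lemma~\ref{lem:am-root-conventions}(iv), yields $t\alpha\in-\Phi(\mathfrak{u})$ once $t\alpha\notin\Phi(\m)$ is excluded. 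I would exclude this using the $\W_{\m}$-invariance of $\pm\Phi(\mathfrak{u})$ applied to $v^{-1}xR_J\alpha$.
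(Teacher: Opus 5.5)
Your setup matches the paper's: $x=x_J$ and $y=x_K$ with $J\neq K$ inside $\mathscr R_t$, and everything reduces to showing $\alpha\in\mathscr R_t$. That key claim, however, is never proved.

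\textbf{The root computation does not close in types $\mathsf B$, $\mathsf C$.} Carrying your $\tilde c$ computation through gives $\tilde c(y\eta)=\tilde c(x\eta)-\langle\eta,\alpha^\vee\rangle$. Hence $\langle\eta,\alpha^\vee\rangle=2$ for $\eta\in J\setminus K$ and $\langle\eta,\alpha^\vee\rangle=-2$ for $\eta\in K\setminus J$. In simply laced types this already forces $J\triangle K=\{\alpha\}\subseteq J$. In types $\mathsf B$ and $\mathsf C$, though, a long root $\eta\neq\alpha$ can satisfy $\langle\eta,\alpha^\vee\rangle=\pm2$ for a short simple $\alpha$; for instance $\eta=2\epsilon_1$ and $\alpha=\epsilon_1-\epsilon_2$. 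Nothing in your sketch excludes $J\triangle K=\{\eta\}$ for such an $\eta$.

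Your proposed way around the case $\eta\neq\alpha$ cannot work. Since $\eta\in\mathscr R_t$ and $\langle\eta,\alpha\rangle\neq0$, we have $(\eta,\alpha)\in\mathscr M$, so $\mathscr R_t\cup\{\alpha\}$ violates condition (a). Moreover, membership in $\mathscr S_t^-$ presupposes $t\alpha\in-\Phi(\mathfrak u)$, which is the conclusion you are trying to reach.

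\textbf{The missing idea.} Choose $\varpi$ with $\langle\varpi,\beta\rangle=\tilde c(\beta)$. It is fixed by $\W_{\m}$ and satisfies $s_\gamma(t^{-1}\varpi)=t^{-1}\varpi+\gamma^\vee$ for $\gamma\in\mathscr R_t$. Apply $x^{-1}=R_Jt^{-1}v_J$ and $y^{-1}=R_Kt^{-1}v_K$ from your own factorizations to $\varpi$, and use $y^{-1}\varpi=s_\alpha x^{-1}\varpi=x^{-1}\varpi-\alpha^\vee$. This gives
\[
\alpha^\vee=\sum_{\gamma\in J\setminus K}\gamma^\vee-\sum_{\gamma\in K\setminus J}\gamma^\vee .
\]
Now take $\langle\alpha^\vee,\alpha^\vee\rangle$ and use orthogonality of $\mathscr R_t$. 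The ``both long'' clause of $\mathscr M$ allows at most one long root in $\mathscr R_t$. So if two or more roots occurred, one would be short, with $\langle\gamma^\vee,\gamma^\vee\rangle\geq\langle\alpha^\vee,\alpha^\vee\rangle$, which is impossible. Hence $J\setminus K=\{\alpha\}$ and $K\setminus J=\varnothing$.

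\textbf{The translation fallback cannot be repaired.} From $TM_x\cong TM_y$, exactness, \eqref{eq:am-wall-translation} and multiplicity-freeness, you learn only that $M_x$ and $M_y$ have the same factors $L_{t'}$ with $t's_\alpha<t'$. A common factor with $ts_\alpha>t$ is killed by $T$ on both sides and leaves no trace. This argument never uses that $\mathfrak u$ is abelian, and the conclusion fails without that hypothesis. For example, take $\mathfrak{sl}_3$ with $\p=\mathfrak b$, where Verma modules are multiplicity free. Then $\Delta(s_1\cdot0)\subseteq\Delta(0)$ share $L(s_1s_2\cdot0)$, although $s_1s_2s_1>s_1s_2$. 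In the paper the implication runs the other way: this lemma is what guarantees, in the proof of Theorem~\ref{thm:adjacentmaps}, that common factors survive translation to the wall (see the Remark following that proof).
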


\begin{proof}
By Lemma~\ref{lem:am-column}, write $x=x_J$ and $y=x_K$
with $J,K\subseteq\mathscr R_t$.
Choose $\varpi\in\mathbb R\Phi$ such that
$\langle\varpi,\beta\rangle=\tilde c(\beta)$ for every
$\beta\in\Phi$. For $\gamma\in\mathscr R_t$, we have $t\gamma\in-\Phi(\mathfrak{u})$, so
\begin{equation}
 \langle t^{-1}\varpi,\gamma\rangle
 =\langle\varpi,t\gamma\rangle
 =\tilde c(t\gamma)=-1.
\end{equation}
By \eqref{eq:root-reflection} this gives
$s_\gamma(t^{-1}\varpi)=t^{-1}\varpi+\gamma^\vee$.
Since the roots in $\mathscr R_t$ are pairwise orthogonal,
$s_\gamma$ fixes $\xi^\vee$ for every
$\xi\in\mathscr R_t\setminus\{\gamma\}$.
Moreover, $\W_{\m}$ fixes $\varpi$ by
Lemma~\ref{lem:am-root-conventions}.
Thus \eqref{eq:am-column} gives
\begin{equation}\label{eq:impLem}
 x^{-1}\varpi
 =\left(\prod_{\gamma\in J}s_\gamma\right)t^{-1}\varpi
 =t^{-1}\varpi+\sum_{\gamma\in J}\gamma^\vee,\quad \textnormal{and, similarly}\quad
 y^{-1}\varpi
 =t^{-1}\varpi+\sum_{\gamma\in K}\gamma^\vee.
\end{equation}
On the other hand, Lemma~\ref{lem:am-root-conventions}\textup{(iv)}
gives $x\alpha\in\Phi(\mathfrak{u})$, so
$\langle x^{-1}\varpi,\alpha\rangle=\tilde c(x\alpha)=1$.
Since $y=xs_\alpha$, \eqref{eq:root-reflection} gives
\[
 y^{-1}\varpi=s_\alpha(x^{-1}\varpi)
 =x^{-1}\varpi-\alpha^\vee.
\]
Subtracting the formulas (\ref{eq:impLem}) for $x^{-1}\varpi$ and
$y^{-1}\varpi$, we obtain
\begin{equation}\label{eq:alphaVee}
 \alpha^\vee
 =\sum_{\gamma\in J\setminus K}\gamma^\vee
  -\sum_{\gamma\in K\setminus J}\gamma^\vee.
\end{equation}
Since $\mathscr{R}_t$ is pairwise orthogonal, we get
\begin{equation}\label{eq:am-coroot-squares}
 \langle\alpha^\vee,\alpha^\vee\rangle=\sum_{\gamma\in J\setminus K}
       \langle\gamma^\vee,\gamma^\vee\rangle+\sum_{\gamma\in K\setminus J}
       \langle\gamma^\vee,\gamma^\vee\rangle.
\end{equation}
We claim that $J\setminus K=\{\alpha\}$ and $K\setminus J=\varnothing$. To see this, assume for contradiction that two or more roots occur in sums (\ref{eq:am-coroot-squares}). Since $J,K\subseteq \mathscr{R}_t$, condition (a) and
\eqref{eq:root-incompatibility} forces the existence of a short root $\gamma$ in $J\setminus K$ or $K\setminus J$. The root $\gamma$ contributes
$\langle\gamma^\vee,\gamma^\vee\rangle$ to the right-hand
side of \eqref{eq:am-coroot-squares}, and at least one
other root contributes a strictly positive term, so $\langle\alpha^\vee,\alpha^\vee\rangle
 >\langle\gamma^\vee,\gamma^\vee\rangle$ .
Since $\gamma$ is short, we obtain
\[
 \langle\alpha^\vee,\alpha^\vee\rangle
 >\langle\gamma^\vee,\gamma^\vee\rangle
   =\frac{4}{\langle\gamma,\gamma\rangle}\geq \frac{4}{\langle\alpha,\alpha\rangle}=\langle\alpha^\vee,\alpha^\vee\rangle,
\]
a contradiction. Thus $J$ and $K$ differ by exactly one root. This root cannot belong to $K\setminus J$, since the formula
(\ref{eq:alphaVee}) would then equate a positive coroot with
a negative coroot. It therefore belongs to $J\setminus K$,
and its coroot equals $\alpha^\vee$. Therefore, $J\setminus K=\{\alpha\}$ and $K\setminus J=\varnothing$, as claimed. In particular, $\alpha\in\mathscr R_t$, so
$t\alpha\in-\Phi(\mathfrak{u})$ and $ts_\alpha<t$.
\end{proof}

\begin{proof}[Proof of Proposition~\ref{prop:am-common}]
Put $t=t_x(\Omega)$ and $J=\Omega_x^+$.
Lemma~\ref{lem:am-column} gives
$J\subseteq\mathscr R_t$ and $x=x_J$.
By Lemma~\ref{lem:am-root-conventions}\textup{(iv)} we have $x\alpha\in\Phi(\mathfrak{u})$. Thus,
\eqref{eq:admissible-positive} gives
\begin{equation}\label{eq:am-common-positive-root}
 \alpha\in\Omega\quad\Longleftrightarrow\quad\alpha\in J.
\end{equation}

Suppose first that $[N_y:L_t]=1$.
Lemma~\ref{lem:am-common-descent} gives
$t\alpha\in-\Phi(\mathfrak{u})$, so that $\{\alpha\}\in\mathscr S_t^-$,
and Lemma~\ref{lem:am-negative-union} gives
$\alpha\in\mathscr R_t$.
Now \eqref{eq:am-column-signs} and $x\alpha\in\Phi(\mathfrak{u})$
imply $\alpha\in J\subseteq\Omega$.

Conversely, suppose $\alpha\in\Omega$.
By \eqref{eq:am-common-positive-root}, we have $\alpha\in J$.
Using \eqref{eq:am-column}, write
$t\prod_{\gamma\in J}s_\gamma=vx$ with $v\in\W_{\m}$.
The reflections commute and $s_\alpha^2=e$, so
\[
 t\prod_{\gamma\in J\setminus\{\alpha\}}s_\gamma
 =\left(t\prod_{\gamma\in J}s_\gamma\right)s_\alpha
 =vxs_\alpha=vy.
\]
Since $y\in\W^{\m}$, \eqref{eq:am-column} gives
$x_{J\setminus\{\alpha\}}=y$.
Applying Lemma~\ref{lem:am-column} to $J\setminus\{\alpha\}$,
we obtain
\[
 t_y(\Omega_{J\setminus\{\alpha\}})=t,\qquad
 (\Omega_{J\setminus\{\alpha\}})^+_y=J\setminus\{\alpha\}.
\]
Since $\ell(y)=\ell(x)+1$ and
$|J\setminus\{\alpha\}|=|\Omega_x^+|-1$,
\eqref{eq:am-weight-layers} gives
\[
 [\operatorname{Gr}^{W}_{d_X+\ell(x)+|\Omega_x^+|}
   N_y:L_t]=1,
\]
as required to complete the argument.
\end{proof}

\subsection{Completion of the proof} For convenience in this section, we work with
\begin{equation}
f_{y,x}=\delta_{x,y}^{\vee}:M_y\longrightarrow M_x.    
\end{equation}
We first relate Theorem \ref{thm:adjacentmaps} to Proposition \ref{prop:am-common} by reducing to the assertion (\ref{eq:maxRank}).

\begin{lemma}\label{lem:am-weight-reduction}
Let $x\in\W^{\m}$ and $\alpha\in\Pi$ satisfy $y=xs_\alpha\in\W^{\m}$ and $\ell(y)=\ell(x)+1$. To prove Theorem~\ref{thm:adjacentmaps},
it suffices to prove that 
\begin{equation}\label{eq:fmap}
[\operatorname{im}(f_{y,x}): L_t]=[M_y:L_t][M_x:L_t],
\end{equation}
for all $t\in \W^{\m}$.
\end{lemma}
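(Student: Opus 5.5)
Assume \eqref{eq:fmap} holds for all $t\in\W^{\m}$; we deduce \eqref{eq:adjacent-weight-image}. The plan has three steps: dualize to get \eqref{eq:maxRank}, identify the composition factors of the image using Proposition~\ref{prop:am-common}, and then locate them in the weight filtration using strictness.

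\textbf{Dualizing.} BGG duality $(-)^\vee$ is exact and contravariant, and it fixes the simple modules $L_t$. Hence $\operatorname{im}(\delta_{x,y})=\operatorname{im}(f_{y,x}^\vee)\cong\operatorname{im}(f_{y,x})^\vee$, and the multiplicities $[N_x:L_t]=[M_x:L_t]$ and $[N_y:L_t]=[M_y:L_t]$ agree. Therefore \eqref{eq:fmap} is equivalent to \eqref{eq:maxRank}.

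\textbf{Composition factors of the image.} Both $N_x$ and $N_y$ are multiplicity free, so \eqref{eq:maxRank} says that $L_t$ occurs in $\operatorname{im}(\delta_{x,y})$, necessarily once, exactly when it is a common factor of $N_x$ and $N_y$. The factors of $N_x$ are the $L_{t_x(\Omega)}$ with $\Omega\in\mathscr E_x$. By Proposition~\ref{prop:am-common}, such a factor also occurs in $N_y$ if and only if $\alpha\in\Omega$.

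\textbf{Weights of these factors.} Set $p_\Omega=d_X+\ell(x)+|\Omega_x^+|$. By Proposition~\ref{prop:am-weight-normalization}, $L_{t_x(\Omega)}$ lies in $\operatorname{Gr}^W_{p_\Omega}N_x$. By Proposition~\ref{prop:am-common}, it also lies in $\operatorname{Gr}^W_{p_\Omega}N_y$. Weight filtrations are finite and every morphism is strict, so $\operatorname{Gr}^W$ is exact. Applied to the exact sequence $N_x\twoheadrightarrow\operatorname{im}\delta_{x,y}\hookrightarrow N_y$, this gives
\[
\operatorname{Gr}^W_p\operatorname{im}(\delta_{x,y})=\operatorname{im}\bigl(\operatorname{Gr}^W_p\delta_{x,y}\bigr),
\]
with the induced filtration on the image. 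Moreover, $\operatorname{Gr}^W_p\operatorname{im}(\delta_{x,y})$ is a quotient of $\operatorname{Gr}^W_pN_x$ and a submodule of $\operatorname{Gr}^W_pN_y$. It follows that $L_{t_x(\Omega)}$ with $\alpha\in\Omega$ lies in $\operatorname{Gr}^W_p\operatorname{im}(\delta_{x,y})$ if and only if $p=p_\Omega$.

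\textbf{Conclusion.} The modules $\operatorname{Gr}^W_pN_x$ are semisimple, so their quotients are semisimple. Hence $\operatorname{im}(\operatorname{Gr}^W_p\delta_{x,y})$ is the direct sum of the $L_{t_x(\Omega)}$ with $\alpha\in\Omega$ and $p_\Omega=p$, which is \eqref{eq:adjacent-weight-image}.

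The step that needs the most care is the third one: the exactness of $\operatorname{Gr}^W$ on strict morphisms, and the fact that a factor common to $N_x$ and $N_y$ cannot appear in the image at a weight different from $p_\Omega$. That second point follows because such a factor's weight is pinned down on both sides and the two values agree.
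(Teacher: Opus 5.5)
Your proposal is correct and follows the same route as the paper. You dualize \eqref{eq:fmap} to get \eqref{eq:maxRank}, use Propositions~\ref{prop:am-weight-normalization} and~\ref{prop:am-common} to identify the common factors and their weights, and then use strictness to place each one in the right graded piece of the image. The only difference is that you spell out the strictness step (identifying $\operatorname{im}(\operatorname{Gr}^W_p\delta_{x,y})$ with $\operatorname{Gr}^W_p\operatorname{im}(\delta_{x,y})$, a quotient of $\operatorname{Gr}^W_pN_x$ and a submodule of $\operatorname{Gr}^W_pN_y$) slightly more explicitly than the paper does.
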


\begin{proof}
Suppose that (\ref{eq:fmap}) holds. Exactness of BGG duality gives that \eqref{eq:maxRank} holds. 

Let $t\in \W^{\m}$ be such that $L_t$ is a summand of $\operatorname{im}(
  \operatorname{Gr}^{W}_{p}\delta_{x,y}
)$. Then by strictness of $\delta_{x,y}$, $L_t$ is a summand of $
  \operatorname{Gr}^{W}_{p}N_x
$ and $
  \operatorname{Gr}^{W}_{p}N_y
$. By Propositions \ref{prop:am-weight-normalization} and \ref{prop:am-common}, we have that $t=t_x(\Omega)$ for some $\Omega\in \mathscr{E}_x$ with $\alpha\in \Omega$ and $d_X+\ell(x)+|\Omega^+_x|=p$. Thus, $L_t$ is a summand of the right side of (\ref{eq:adjacent-weight-image}).

Conversely, if $t=t_x(\Omega)$ for some $\Omega\in \mathscr{E}_x$ with $\alpha\in \Omega$ and $d_X+\ell(x)+|\Omega^+_x|=p$, then Propositions \ref{prop:am-weight-normalization} and \ref{prop:am-common} give that 
\[
[\operatorname{Gr}^{W}_{p}N_x:L_t][\operatorname{Gr}^{W}_{p}N_y:L_t]=1.
\]
Therefore, \eqref{eq:maxRank} and strictness give that $L_t$ is a summand of $\operatorname{im}(
  \operatorname{Gr}^{W}_{p}\delta_{x,y})$.
\end{proof}

\begin{proof}[Proof of Theorem~\ref{thm:adjacentmaps}]
Write $s=s_\alpha$ and $y=xs>x$, and let $\mu=-\omega_\alpha$
and $T=T_0^\mu$ be as in Section~\ref{subsec:am-translation}.
Since $s\cdot\mu=\mu$, we have $x\cdot \mu=y\cdot \mu$, so \eqref{eq:am-parabolic-translation} gives
\begin{equation}\label{eq:transIsom}
 TM_x\cong Q_\mu\Delta(x\cdot\mu)
 =Q_\mu\Delta(y\cdot\mu)\cong TM_y.
\end{equation}
Put $f=f_{y,x}$ and $I=\operatorname{im}f$. By exactness of $T$,
we have an exact sequence
\begin{equation}\label{eq:coker}
TM_y\xrightarrow{Tf}TM_x\longrightarrow T(M_x/I)\longrightarrow0,
\qquad
\operatorname{im}(Tf)=TI.
\end{equation}
Since $M_y$ has simple head $L_y$ and $I\ne0$, we have
$I\twoheadrightarrow L_y$. Applying $T$ gives $TI\twoheadrightarrow TL_y\ne0$, where the nonvanishing follows from $ys=x<y$ and
\eqref{eq:am-wall-translation}. Thus $0\ne TI\subseteq TM_x$, so $Tf\ne0$ and $TM_x\ne0$.
Since $TM_x$ is a nonzero quotient of $\Delta(x\cdot\mu)$,
its highest weight space is one-dimensional and generates
$TM_x$, so $\operatorname{End}_{U(\g)}(TM_x)=\mathbb C$.
Together with (\ref{eq:transIsom}), this implies that $Tf$ is an
isomorphism. Then (\ref{eq:coker}) gives $T(M_x/I)=0$.

By Lemma~\ref{lem:am-weight-reduction} and multiplicity-freeness,
it suffices to show that every common composition factor $L_t$
of $M_x$ and $M_y$ occurs in $I$.
For such $L_t$, Lemma~\ref{lem:am-common-descent} and
\eqref{eq:am-wall-translation} give $TL_t\ne0$.
Since $T(M_x/I)=0$, exactness gives $[M_x/I:L_t]=0$,
so $L_t$ occurs in $I$.
\end{proof}

\begin{remark}
The translation step in the proof applies to arbitrary parabolics:
a nonzero map $f:M_{xs_\alpha}\to M_x$ with $xs_\alpha>x$
becomes an isomorphism after translation onto the $\alpha$-wall.
For Hermitian pairs, Lemma~\ref{lem:am-common-descent} ensures
that every common composition factor survives this translation, implying the maximal rank statement. For general parabolics, common composition factors
may be annihilated by translation, so this argument 
need not determine the image.
\end{remark}

\section{Local cohomology with support in Schubert varieties}\label{sec:local-cohomology}

We use Theorem~\ref{thm:adjacentmaps} to determine the weight
filtration on local cohomology supported in Schubert varieties.
We then deduce a formula for their Hodge rational homology level.

\subsection{The local cohomology formula}
Recall the notation $\mathscr E_x$, $\Omega_x^+$, and
$t_x(\Omega)$ from Section~\ref{sec:Verma}.
For $x,y\in\W^{\m}$ and $\Omega\in\mathscr E_y$, define
\begin{equation}\label{eq:lc-directions}
\mathscr I(x,y,\Omega)
=\left\{\alpha\in\Pi\ \middle|\
\begin{gathered}
ys_\alpha\in\W^{\m},\qquad x\leq ys_\alpha<y,\\
\Omega\cup\{\alpha\}\in\mathscr E_{ys_\alpha}
\end{gathered}\right\}.
\end{equation}
We call a pair $(y,\Omega)$, with $\Omega\in\mathscr E_y$,
\defi{terminal} if $\Omega_y^+\cap\Pi=\varnothing$.
For $x\in\W^{\m}$, $p\in\mathbb Z$, and $q\geq0$, set
\begin{equation}\label{eq:lc-terminal-pairs}
\mathscr Y_p(x;q)
=\left\{(y,\Omega)\ \middle|\
\begin{gathered}
y\in\W^{\m},\qquad x\leq y,\qquad \ell(y)=q,\\
\Omega\in\mathscr E_y,\qquad
\Omega_y^+\cap\Pi=\varnothing,\qquad
|\Omega_y^+|=p-d_X-q
\end{gathered}\right\}.
\end{equation}
These sets are the root-theoretic counterparts of
$\mc I(\a,\b,\mathbb D)$ and $\mc Y_p(\a;q)$
in \cite{schubertLC}*{Section 4.3}.

\begin{theorem}\label{thm:local-cohomology}
For every $x\in\W^{\m}$, $q\geq0$, and $p\in\mathbb Z$,
there is an isomorphism of pure Hodge modules
\begin{equation}\label{eq:lc-main-formula}
\operatorname{Gr}^W_p
\mathscr H^q_{Z_x}(\mathscr O_X^H)
\cong
\bigoplus_{\substack{
(y,\Omega)\in\mathscr Y_p(x;q)\\
\mathscr I(x,y,\Omega)=\varnothing
}}
\operatorname{IC}_{Z_{t_y(\Omega)}}^H
\left(\frac{d_X-\ell(t_y(\Omega))-p}{2}\right).
\end{equation}
\end{theorem}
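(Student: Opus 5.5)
We pass to global sections and work with the complex $(GC_x^\bullet,\partial)$. By \eqref{eq:lc-strictness} we have $\operatorname{Gr}^W_pH^q(GC_x^\bullet)\cong H^q(\operatorname{Gr}^W_pGC_x^\bullet)$, and Beilinson--Bernstein localization together with the remark after Proposition~\ref{prop:am-weight-normalization} recovers the Hodge modules, including the Tate twists, from the $U(\g)$-module data. By Proposition~\ref{prop:am-weight-normalization}, each term $\operatorname{Gr}^W_pGC_x^q$ is semisimple:
\[
\operatorname{Gr}^W_pGC_x^q\cong\bigoplus_{\substack{y\geq x,\ \ell(y)=q\\ \Omega\in\mathscr E_y,\ |\Omega_y^+|=p-d_X-q}}L_{t_y(\Omega)}.
\]
The task is therefore to compute the cohomology of a complex of semisimple modules. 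Its differentials are the maps $\operatorname{Gr}^W_p\delta_{y,ys_\alpha}$, and Theorem~\ref{thm:adjacentmaps} describes their images.

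First, decompose the complex by the label $t$. Since the maps are $U(\g)$-linear between semisimple modules, $\operatorname{Gr}^W_pGC_x^\bullet$ splits as a direct sum over $t\in\W^{\m}$ of its $L_t$-isotypic subcomplexes. Fix $t$. By Lemma~\ref{lem:am-column}, the occurrences of $L_t$ are indexed by subsets $J\subseteq\mathscr R_t$, through $y=x_J$ and $\Omega=\Omega_J$, with $(\Omega_J)^+_{x_J}=J$. Restricting to weight $p$ fixes $\ell(x_J)+|J|=p-d_X$. The constraint $x\leq x_J$ selects a family $\mathcal F_t$ of subsets $J$.

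Next, the proof of Proposition~\ref{prop:am-common} shows that for $\alpha\in J\cap\Pi$ we have $x_{J\setminus\{\alpha\}}=x_Js_\alpha$ with $\ell$ increased by one. Conversely, Lemma~\ref{lem:am-common-descent} shows that every nonzero component between the $L_t$-isotypic pieces is of this form. Theorem~\ref{thm:adjacentmaps}, through its maximal rank statement, says that each such component is nonzero, hence an isomorphism $L_t\to L_t$. The $L_t$-isotypic subcomplex is thus a complex indexed by $\mathcal F_t$, with nonzero maps exactly along removals of simple roots. Removals of distinct simple roots commute, because the reflections commute, and $\partial^2=0$ forces the squares to anticommute after rescaling. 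Rescaling basis vectors turns this into a cubical Koszul-type complex.

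Finally, compute the cohomology. The plan is to show that $\mathcal F_t$ is a union of "intervals", each of the form $\{J_0\cup S: S\subseteq A\}$ where $A$ is a set of simple roots. The Koszul complex on such an interval is acyclic unless $A=\varnothing$, in which case it contributes a single $L_t$ in degree $\ell(x_{J_0})$. A surviving class then corresponds to a $J$ that admits no simple root to remove, i.e. $J\cap\Pi=\varnothing$, which is the terminal condition, and that admits no simple root to add while staying in $\mathcal F_t$. Adding $\alpha$ gives $x_{J\cup\{\alpha\}}=ys_\alpha<y$, and Lemma~\ref{lem:am-column} identifies $\Omega_{J\cup\{\alpha\}}$ with $\Omega\cup\{\alpha\}\in\mathscr E_{ys_\alpha}$. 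Staying in $\mathcal F_t$ requires $x\leq ys_\alpha$, so this second condition is exactly $\mathscr I(x,y,\Omega)=\varnothing$.

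The main obstacle is this last step, the combinatorial structure of $\mathcal F_t$. One must show that $\{J\subseteq\mathscr R_t: x\leq x_J\}$ interacts with the simple roots in $\mathscr R_t$ so that the complex splits into a direct sum of full Boolean cubes. To prove this, I would first show that the simple roots in $\mathscr R_t$ are pairwise orthogonal, so they give commuting, independent directions. I would then use Lemma~\ref{lem:root-bruhat-ideals}, identifying Bruhat order on $\W^{\m}$ with inclusion of inversion sets, to check that the condition $x\leq x_J$ is determined by the non-simple part $J\setminus\Pi$ together with an upward-closed set of allowed simple roots. If the cube structure holds only up to a filtration, the fallback is a spectral sequence argument: filter by $|J\setminus\Pi|$, which leaves Koszul complexes on the simple directions.
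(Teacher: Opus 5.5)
Your proposal follows the paper's proof. Splitting $\operatorname{Gr}^W_p$ of the Grothendieck--Cousin complex by the label $t$, indexing the copies of $L_t$ by subsets $J\subseteq\mathscr R_t$ through Lemma~\ref{lem:am-column}, matching the nonzero differential components with deletions of simple roots, and showing that each component is a Boolean cube, contractible unless its terminal vertex $(y,\Omega)$ has $\mathscr I(x,y,\Omega)=\varnothing$, is exactly what Lemmas~\ref{lem:lc-deletion} and~\ref{lem:lc-boolean} do. The paper settles the step you flag with the inversion-set argument you propose: for $B=J\setminus\Pi$ and $S\subseteq\mathscr R_t\cap\Pi$ one has $\Phi_{x_{B\cup S}}=\Phi_{x_B}\setminus\{-x_B\alpha\mid\alpha\in S\}$, so $x\leq x_{B\cup S}$ if and only if $x\leq x_Bs_\alpha$ for every $\alpha\in S$. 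Hence the admissible $S$ are exactly the subsets of $\mathscr I(x,x_B,\Omega_B)$, a family that is downward closed (not upward closed), and no spectral-sequence fallback is needed, apart from the one-line check $t_{ys_\alpha}(\Omega\cup\{\alpha\})=t$, which places every $\alpha\in\mathscr I(x,y,\Omega)$ in $\mathscr R_t$.
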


\noindent The argument proceeds as follows. Using notation as in Section \ref{subsec:am-geometric-conventions}, we construct a decomposition:
\begin{equation}\label{eq:directDecomp}
\operatorname{Gr}^W_p\mathscr{GC}_x^{\bullet}    =\bigoplus_{q\geq \ell(x)}\bigoplus_{(y,\Omega)\in \mathscr{Y}_p(x;q)} \mathscr{K}_{x,y,\Omega}^{\bullet}.
\end{equation}
We prove Theorem \ref{thm:local-cohomology} by showing that $\mathscr{K}_{x,y,\Omega}^{\bullet}$ is contractible unless $\mathscr I(x,y,\Omega)=\varnothing$, and 
\begin{equation}
\mathscr{K}_{x,y,\Omega}^{\bullet}\cong \operatorname{IC}_{Z_{t_y(\Omega)}}^H
\left(\frac{d_X-\ell(t_y(\Omega))-p}{2}\right)[-q],\quad \textnormal{for $\mathscr I(x,y,\Omega)=\varnothing$.}    
\end{equation}
This argument generalizes the one in \cite{schubertLC}*{Section 4.3}.

The local cohomological defect of a subvariety $Z\subseteq X$ of pure codimension $c$ is defined via
\begin{equation}
\operatorname{lcdef}(Z)=\max\{q-c\mid \mathscr{H}^q_Z(\mathscr{O}_X)\neq 0\}.
\end{equation}
The following is an immediate application of Theorem \ref{thm:local-cohomology}.

\begin{cor}
For $Z_x\subseteq X$, the local cohomological defect is 
\[
\operatorname{lcdef}(Z_x)
=\max\left\{
q-\ell(x)\ \middle|\
\begin{gathered}
(y,\Omega)\in\mathscr Y_p(x;q)
\textnormal{ for some }p,\\
\mathscr I(x,y,\Omega)=\varnothing
\end{gathered}
\right\}.
\]
\end{cor}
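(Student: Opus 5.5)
The corollary follows directly from Theorem~\ref{thm:local-cohomology}; the plan is to unwind the definition of $\operatorname{lcdef}$. The Schubert variety $Z_x$ is irreducible of codimension $c=\ell(x)$, so
\[
\operatorname{lcdef}(Z_x)=\max\{q-\ell(x)\mid \mathscr H^q_{Z_x}(\mathscr O_X)\neq0\}.
\]
The problem therefore reduces to deciding, for each $q$, whether $\mathscr H^q_{Z_x}(\mathscr O_X)$ vanishes, in terms of the index sets of the theorem.

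First, the weight filtration $W_\bullet$ on $\mathscr H^q_{Z_x}(\mathscr O_X^H)$ is finite and exhaustive, by holonomic $\mathscr D_X$-submodules. Hence the underlying $\mathscr D_X$-module is nonzero if and only if $\operatorname{Gr}^W_p\mathscr H^q_{Z_x}(\mathscr O_X^H)\neq0$ for some $p\in\mathbb Z$.

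Second, by \eqref{eq:lc-main-formula}, each graded piece is a direct sum of Tate twists of $\operatorname{IC}_{Z_{t_y(\Omega)}}^H$. Each summand has nonzero underlying module $\operatorname{IC}_{Z_{t_y(\Omega)}}$, since the intersection cohomology module of a nonempty irreducible subvariety is a nonzero simple module. Consequently $\operatorname{Gr}^W_p\mathscr H^q_{Z_x}\neq0$ exactly when there exists $(y,\Omega)\in\mathscr Y_p(x;q)$ with $\mathscr I(x,y,\Omega)=\varnothing$.

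Combining the two steps, $\mathscr H^q_{Z_x}(\mathscr O_X)\neq0$ if and only if such a pair exists for some $p$. Taking the maximum of $q-\ell(x)$ over these $q$ gives the displayed formula.

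The set is nonempty, so the maximum is attained. Indeed, take $q=\ell(x)$, $y=x$, $\Omega=\varnothing$, and $p=d_X+\ell(x)$. Then $\Omega_y^+=\varnothing$, and $\mathscr I(x,x,\varnothing)=\varnothing$ because no $\alpha$ satisfies $x\le xs_\alpha<x$. This matches the nonvanishing of $\mathscr H^{c}_{Z_x}$. The set is also finite, since $\W^{\m}$ and each $\mathscr E_y$ are finite.

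There is no real obstacle here. The only point needing care is that the summands on the right of \eqref{eq:lc-main-formula} have nonzero underlying modules, so that nonemptiness of the index set is equivalent to nonvanishing.
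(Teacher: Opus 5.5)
Your proposal is correct and matches the paper, which presents the corollary as an immediate consequence of Theorem~\ref{thm:local-cohomology}: since the weight filtration is finite and each summand in \eqref{eq:lc-main-formula} has nonzero underlying module, $\mathscr H^q_{Z_x}(\mathscr O_X)\neq0$ exactly when some $(y,\Omega)\in\mathscr Y_p(x;q)$ with $\mathscr I(x,y,\Omega)=\varnothing$ exists. Your check that the set is nonempty is a useful explicit detail the paper leaves implicit: the pair $(x,\varnothing)$ lies in it and yields the lowest weight piece $\operatorname{IC}^H_{Z_x}(-\ell(x))$, so the maximum is well defined.
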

\noindent Below, we calculate $\operatorname{lcdef}(Z_x)$ for the pairs $(\mathsf{B}_n,\mathsf{B}_{n-1})$, $(\mathsf{D}_n,\mathsf{D}_{n-1})$, $(\mathsf{E}_6,\mathsf{D}_5)$, $(\mathsf{E}_7,\mathsf{E}_6)$.

\subsection{Proof of the local cohomology formula} Before we prove the main result, we record a couple lemmas that will be used to establish the direct sum decomposition (\ref{eq:directDecomp}).

\begin{lemma}\label{lem:lc-deletion}
Let $x\in\W^{\m}$, $\Psi\in\mathscr E_x$, and
$\alpha\in\Psi_x^+\cap\Pi$. Then $y=xs_\alpha$ belongs to
$\W^{\m}$ and $\ell(y)=\ell(x)+1$. Define
\begin{equation}\label{eq:lc-root-deletion}
D_\alpha\Psi
=\begin{cases}
\Psi\setminus\{\alpha\},&\text{if $\alpha$ is maximal in $\Psi$},\\
\Psi,&\text{otherwise},
\end{cases}
\end{equation}
where maximality is taken in the root order. Then
\begin{equation}\label{eq:lc-deletion-identities}
D_\alpha\Psi\in\mathscr E_y,\qquad
(D_\alpha\Psi)_y^+=\Psi_x^+\setminus\{\alpha\},\qquad
t_y(D_\alpha\Psi)=t_x(\Psi).
\end{equation}
In particular, $D_\alpha\Psi$ is the unique member of
$\mathscr E_y$ with this simple label $t_x(\Psi)$.
\end{lemma}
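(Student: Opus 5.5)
The plan is to reduce everything to the ``column'' description of Lemma~\ref{lem:am-column}, exactly as in the proof of Proposition~\ref{prop:am-common}. First, since $\alpha\in\Psi_x^+$ we have $x\alpha\in\Phi(\mathfrak u)$. Lemma~\ref{lem:am-root-conventions}\textup{(iv)} then gives $y=xs_\alpha\in\W^{\m}$ and $\ell(y)=\ell(x)+1$.

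Put $t=t_x(\Psi)$ and $J=\Psi_x^+$, so that $\alpha\in J$. The converse half of the proof of Lemma~\ref{lem:am-column} shows that $\Psi\subseteq\mathscr R_t$, that $J\subseteq\mathscr R_t$, and that $x=x_J$. By Lemma~\ref{lem:am-column}, $\Omega_J\in\mathscr E_{x_J}=\mathscr E_x$ has label $t_x(\Omega_J)=t=t_x(\Psi)$. The map \eqref{eq:admissible-factors} is injective, so
\[
\Psi=\Omega_J=\{\eta\in\mathscr R_t\mid \eta\le\gamma\text{ for some }\gamma\in J\}.
\]
This identification is the key step: it converts the abstract set $\Psi$ into the explicit downward closure of $J$ inside $\mathscr R_t$.

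Next, I repeat the computation from the proof of Proposition~\ref{prop:am-common}. Write $t\prod_{\gamma\in J}s_\gamma=vx$ with $v\in\W_{\m}$. Then
\[
t\prod_{\gamma\in J\setminus\{\alpha\}}s_\gamma=vxs_\alpha=vy,
\]
so $x_{J\setminus\{\alpha\}}=y$. Lemma~\ref{lem:am-column} applied to $J\setminus\{\alpha\}$ gives
\[
\Omega_{J\setminus\{\alpha\}}\in\mathscr E_y,\qquad
(\Omega_{J\setminus\{\alpha\}})^+_y=J\setminus\{\alpha\}=\Psi_x^+\setminus\{\alpha\},\qquad
t_y(\Omega_{J\setminus\{\alpha\}})=t.
\]
So all three identities in \eqref{eq:lc-deletion-identities} follow once we show $\Omega_{J\setminus\{\alpha\}}=D_\alpha\Psi$. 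The final uniqueness assertion then follows from injectivity of $\Omega\mapsto t_y(\Omega)$ on $\mathscr E_y$.

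To prove $\Omega_{J\setminus\{\alpha\}}=D_\alpha\Psi$, I use that $\alpha$ is simple. The only positive root $\eta\le\alpha$ is $\alpha$ itself, so
\[
\Omega_J=\Omega_{J\setminus\{\alpha\}}\cup\{\alpha\}.
\]
Hence $\Omega_{J\setminus\{\alpha\}}$ equals either $\Psi$ or $\Psi\setminus\{\alpha\}$, and it equals $\Psi$ exactly when $\alpha\le\gamma$ for some $\gamma\in J\setminus\{\alpha\}$. There are two cases.
\begin{enumerate}
\item If $\alpha$ is not maximal in $\Psi$, choose $\psi\in\Psi$ with $\alpha<\psi$. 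Condition (c) for $\Psi\in\mathscr E_x$ gives $\zeta\in\Psi$ with $\psi\le\zeta$ and $x\zeta\in\Phi(\mathfrak u)$. Then $\zeta\in J$ and $\zeta\ne\alpha$, so $\alpha\in\Omega_{J\setminus\{\alpha\}}$ and $\Omega_{J\setminus\{\alpha\}}=\Psi$.
\item If $\alpha$ is maximal in $\Psi$, then no $\gamma\in J\setminus\{\alpha\}\subseteq\Psi$ lies above $\alpha$. Hence $\Omega_{J\setminus\{\alpha\}}=\Psi\setminus\{\alpha\}$.
\end{enumerate}
In both cases $\Omega_{J\setminus\{\alpha\}}=D_\alpha\Psi$.

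I expect the only delicate point to be the identification $\Psi=\Omega_J$. It relies on the injectivity statement from \cite{intertwining} together with the fact, from Lemma~\ref{lem:am-column}, that $\Omega_J$ lies in $\mathscr E_x$ rather than merely in $\mathscr S_x$. The rest is bookkeeping with commuting orthogonal reflections.
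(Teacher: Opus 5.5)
Your proposal is correct and follows essentially the same route as the paper. Both identify $x=x_J$ and $\Psi=\Omega_J$ via Lemma~\ref{lem:am-column} and injectivity, obtain $y=x_{J\setminus\{\alpha\}}$ as in the proof of Proposition~\ref{prop:am-common}, and check $\Omega_{J\setminus\{\alpha\}}=D_\alpha\Psi$ using simplicity of $\alpha$. The only cosmetic difference is that in the non-maximal case you invoke condition (c) for $\Psi$, whereas the paper uses the definition of $\Omega_J$ directly; these are equivalent since $\Psi=\Omega_J$.
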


\begin{proof}
Since $x\alpha\in\Phi(\mathfrak{u})$,
Lemma~\ref{lem:am-root-conventions}\textup{(iv)} gives
$y=xs_\alpha\in\W^{\m}$ and $\ell(y)=\ell(x)+1$.
Put $t=t_x(\Psi)$ and $J=\Psi_x^+$.
By Lemma~\ref{lem:am-column} and the injectivity in
\eqref{eq:admissible-factors}, we have
$x=x_J$ and $\Psi=\Omega_J$.
The proof of Proposition~\ref{prop:am-common} gives
$y=x_{J\setminus\{\alpha\}}$. Since $\alpha$ is simple, the only positive root
$\eta\leq\alpha$ is $\alpha$ itself. Thus
\eqref{eq:am-column} gives $\Omega_J=\Omega_{J\setminus\{\alpha\}}\cup\{\alpha\}$. If $\alpha$ is maximal in $\Omega_J$, then
$\alpha\notin\Omega_{J\setminus\{\alpha\}}$, since otherwise
$\alpha<\gamma$ for some
$\gamma\in J\setminus\{\alpha\}\subseteq\Omega_J$.
Conversely, if $\alpha$ is not maximal in $\Omega_J$,
choose $\eta\in\Omega_J$ with $\alpha<\eta$.
By \eqref{eq:am-column}, there is $\gamma\in J$
with $\eta\leq\gamma$. Then $\gamma\neq\alpha$ and
$\alpha<\gamma$, so
$\alpha\in\Omega_{J\setminus\{\alpha\}}$.
Since $\Psi=\Omega_J$, these two cases give
\[
\Omega_{J\setminus\{\alpha\}}=D_\alpha\Psi.
\]
Applying Lemma~\ref{lem:am-column} to
$J\setminus\{\alpha\}$, with
$x_{J\setminus\{\alpha\}}=y$, gives
\eqref{eq:lc-deletion-identities}.
\end{proof}

For $x\in \W^{\m}$, form a directed graph $\mathscr{Q}_x$ with vertices $(z,\Psi)$, where
$z\geq x$ and $\Psi\in\mathscr E_z$, and edges
\begin{equation}
(z,\Psi)\longrightarrow(zs_\alpha,D_\alpha\Psi)
\qquad(\alpha\in\Psi_z^+\cap\Pi).
\end{equation}
This orientation agrees with the differential in $\mathscr{GC}_x^{\bullet}$.
By Lemma \ref{lem:lc-deletion}, the source and target of each edge in $\mathscr{Q}_x$ share the simple label $t_z(\Psi)$ and the weight
$d_X+\ell(z)+|\Psi_z^+|$.

\begin{lemma}\label{lem:lc-boolean}
Every connected component of $\mathscr Q_x$ has a unique terminal
vertex $(y,\Omega)$. Its vertices are exactly $(y_J,\Psi_J)$, where
\begin{equation}\label{eq:lc-cube-vertices}
y_J=y\prod_{\alpha\in J}s_\alpha,\qquad
\Psi_J=\Omega\cup J,\qquad
J\subseteq\mathscr I(x,y,\Omega),
\end{equation}
and its edges are
\begin{equation}\label{eq:lc-cube-edges}
(y_J,\Psi_J)\longrightarrow
(y_{J\setminus\{\alpha\}},\Psi_{J\setminus\{\alpha\}})
\qquad(\alpha\in J).
\end{equation}
\end{lemma}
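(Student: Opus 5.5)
The plan is to reduce everything to the ``column'' description of Lemma~\ref{lem:am-column}. Fix a connected component $\mathscr C$ of $\mathscr Q_x$. By Lemma~\ref{lem:lc-deletion}, the two endpoints of every edge share the simple label, so all vertices of $\mathscr C$ have a common label $t$. By Lemma~\ref{lem:am-column} and the injectivity in \eqref{eq:admissible-factors}, every vertex $(z,\Psi)$ with $t_z(\Psi)=t$ has the form $(x_J,\Omega_J)$ for a unique $J\subseteq\mathscr R_t$, namely $J=\Psi_z^+$.

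In this language the proof of Lemma~\ref{lem:lc-deletion} shows that the edge labelled $\alpha$ is
\[
(x_J,\Omega_J)\to(x_{J\setminus\{\alpha\}},\Omega_{J\setminus\{\alpha\}}),\qquad \alpha\in J\cap\Pi,
\]
and that $x_{J\setminus\{\alpha\}}=x_Js_\alpha>x_J$. Two consequences follow.
\begin{enumerate}
\item[(1)] The set $J\setminus\Pi$ is constant along edges, so it is an invariant of the component.
\item[(2)] From any vertex $J$ one can delete the simple roots of $J$ one at a time. Each deletion raises the length, so the condition $x_{J'}\geq x$ is preserved and we stay inside $\mathscr Q_x$.
\end{enumerate}
Put $K=J\setminus\Pi$. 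The vertex $K$ is terminal, since $K\cap\Pi=\varnothing$, and by (1) it is the only terminal vertex with invariant $K$. Hence each component has a unique terminal vertex $(y,\Omega)=(x_K,\Omega_K)$. Its vertices are exactly the $K\sqcup J''$ with $J''\subseteq\mathscr R_t\cap\Pi$ and $x_{K\sqcup J''}\geq x$.

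Next I identify these vertices with the ones in \eqref{eq:lc-cube-vertices}. Iterating $x_{J\setminus\{\alpha\}}=x_Js_\alpha$ and using that the reflections commute gives $x_{K\sqcup J''}=y\prod_{\alpha\in J''}s_\alpha=y_{J''}$. A simple root is minimal in the root order, so $\Omega_{K\sqcup J''}=\Omega_K\cup J''=\Omega\cup J''$. Reading these identities on the deletion edges recovers the edge description \eqref{eq:lc-cube-edges}.

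It remains to prove that $J''$ occurs (that is, $J''\subseteq\mathscr R_t$ and $x\le y_{J''}$) if and only if $J''\subseteq\mathscr I(x,y,\Omega)$.

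For ``only if'', take $\alpha\in J''$. The vertex $K\sqcup\{\alpha\}$ lies on a deletion path from $K\sqcup J''$, so it is a vertex of $\mathscr Q_x$. Lemma~\ref{lem:am-column} then gives $\Omega\cup\{\alpha\}\in\mathscr E_{ys_\alpha}$, together with $ys_\alpha\in\W^{\m}$ and $x\le ys_\alpha<y$. Hence $\alpha\in\mathscr I(x,y,\Omega)$.

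For ``if'', take $\alpha\in\mathscr I(x,y,\Omega)$ and set $\Psi=\Omega\cup\{\alpha\}$.
\begin{enumerate}
\item[(i)] Condition (a) for $\Psi$ forces $\alpha\perp\Omega$.
\item[(ii)] Since $ys_\alpha<y$, Lemma~\ref{lem:am-root-conventions}(iv) gives $ys_\alpha\alpha\in\Phi(\mathfrak u)$.
\item[(iii)] For $\gamma\in\Omega$, (i) gives $ys_\alpha\gamma=y\gamma$.
\end{enumerate}
Together these give $\Psi^+_{ys_\alpha}=K\cup\{\alpha\}$, and hence
\[
t_{ys_\alpha}(\Psi)=\overline{ys_\alpha s_\alpha\textstyle\prod_K s_\gamma}=t.
\]
The converse direction of Lemma~\ref{lem:am-column} then puts $\alpha$ in $\mathscr R_t$. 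Thus $\mathscr I(x,y,\Omega)\subseteq\mathscr R_t\cap\Pi$, and $K\sqcup J''$ is a legitimate index set.

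The main obstacle is the Bruhat condition $x\le y_{J''}$ for a whole subset $J''\subseteq\mathscr I$, when we only know $x\le ys_\alpha$ for each $\alpha\in J''$ separately. For this I will use Lemma~\ref{lem:root-bruhat-ideals}(ii),(iii): on $\W^{\m}$, Bruhat order is inclusion of inversion sets. Each step down satisfies $\Phi_{y s_\alpha}=\Phi_y\setminus\{ys_\alpha\alpha\}$. The roots $ys_\alpha\alpha=-y\alpha$ for $\alpha\in J''$ are distinct, and orthogonality of $J''$ lets the removals be performed independently, so
\[
\Phi_{y_{J''}}=\Phi_y\setminus\{-y\alpha:\alpha\in J''\}.
\]
Then $\Phi_x\subseteq\Phi_{ys_\alpha}$ for every $\alpha\in J''$ gives $\Phi_x\subseteq\Phi_{y_{J''}}$, that is, $x\le y_{J''}$. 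I will check carefully that the iterated descents stay in $\W^{\m}$ and remain weak-order covers, using $x_{J\setminus\{\alpha\}}=x_Js_\alpha$ at each step.
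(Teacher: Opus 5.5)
Your proposal is correct and is essentially the paper's proof. Both work in the coordinates $(x_J,\Omega_J)$ of Lemma~\ref{lem:am-column} and use the deletion edges $x_{J\setminus\{\alpha\}}=x_Js_\alpha$. Both identify $\mathscr I(x,y,\Omega)$ with $\{\alpha\in\mathscr R_t\cap\Pi\mid x\le ys_\alpha\}$ via $t_{ys_\alpha}(\Omega\cup\{\alpha\})=t$, and both settle $x\le y_J$ through $\Phi_{y_J}=\Phi_y\setminus\{-y\alpha\mid\alpha\in J\}$. Your invariant $J\setminus\Pi$ just repackages the paper's step that the cube is closed under adjacency and every vertex flows to a terminal one.

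One small fix: $\alpha\in\Omega$ genuinely occurs (the non-maximal case of $D_\alpha$). So in your steps (i) and (iii) you should write $\Omega\setminus\{\alpha\}$, as the paper does. The conclusion $\Psi^+_{ys_\alpha}=K\sqcup\{\alpha\}$ is unaffected.
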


\begin{proof}
Let $(y,\Omega)$ be a terminal pair belonging to $\mathscr Q_x$.
We determine its connected component. Put
\[
t=t_y(\Omega),\qquad B=\Omega_y^+,\qquad
S=\mathscr R_t\cap\Pi.
\]
By Lemma~\ref{lem:am-column} and \eqref{eq:admissible-factors},
we have $(y,\Omega)=(x_B,\Omega_B)$.
Since $(y,\Omega)$ is terminal, $B\cap\Pi=\varnothing$.

The proof of Lemma~\ref{lem:lc-deletion} gives, for
$K\subseteq\mathscr R_t$ and $\alpha\in K\cap\Pi$,
\begin{equation}\label{eq:lc-column-deletion}
x_Ks_\alpha=x_{K\setminus\{\alpha\}}>x_K,
\qquad
D_\alpha\Omega_K=\Omega_{K\setminus\{\alpha\}}.
\end{equation}
The roots in $\mathscr R_t$ are pairwise orthogonal, so their
reflections commute. Starting with $x_B=y$, induction on $|J|$
using \eqref{eq:lc-column-deletion} gives
$x_{B\cup J}=y\prod_{\alpha\in J}s_\alpha$.
Since the only positive root below a simple root $\alpha$
is $\alpha$ itself, \eqref{eq:am-column} also gives
$\Omega_{B\cup J}=\Omega_B\cup J=\Omega\cup J$.
Consequently,
\[
(x_{B\cup J},\Omega_{B\cup J})
=
\left(y\prod_{\alpha\in J}s_\alpha,\ \Omega\cup J\right)
\qquad(J\subseteq S).
\]
We next show that these pairs belong to $\mathscr Q_x$
exactly when $J\subseteq\mathscr I(x,y,\Omega)$.

We claim that
\begin{equation}\label{eq:lc-directions-column}
\mathscr I(x,y,\Omega)
=\{\alpha\in S:x\leq ys_\alpha\}.
\end{equation}
For $\alpha\in S$, taking $J=\{\alpha\}$ above gives
$ys_\alpha\in\W^{\m}$, $ys_\alpha<y$, and
$\Omega\cup\{\alpha\}\in\mathscr E_{ys_\alpha}$.
If also $x\leq ys_\alpha$, then
$\alpha\in\mathscr I(x,y,\Omega)$ by \eqref{eq:lc-directions}. Conversely, let $\alpha\in\mathscr I(x,y,\Omega)$ and put
$z=ys_\alpha$ and $\Psi=\Omega\cup\{\alpha\}$.
By definition, $x\leq z<y$ and $\Psi\in\mathscr E_z$.
Lemma~\ref{lem:am-root-conventions}\textup{(iv)} gives
$z\alpha=-y\alpha\in\Phi(\mathfrak u)$.
Since $\Psi$ is pairwise orthogonal, $s_\alpha$ fixes
$\Omega\setminus\{\alpha\}$. Together with $\Omega_y^+=B$,
this gives $\Psi_z^+=B\sqcup\{\alpha\}$. Thus,
\eqref{eq:admissible-label} gives
\[
t_z(\Psi)
=\overline{ys_\alpha s_\alpha\prod_{\gamma\in B}s_\gamma}
=\overline{y\prod_{\gamma\in B}s_\gamma}=t.
\]
Lemma~\ref{lem:am-column} gives
$B\cup\{\alpha\}\subseteq\mathscr R_t$, so $\alpha\in S$.
This proves \eqref{eq:lc-directions-column}.

We next determine when $x\leq y_J$ for $J\subseteq S$.
For $\alpha\in J$, \eqref{eq:lc-column-deletion} gives
$y_Js_\alpha=y_{J\setminus\{\alpha\}}>y_J$.
The reflections indexed by $J\setminus\{\alpha\}$ fix $\alpha$,
so $y_J\alpha=-y\alpha$. Thus \eqref{eq:root-cover-ideal} gives
\[
\Phi_{y_J}
=\Phi_{y_{J\setminus\{\alpha\}}}\setminus\{-y\alpha\}.
\]
Induction on $|J|$ gives
\begin{equation}\label{eq:lc-face-ideal}
\Phi_{y_J}=\Phi_y\setminus\{-y\alpha\mid \alpha\in J\}.
\end{equation}
By Lemma~\ref{lem:root-bruhat-ideals},
$x\leq y$ gives $\Phi_x\subseteq\Phi_y$.
Thus \eqref{eq:lc-face-ideal} gives
\[
x\leq y_J
\quad\Longleftrightarrow\quad
\Phi_x\subseteq\Phi_{y_J}
\quad\Longleftrightarrow\quad
-y\alpha\notin\Phi_x\text{ for every }\alpha\in J.
\]
For $\alpha\in S$, applying \eqref{eq:lc-face-ideal}
with $J=\{\alpha\}$ gives
$\Phi_{ys_\alpha}=\Phi_y\setminus\{-y\alpha\}$.
Since $\Phi_x\subseteq\Phi_y$, the same lemma shows that
$-y\alpha\notin\Phi_x$ is equivalent to $x\leq ys_\alpha$.
Consequently, \eqref{eq:lc-directions-column} gives
\[
x\leq y_J
\quad\Longleftrightarrow\quad
x\leq ys_\alpha\text{ for every }\alpha\in J
\quad\Longleftrightarrow\quad
J\subseteq\mathscr I(x,y,\Omega).
\]

Every edge preserves the simple label $t$.
By Lemma~\ref{lem:am-column} and \eqref{eq:lc-column-deletion},
a vertex adjacent to $(x_{B\cup J},\Omega_{B\cup J})$
can only be obtained by adjoining or deleting a root of $S$
from $J$. Hence no edge of $\mathscr Q_x$ joins the pairs
in \eqref{eq:lc-cube-vertices} to any other vertex.
Their edges are precisely \eqref{eq:lc-cube-edges},
which connect every $J$ to $\varnothing$.
Thus they form the connected component of $(y,\Omega)$,
and $(y,\Omega)$ is its unique terminal vertex.

Finally, every edge of $\mathscr Q_x$ increases
the length of the Weyl group index by one, so every vertex
has a path to a terminal vertex.
Therefore every vertex belongs to one of the components
above.
\end{proof}

\begin{proof}[Proof of Theorem~\ref{thm:local-cohomology}]
Fix $x\in\W^{\m}$ and $p\in\mathbb Z$. For $t\in\W^{\m}$ with $d_X-\ell(t)\equiv p\pmod2$,
we use the shorthand
\begin{equation}\label{eq:lc-tate-twist}
\mathscr L_{t,p}
=\operatorname{IC}_{Z_t}^H
\left(\frac{d_X-\ell(t)-p}{2}\right).
\end{equation}
By \eqref{eq:lc-cousin-terms}, \eqref{eq:am-weight-layers}, and
localization, the simple summands of
$\operatorname{Gr}^W_p\mathscr{GC}_x^\bullet$ are indexed by the
vertices $(z,\Psi)$ of $\mathscr Q_x$ satisfying
\[
d_X+\ell(z)+|\Psi_z^+|=p.
\]
Lemmas~\ref{lem:lc-deletion} and \ref{lem:lc-boolean} partition
these vertices into connected components indexed by
$(y,\Omega)\in\mathscr Y_p(x;q)$ for $q\geq\ell(x)$.
Under localization, \eqref{eq:adjacent-weight-image},
\eqref{eq:am-common-positive-root}, and
\eqref{eq:lc-deletion-identities} identify the nonzero differential
components with the edges of $\mathscr Q_x$. Each is an
isomorphism between simple pure Hodge modules. Therefore, the
sum of the simple summands indexed by the connected component
of $(y,\Omega)$ is a subcomplex, which we denote by
$\mathscr K_{x,y,\Omega}^\bullet$.
This proves the decomposition \eqref{eq:directDecomp}.

For $(y,\Omega)\in\mathscr Y_p(x;q)$, put
$I=\mathscr I(x,y,\Omega)$ and $t=t_y(\Omega)$.
By \eqref{eq:lc-cube-vertices} and Lemma~\ref{lem:lc-deletion},
the summand indexed by $J\subseteq I$ is $\mathscr L_{t,p}$
and occurs in degree $q-|J|$. Thus
\begin{equation}\label{eq:lc-cube-terms}
\mathscr K_{x,y,\Omega}^j
\cong\bigoplus_{\substack{J\subseteq I\\|J|=q-j}}
\mathscr L_{t,p}.
\end{equation}

If $I=\varnothing$, \eqref{eq:lc-cube-terms} gives
$\mathscr K_{x,y,\Omega}^{\bullet}\cong\mathscr L_{t,p}[-q]$. If $I\ne\varnothing$, Theorem~\ref{thm:adjacentmaps} and
Lemma~\ref{lem:lc-boolean} identify
$\mathscr K_{x,y,\Omega}^{\bullet}$ with $\mathscr L_{t,p}$ tensored with the Koszul complex on
$(1)_{\alpha\in I}$ over $\mathbb Q$, shifted by $[-q]$.
Hence it is contractible. Taking cohomology in
\eqref{eq:directDecomp}, strictness of $W_\bullet$ and
\eqref{eq:lc-cousin-terms} give \eqref{eq:lc-main-formula}.
\end{proof}

\subsection{Hodge rational homology level}

Let $Z\subseteq X$ be a closed subvariety of pure codimension $c$,
and write $F_\bullet$ for the Hodge filtration on $\mathscr H_Z^q(\mathscr O_X^H)$. The lowest weight piece of
$\mathscr H_Z^c(\mathscr O_X^H)$ is
$\operatorname{IC}_Z^H(-c)$. The Hodge rational homology level
of \cites{DORhrh, ParkPopa}, written $\operatorname{HRH}(Z)$, is an invariant
that compares the Hodge filtration $F_{\bullet}$ on local cohomology to that
on this submodule. Formally,
\begin{equation}\label{eq:hrh-criterion}
 \operatorname{HRH}(Z)\geq k\quad\Longleftrightarrow\quad
 F_k\mathscr H_Z^c(\mathscr O_X^H)
   =F_k\operatorname{IC}_Z^H(-c)\textnormal{ and }
 F_k\mathscr H_Z^q(\mathscr O_X^H)=0\textnormal{ for $q>c$, }
\end{equation}
where the equality is under the natural inclusion.
We set $\operatorname{HRH}(Z)=-1$ if the condition fails for
$k=0$, and $\operatorname{HRH}(Z)=+\infty$ if it holds for
every $k$. 

Put $c=\ell(x)$. By Theorem \ref{thm:local-cohomology}, $Z_x$ is a
rational homology manifold if and only if, for all $q\geq 0$ and $p\geq c+d_X+1$, we have $\mathscr{I}(x,y,\Omega)\neq \varnothing$ for all $(y,\Omega)\in \mathscr{Y}_p(x;q)$. For a survey of the rational homology manifold property on compact Hermitian symmetric spaces, see \cite[Sections 4.3, 5.3]{EHP}. We refine this by describing the Hodge rational homology level for Schubert varieties.

\begin{cor}\label{cor:hrh}
For $x\in\W^{\m}$, put $c=\ell(x)$. Then
\begin{equation}\label{eq:hrh-formula}
 \operatorname{HRH}(Z_x)
 =\min\left\{
       \frac{\ell(t_y(\Omega))-\ell(y)-|\Omega_y^+|}{2}-1 \;\middle|\; \substack{q\geq c,\;\;p\geq c+d_X+1\\ (y,\Omega)\in  \mathscr{Y}_p(x;q)\\
                         \mathscr I(x,y,\Omega)=\varnothing} \right\},
\end{equation}
with the convention that the minimum of the empty set is $+\infty$.
\end{cor}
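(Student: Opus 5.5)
The plan is to read off the Hodge filtration from the weight-graded description in Theorem~\ref{thm:local-cohomology}. We use the bistrictness of morphisms of mixed Hodge modules, meaning that $F_\bullet$ induces exact filtrations on the $\operatorname{Gr}^W_p$. Throughout, put $c=\ell(x)$ and fix the convention $F_j M(k)=F_{j-k}M$ for Tate twists. With this convention and $F_0\mathscr O_X=\mathscr O_X$, the lowest nonzero Hodge piece of $\operatorname{IC}_{Z_t}^H$ sits in index $\ell(t)=\operatorname{codim}Z_t$. This is the standard normalization, checked against the fact that $\operatorname{IC}_{Z}^H(-c)\subseteq\mathscr H^c_Z(\mathscr O_X^H)$ has lowest Hodge piece $F_0$.

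\textbf{Step 1: reduce the criterion to graded pieces.} In \eqref{eq:lc-main-formula} the unique summand of weight $d_X+c$ comes from $(y,\Omega)=(x,\varnothing)$, and it is $\operatorname{IC}_{Z_x}^H(-c)=W_{d_X+c}\mathscr H^c_{Z_x}$. Every summand of $\mathscr H^q_{Z_x}$ for $q>c$ has weight $p=d_X+q+|\Omega_y^+|\geq c+d_X+1$. Since $F$ is strict with respect to $W$, the equality $F_k\mathscr H^c=F_k W_{d_X+c}\mathscr H^c$ holds iff $F_k\operatorname{Gr}^W_p\mathscr H^c=0$ for all $p>d_X+c$. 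Likewise, $F_k\mathscr H^q=0$ iff $F_k\operatorname{Gr}^W_p\mathscr H^q=0$ for all $p$. Hence by \eqref{eq:hrh-criterion}, $\operatorname{HRH}(Z_x)\geq k$ iff $F_k$ vanishes on every summand $\mathscr L_{t,p}$ of \eqref{eq:lc-main-formula} with $q\geq c$, $p\geq c+d_X+1$, $(y,\Omega)\in\mathscr Y_p(x;q)$ and $\mathscr I(x,y,\Omega)=\varnothing$.

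\textbf{Step 2: compute the lowest Hodge index of each summand.} For such a summand, $t=t_y(\Omega)$ and $p=d_X+\ell(y)+|\Omega_y^+|$, so the Tate twist is
\[
k_0=\frac{d_X-\ell(t)-p}{2}=-\frac{\ell(t)+\ell(y)+|\Omega_y^+|}{2}.
\]
Then $F_j\mathscr L_{t,p}=F_{j-k_0}\operatorname{IC}_{Z_t}^H$. This is nonzero iff $j-k_0\geq\ell(t)$, that is, iff
\[
j\geq\frac{\ell(t)-\ell(y)-|\Omega_y^+|}{2}.
\]
So $F_k\mathscr L_{t,p}=0$ iff $k\leq\frac{\ell(t)-\ell(y)-|\Omega_y^+|}{2}-1$. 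Taking the largest $k$ satisfying this for all relevant $(y,\Omega)$ gives exactly the minimum in \eqref{eq:hrh-formula}.

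\textbf{Step 3: boundary conventions.} If the indexing set is empty, the condition holds for all $k$, so $\operatorname{HRH}=+\infty$. If the minimum is $-1$, the condition already fails at $k=0$, which matches the convention $\operatorname{HRH}=-1$. We must also check that the minimum is never below $-1$, i.e. that $\ell(t)\geq\ell(y)+|\Omega_y^+|$, equivalently $p\geq d_X-\ell(t)+\ldots$ being consistent with non-negativity of Hodge indices. I expect this to follow from the weight bound: the weight of a summand of $\mathscr H^q$ is at most $d_X+q$ plus the amount allowed by the twist, using $F_{<0}=0$ on local cohomology of $\mathscr O_X^H$.

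The main obstacle is pinning down the normalization in Step 2, namely the lowest Hodge index of $\operatorname{IC}_{Z_t}^H$ and the sign convention for twists. I would verify it on the smooth case $\mathscr H^c_{O_x}(\mathscr O_X^H)$ restricted to the open cell, where everything is explicit, and then use that the lowest Hodge piece of $\operatorname{IC}_Z^H$ is generically determined.
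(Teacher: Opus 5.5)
Your proposal is correct and is essentially the paper's argument. You read off the lowest Hodge index $\ell(t)+k$ of each twisted constituent $\operatorname{IC}_{Z_t}^H(k)$ in Theorem~\ref{thm:local-cohomology}, use that $\operatorname{IC}_{Z_x}^H(-c)$ is the only constituent of weight $\leq c+d_X$, and conclude via strictness and \eqref{eq:hrh-criterion}; your Steps 1 and 3 just spell out what the paper leaves implicit.

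The boundary check you flag in Step 3 holds for the reason you suggest. Each constituent is a subquotient of $\mathscr H^q_{Z_x}(\mathscr O_X^H)$ with the induced Hodge filtration, and $F_{-1}\mathscr H^q_{Z_x}(\mathscr O_X^H)=0$. Hence $\frac{\ell(t_y(\Omega))-\ell(y)-|\Omega_y^+|}{2}\geq 0$, and the minimum is never below $-1$.
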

\begin{proof}
For $t\in \W^{\m}$, the first nonzero Hodge piece of
$\operatorname{IC}_{Z_t}^H(k)$ has index $\ell(t)+k$ (see, for instance, \cite{PRdet}*{Lemma 2.1}). Thus the constituent
indexed by $(y,\Omega)$ in \eqref{eq:lc-main-formula} starts in level
\begin{equation}\label{eq:hrh-first-level}
\frac{\ell(t_y(\Omega))-\ell(y)-|\Omega_y^+|}{2}.
\end{equation}
By strictness of $F_\bullet$ and \eqref{eq:hrh-criterion}, this verifies (\ref{eq:hrh-formula}), noting that $\operatorname{IC}^H_{Z_x}(-c)$ is the only composition factor of local cohomology with weight $\leq c+d_X$.
\end{proof}

\section{The pairs $(\mathsf{C}_n,\mathsf{A}_{n-1})$ and $(\mathsf{D}_n,\mathsf{A}_{n-1})$}\label{sec:shifted-pairs}

In Section~\ref{sec:shifted-proof}, we prove Theorem~\ref{thm:sh-local-cohomology}.
In Section~\ref{sec:matrix-applications}, we restrict to the
opposite big cell to calculate local cohomology supported in
symmetric determinantal and Pfaffian varieties, recovering
the composition factors of \cite{raicu2016local} and
determining the weight filtration.

\subsection{Proof of Theorem~\ref{thm:sh-local-cohomology}}
\label{sec:shifted-proof}

We use the notation of Section~\ref{sec:shifted-shapes}.
Put $\delta_m=(m,m-1,\ldots,1)$, and write $\mc S_m$ for
the strict partitions contained in $\delta_m$, including
the empty partition. Their shifted diagrams are precisely
the order ideals of $\delta_m$ in the coordinatewise order.
We say that a box $z\in\b$ is a \defi{removable corner} of $\b$ if
$\b\setminus\{z\}\in\mc S_m$, and a box
$z\in\delta_m\setminus\b$ is an \defi{addable corner} of $\b$
if $\b\cup\{z\}\in\mc S_m$. 

Every completed strip $\theta_s$ that occurs in a shifted Dyck shape $\eta$ has odd
size. Indeed, with notation as in \eqref{eq:sh-completion},
the subpath $\{x_1,\ldots,x_r\}\subseteq \theta$ has endpoints of equal level,
so $r$ is odd. The condition in \eqref{eq:sh-dyck-recursion}
says that $t-r$ is even. Thus $|\theta|=t$ and
$|\theta_s|=2t-r$ are odd, and hence
\begin{equation}\label{eq:sh-depth-parity}
|\eta|\equiv\operatorname{dp}(\eta)\pmod2.
\end{equation}
In particular, if $\eta$ has no singleton strips, then $|\theta|\geq3$ for all almost Dyck paths $\theta$ in $\eta$.

\begin{lemma}\label{lem:sh-corner-removal}
Let $\underline c\subseteq\b\in\mc S_m$ and suppose
$\eta=\b/\underline c\in\operatorname{sDyck}(\b)$.
Let $S(\eta)$ be the set of boxes removed from $\eta$ as singleton strips
in \eqref{eq:sh-dyck-recursion}. Then
\begin{equation}\label{eq:sh-corner-removal}
S(\eta)=\left\{z\in\eta\ \middle|\
\begin{gathered}
\b\setminus\{z\}\in\mc S_m,\\
\eta\setminus\{z\}\in
\operatorname{sDyck}(\b\setminus\{z\})
\end{gathered}\right\}.
\end{equation}
\end{lemma}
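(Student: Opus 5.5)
The plan is to prove both inclusions in \eqref{eq:sh-corner-removal} by induction on $|\eta|$, following the recursion \eqref{eq:sh-dyck-recursion} one stage at a time. First I would pin down what a singleton strip is. If $\theta_s=\{z\}$, then $t=r=1$ and $E(\theta)=\varnothing$. I claim the connected component of the current shape containing $z$ is then just $\{z\}$. In a connected skew shifted shape, each box $w\neq z$ can be joined to $z$ by moves North or West that stay in the shape. The first box on such a walk whose NW diagonal neighbour is absent lies in $\theta$. So if $\theta=\{z\}$, every other box would force its NW neighbour into the component, and following these NW neighbours produces a second border box, a contradiction. Hence singleton strips are exactly isolated boxes of some intermediate shape in the recursion.

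Second, I would record a monotonicity property of the recursion. If $\eta$ is connected with border strip $\theta$, then $\eta\setminus\theta_s$ lies weakly to the southeast of $\theta_s$: each of its boxes has its NW neighbour in $\eta$, and this neighbour lies in $\theta$ or deeper. It follows that a box removed at a later stage has no box of $\eta$ removed earlier directly East or South of it, except possibly boxes of its own strip.

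Combining these, let $z\in S(\eta)$. Then $z$ is isolated when it is removed. The boxes East and South of $z$ are not in the shape at that stage, could not have been removed earlier by monotonicity, and are not removed later because later strips lie southeast of earlier ones in a way that forbids a box directly adjacent to an already-isolated box. So these boxes are not in $\b$ at all, and $z$ is a removable corner of $\b$. Moreover, deleting $z$ from $\eta$ deletes a whole component at that stage and leaves every other step of the recursion unchanged. Hence $\eta\setminus\{z\}\in\operatorname{sDyck}(\b\setminus\{z\})$, which gives the inclusion $\subseteq$.

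For $\supseteq$, suppose $z\in\eta$ is a removable corner of $\b$ and $\eta\setminus\{z\}$ is shifted Dyck. Let $\theta_s$ be the completed strip of the recursion for $\eta$ that contains $z$, and assume for contradiction that $|\theta_s|\geq3$. Since $z$ is a removable corner, it is either the NE endpoint $x_1$ of $\theta$, the SW endpoint $x_t$, or a box of $E(\theta)$ at the southeast end. I would compare the recursions for $\eta$ and $\eta\setminus\{z\}$. All stages before the one containing $z$ agree, because $z$ lies southeast of them. At the stage containing $z$, the two recursions diverge in one of three ways.

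\begin{enumerate}
\item If $z=x_1$, the new border strip starts one level lower. It then either fails to be almost Dyck, or its value of $r$ changes by one, which changes the parity of $|\theta_s\setminus\theta|$.
\item If $z=x_t$ with $t>r$, the completion loses one box of $E(\theta)$ while the path loses $x_t$. The parity condition then fails, or the completion leaves the shape.
\item If $z\in E(\theta)$, the path is unchanged but $\theta_s\not\subseteq\eta\setminus\{z\}$.
\end{enumerate}

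In each case the recursion for $\eta\setminus\{z\}$ fails, or it produces a later strip that must fail the conditions, using the parity identity \eqref{eq:sh-depth-parity} applied to the component. This contradicts the assumption that $\eta\setminus\{z\}$ is shifted Dyck.

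The main obstacle is this last case analysis, in particular controlling what happens after the modified strip. When deleting $x_1$ or $x_t$ shifts the recursion, later strips might be regrouped. I would handle this with the parity invariant $|\eta|\equiv\operatorname{dp}(\eta)\pmod2$ applied to each connected component, together with the fact that removing one box changes $|\eta|$ by one. This forces the number of strips to change, which can only happen if $z$ was a singleton strip.
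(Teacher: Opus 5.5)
\textbf{The gap is in the $\supseteq$ direction.} Your proof of $\subseteq$ is essentially the paper's argument. The paper simply asserts that a box isolated at its removal stage has no East or South neighbour in $\eta$, and your monotonicity remark tries to justify that step.

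The problem in $\supseteq$ is your trichotomy. You claim that a removable corner $z\in\theta_s$ must be $x_1$, $x_t$, or a box of $E(\theta)$. This is false: $z$ can be an interior box $x_k$ with $1<k<t$, where the path steps South into $z$ and West out of it.

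For example, take $\b=(5,3,1)$ and $\underline c=(3,1)$.
\begin{itemize}
\item Then $\eta$ equals its own border strip $\theta=\{(1,5),(1,4),(2,4),(2,3),(3,3)\}$, with levels $6,5,6,5,6$.
\item So $\eta$ is shifted Dyck of depth one.
\item The box $z=(2,4)=x_3$ is a removable corner of $\b$, since $\b\setminus\{z\}=(5,2,1)$.
\item None of your three cases applies to this $z$.
\end{itemize}

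Two further problems with your cases:
\begin{itemize}
\item Your case $z=x_t$ with $t>r$ cannot occur. In that case $z+(1,1)\in E(\theta)\subseteq\eta\subseteq\b$, which is impossible for a removable corner.
\item The case $z=x_t$ with $t=r>1$ is not treated.
\item In case 1, the claim that $r$ ``changes by one'' is not correct. What is true is this: if $r>1$, the new path is not almost Dyck; if $r=1$, the new border has an even number $t-1$ of boxes, whereas borders of connected shifted Dyck shapes have odd size.
\end{itemize}

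\textbf{Your fallback does not close the gap.} The identity \eqref{eq:sh-depth-parity} only says that if both $\eta$ and $\eta\setminus\{z\}$ are shifted Dyck, then their depths have different parity. The assertion that this ``can only happen if $z$ was a singleton strip'' is the content of the lemma itself; it does not follow from parity.

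\textbf{The missing idea.} You never need to analyze later, possibly regrouped strips, because $C\setminus\{z\}$ already fails at its very first step.
\begin{itemize}
\item Take any $k>1$. Since $z$ has no East neighbour in $\b$, the predecessor $x_{k-1}$ lies directly North of $z$. Hence $\operatorname{lv}(x_{k-1})=\operatorname{lv}(z)-1<\operatorname{lv}(x_1)$.
\item Deleting $z$ splits off the component containing $x_{k-1}$, whose border is $\{x_1,\ldots,x_{k-1}\}$.
\item The last return of this border to level $\operatorname{lv}(x_1)$ comes before $x_{k-1}$. So its shifted completion must contain $x_{k-1}+(1,1)=z+(0,1)$, which is not in $\b$.
\item This single argument covers every $k>1$, both the interior turns and $k=t$.
\end{itemize}
Combine it with your case $z\in E(\theta)$ and the odd-border-size argument for $z=x_1$, $t>1$. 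Together these force $t=1$, and hence $\theta_s=\{z\}$.
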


\begin{proof}
We first show that the left side of (\ref{eq:sh-corner-removal}) is contained in the right side. Let $z\in S(\eta)$, and let $C$ be the connected component in the recursive definition of shifted Dyck shape for which $\{z\}=\theta(C)$. By definition of border strip, we have $C=\{z\}$. It follows that no box in $\eta$ to the East nor South of $z$ belongs to $\eta$. Since $\eta=\b\setminus \underline{c}$ and $\underline{c}$ is a strict partition, we have that $\b\setminus \{z\}\in \mathcal{S}_m$. Furthermore, since $\eta\setminus \{z\}=\eta\setminus C$, by the recursive definition of shifted Dyck shape, $\eta\setminus\{z\}\in
\operatorname{sDyck}(\b\setminus\{z\})$.

Conversely, assume that $z$ is a box belonging to the right side of (\ref{eq:sh-corner-removal}). Since $\eta\in \operatorname{sDyck}(\b)$, there is a connected component $C$ in the recursive definition of shifted Dyck shape for which $z\in \theta_s$, where $\theta=\theta(C)$ is the border strip of $C$. We must show that $\theta_s=\{z\}$. Since $\b\setminus \{z\}\in \mathcal{S}_m$, it follows that no box South or East of $z$ belongs to $\b$. Thus, no box South or East of $z$ belongs to $\eta$. Since $\eta\setminus \{z\}$ is shifted Dyck, it follows that $C\setminus \{z\}$ is shifted Dyck. If $z\in E(\theta)$, deleting $z$ leaves the
border $\theta$ unchanged, but its completion requires
the missing box $z$, contrary to
\eqref{eq:sh-dyck-recursion}. Thus $z\in \theta$. Write $\theta=\{x_1,\ldots,x_t\}$, ordered from Northeast to
Southwest, and assume $z=x_k$. If $k=1$ and $t>1$, then $z$ has only one neighbor in $C$, and
deleting it leaves a connected shape with border
$\{x_2,\ldots,x_t\}$. This border has even size, whereas
the border of a connected shifted Dyck shape has odd size (see discussion at beginning of Section \ref{sec:shifted-proof}), contradicting that $C\setminus \{z\}$ is shifted Dyck. If $k>1$, then $x_{k-1}$ lies directly north of $z$, since
$z$ is a removable corner. The component of
$C\setminus\{z\}$ containing $x_{k-1}$ has border
$\{x_1,\ldots,x_{k-1}\}$. By \eqref{eq:sh-almost-dyck},
\[
\operatorname{lv}(x_{k-1})
=\operatorname{lv}(z)-1
<\operatorname{lv}(x_1).
\]
Since $x_{k-1}$ has level strictly less than
$\operatorname{lv}(x_1)$, the last box of the border
$\{x_1,\ldots,x_{k-1}\}$ at level
$\operatorname{lv}(x_1)$ occurs before $x_{k-1}$.
Thus \eqref{eq:sh-completion} requires the shifted
completion of this border to contain
\[
x_{k-1}+(1,1)=z+(0,1)\notin\b,
\]
contrary to \eqref{eq:sh-dyck-recursion}. Therefore, $\theta_s=\{z\}$, so $z\in S(\eta)$.
\end{proof}

\begin{remark}\label{rem:sh-brenti-conventions}
We explain the translation between our coordinates and \cite{brenti}. For $(i,j)\in\delta_m$, set
\[
\iota_m(i,j)=(m+1-j,m+1-i).
\]
The map $\iota_m$ interchanges South and West steps, taking our diagrams to Brenti's rotated diagrams. As Brenti's level function is
$\operatorname{lv}_{\mathrm{Br}}(i,j)=i+j-1$, for a connected skew shifted shape $\eta$, we have
\[
\theta_{\mathrm{Br}}(\iota_m(\eta))=\iota_m(\theta(\eta)),
\qquad
\operatorname{lv}_{\mathrm{Br}}(\iota_m(u))
=2m+1-\operatorname{lv}(u).
\]
The inequality in \eqref{eq:sh-almost-dyck} therefore becomes
his inequality
\[
\operatorname{lv}_{\mathrm{Br}}(\iota_m(x_u))
\geq\operatorname{lv}_{\mathrm{Br}}(\iota_m(x_1)),
\]
and the last return index $r$ is unchanged. Consequently, \eqref{eq:sh-dyck-recursion} is the recursion
of \cite{brenti}*{Section 3} after applying $\iota_m$,
and the depths agree.
\end{remark}

For $\b\in\mc S_m$, let $w_{\b}\in\W^{\m}$ be the element
whose partition in \cite{brenti}*{Propositions 2.5 and 2.9} is
\[
\b^\vee:=\delta_m\setminus\iota_m(\b),
\]
with $\iota_m$ as in Remark~\ref{rem:sh-brenti-conventions}. Then
\begin{equation}\label{eq:sh-schubert-indices}
Z_{\b}=Z_{w_{\b}},\qquad
\ell(w_{\b})=d_X-|\b|,\qquad
w_{\a}\leq w_{\b}
\quad\Longleftrightarrow\quad\b\subseteq\a.
\end{equation}

\begin{proof}[Proof of Theorem~\ref{thm:sh-local-cohomology}]
Fix $\a\in\mc S_m$ and put $x=w_{\a}$.
For $\underline c\subseteq\b\subseteq\a$, put
$y=w_{\b}$, $t=w_{\underline c}$, and
$\eta=\b/\underline c$.
Since $\iota_m(\eta)=\underline c^\vee/\b^\vee$,
Remark~\ref{rem:sh-brenti-conventions} allows us to apply
\cite{brenti}*{Theorem 4.1}.
Together with \cite{irvingBook}*{Corollary 7.1.3},
BGG duality, and the normalization in
Proposition~\ref{prop:am-weight-normalization}, this gives
\begin{equation}\label{eq:sh-standard-factors}
\sum_{r\geq0}
\big[\operatorname{Gr}^W_{d_X+\ell(y)+r}N_y:L_t\big]v^r
=\begin{cases}
v^{\operatorname{dp}(\eta)},
   &\eta\in\operatorname{sDyck}(\b),\\
0,&\textnormal{otherwise}.
\end{cases}
\end{equation}
Every simple constituent of $N_y$ is supported on $Z_{\b}$
after localization, so \eqref{eq:sh-schubert-indices} gives
all its possible labels as $w_{\underline c}$ with
$\underline c\subseteq\b$.
Comparing \eqref{eq:sh-standard-factors} with
\eqref{eq:am-weight-layers} and using
\eqref{eq:admissible-factors} gives
\begin{equation}\label{eq:sh-root-depth}
\begin{gathered}
\mathscr E_y\xrightarrow{\sim}\operatorname{sDyck}(\b),\qquad
\Omega\longmapsto\b/\underline c,
\quad t_y(\Omega)=w_{\underline c},\\
|\Omega_y^+|=\operatorname{dp}(\b/\underline c).
\end{gathered}
\end{equation}

Let $\Omega\in\mathscr E_y$ correspond to $\eta$ under
\eqref{eq:sh-root-depth}.
For a removable corner $u\in\eta$ of $\b$, write
$ys_\alpha=w_{\b\setminus\{u\}}>y$, using
\eqref{eq:sh-schubert-indices} and
Lemma~\ref{lem:root-bruhat-ideals}\textup{(iii)}.
Equations \eqref{eq:am-common},
\eqref{eq:am-common-positive-root}, and
\eqref{eq:sh-standard-factors}, together with
Lemma~\ref{lem:sh-corner-removal}, give
\[
\alpha\in\Omega_y^+\cap\Pi
\quad\Longleftrightarrow\quad
\eta\setminus\{u\}\in
\operatorname{sDyck}(\b\setminus\{u\})
\quad\Longleftrightarrow\quad u\in S(\eta).
\]
Conversely, every $\alpha\in\Omega_y^+\cap\Pi$ gives a
removable corner $u\in\b$ with
$ys_\alpha=w_{\b\setminus\{u\}}$, by
Lemma~\ref{lem:lc-deletion} and \eqref{eq:sh-schubert-indices}.
Equations \eqref{eq:lc-deletion-identities} and
\eqref{eq:sh-root-depth} imply
$\underline c\subseteq\b\setminus\{u\}$, so $u\in\eta$.
Therefore
\begin{equation}\label{eq:sh-terminal-condition}
\Omega_y^+\cap\Pi=\varnothing
\quad\Longleftrightarrow\quad
\eta\textnormal{ has no singleton strips}.
\end{equation}

Suppose $\Omega_y^+\cap\Pi=\varnothing$.
For an addable corner $u\in\a/\b$ of $\b$,
\eqref{eq:sh-schubert-indices} and
Lemma~\ref{lem:root-bruhat-ideals}\textup{(iii)} give
\[
z=ys_\alpha=w_{\b\cup\{u\}},\qquad x\leq z<y.
\]
By \eqref{eq:sh-root-depth},
$(\b\cup\{u\})/\underline c$ is shifted Dyck if and only if
there is $\Psi\in\mathscr E_z$ with $t_z(\Psi)=t_y(\Omega)$.
For such $\Psi$, \eqref{eq:am-common} and
\eqref{eq:am-common-positive-root} give $\alpha\in\Psi_z^+$.
Thus Lemma~\ref{lem:lc-deletion} gives $D_\alpha\Psi=\Omega$.
Equation \eqref{eq:lc-root-deletion} then gives
$\Psi=\Omega\cup\{\alpha\}$, so
$\alpha\in\mathscr I(x,y,\Omega)$ by \eqref{eq:lc-directions}.
Conversely, if $\alpha\in\mathscr I(x,y,\Omega)$,
Lemma~\ref{lem:lc-boolean} places
$(z,\Omega\cup\{\alpha\})$ in the connected component of
$(y,\Omega)$, and \eqref{eq:lc-deletion-identities} gives
$t_z(\Omega\cup\{\alpha\})=t_y(\Omega)$.
Hence \eqref{eq:sh-root-depth} gives
\[
\alpha\in\mathscr I(x,y,\Omega)
\quad\Longleftrightarrow\quad
(\b\cup\{u\})/\underline c\in
\operatorname{sDyck}(\b\cup\{u\}).
\]
Every element of $\mathscr I(x,y,\Omega)$ arises from such
a corner by \eqref{eq:lc-directions} and
\eqref{eq:sh-schubert-indices}.
Therefore \eqref{eq:sh-bullet-condition} gives
\begin{equation}\label{eq:sh-bullet-equivalence}
\mathscr I(x,y,\Omega)=\varnothing
\quad\Longleftrightarrow\quad
(\eta;\a/\b)\in\operatorname{sDyck}^{\bullet}(\a).
\end{equation}

For $\DD=(\eta;\a/\b)$, equations
\eqref{eq:sh-schubert-indices},
\eqref{eq:sh-augmented-support},
\eqref{eq:sh-augmented-statistics}, and
\eqref{eq:sh-root-depth} give
\begin{equation}\label{eq:sh-root-statistics}
\ell(y)=d_X-|\a|+b(\DD),\qquad
|\Omega_y^+|=\operatorname{dp}(\DD),\qquad
t_y(\Omega)=w_{\a^{\DD}}.
\end{equation}
By \eqref{eq:lc-terminal-pairs},
\eqref{eq:sh-terminal-condition}, and
\eqref{eq:sh-bullet-equivalence}, we obtain a bijection
\begin{equation}\label{eq:sh-terminal-bijection}
\begin{gathered}
\left\{(y,\Omega)\in\mathscr Y_p(x;q)\ \middle|\
\mathscr I(x,y,\Omega)=\varnothing\right\}
\xrightarrow{\sim}\mathscr A^{\mathrm{sh}}_p(\a;q),\\
(y,\Omega)\longmapsto\DD=(\b/\underline c;\a/\b).
\end{gathered}
\end{equation}
Indeed, \eqref{eq:sh-root-statistics} identifies
$\ell(y)=q$ and $|\Omega_y^+|=p-d_X-q$ with the conditions
in \eqref{eq:sh-lc-patterns} and \eqref{eq:sh-weight-patterns}.
For the inverse, $\DD=(\eta;\mathbb B)$ determines
$\b=\a\setminus\mathbb B$, $\underline c=\a^{\DD}$,
and the unique $\Omega$ in \eqref{eq:sh-root-depth}.
Finally, \eqref{eq:sh-schubert-indices} and
\eqref{eq:sh-root-statistics} give
\[
\frac{d_X-\ell(t_y(\Omega))-p}{2}
=\frac{|\a^{\DD}|-p}{2}.
\]
Thus \eqref{eq:lc-main-formula} and
\eqref{eq:sh-terminal-bijection} give \eqref{eq:sh-weight-formula}.
\end{proof}

\subsection{Application to rank varieties of symmetric and skew-symmetric matrices}
\label{sec:matrix-applications}

Let $\mathscr X=\operatorname{Sym}^2\mathbb C^n$ or
$\mathscr X=\bigwedge^2\mathbb C^n$, and let
$\mathscr Z_p\subseteq\mathscr X$ be the variety of matrices of
rank at most $p$ or $2p$, respectively. Write
$d=\dim\mathscr X$ and let $D_r$ be the intersection cohomology
$\mathscr D_{\mathscr X}$-module of $\mathscr Z_r$ associated
to the trivial local system. The composition factors of
$\mathscr H^q_{\mathscr Z_p}(\mathscr O_{\mathscr X})$
were calculated by Raicu--Weyman
\cite{raicu2016local}*{Main Theorem}. We use
Theorem~\ref{thm:sh-local-cohomology} to determine their weights.

\begin{theorem}\label{thm:matrix-rank-varieties}
Suppose $0\leq p<n$ in the symmetric case and
$0\leq p<\lfloor n/2\rfloor$ in the skew-symmetric case.
For every $q\geq0$, each copy of $D_r$ in
$\mathscr H^q_{\mathscr Z_p}(\mathscr O_{\mathscr X})$
underlies a pure Hodge module of weight
\begin{equation}\label{eq:matrix-weight-factor}
\begin{cases}
d+q+\dfrac{p-r}{2},&\mathscr X=\operatorname{Sym}^2\mathbb C^n,\\[4pt]
d+q+p-r,&\mathscr X=\bigwedge^2\mathbb C^n.
\end{cases}
\end{equation}
\end{theorem}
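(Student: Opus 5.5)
The plan is to pass from $X$ to the opposite big cell and then read off weights from Theorem~\ref{thm:sh-local-cohomology}. Let $U\subseteq X$ be the opposite big cell; it is an open affine space, identified with $\operatorname{Sym}^2\mathbb C^n$ when $X=\operatorname{LGr}(n,2n)$ (so $m=n$) and with $\bigwedge^2\mathbb C^n$ when $X=\operatorname{OGr}(n,2n)$ (so $m=n-1$). In both cases $d=d_X=m(m+1)/2$. Under this identification each rank variety $\mathscr Z_p$ is $Z_{\a}\cap U$ for an explicit strict partition $\a$.

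I would determine $\a$ by a dimension count. In the symmetric case, matrices of rank at most $p$ have codimension $\binom{n-p+1}{2}$, so I expect
\[
\a=(n,n-1,\ldots,n-p+1),
\]
the staircase with $p$ full rows. In the skew case I expect $\a=(n-1,n-2,\ldots,n-2p)$, with $2p$ rows. I would confirm these choices by matching the $B$-orbit stratification on $U$ with the rank stratification; the intersections $Z_{\b}\cap U$ that are nonempty should be exactly the rank loci, namely those with $\b$ a staircase of the same form.

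Restriction to $U$ is exact and commutes with local cohomology, and it preserves weights because it is an open restriction. It sends $\operatorname{IC}_{Z_{\b}}^H$ to the IC Hodge module of $Z_{\b}\cap U$, and it kills $\operatorname{IC}_{Z_{\b}}$ exactly when $Z_{\b}\cap U=\varnothing$. Theorem~\ref{thm:sh-local-cohomology} then says that each copy of $D_r$ is some $\operatorname{IC}^H_{Z_{\a^{\DD}}}$ with $\a^{\DD}$ the rank-$r$ staircase, sitting in weight $p_W=q+d+\operatorname{dp}(\DD)$ by \eqref{eq:sh-weight-patterns}. The problem therefore reduces to one combinatorial claim. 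Let $\DD\in\mathscr A^{\mathrm{sh}}(\a;q)$ have $\a^{\DD}$ equal to the rank-$r$ staircase $\underline c$. Then the claim is
\[
\operatorname{dp}(\DD)=\frac{p-r}{2}\ \text{(symmetric case)},\qquad \operatorname{dp}(\DD)=p-r\ \text{(skew case)}.
\]

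I expect this depth identity to be the main obstacle. First I would use the parity relation \eqref{eq:sh-depth-parity}, $|\eta|\equiv\operatorname{dp}(\eta)\pmod 2$, as a consistency check. Then I would analyze the shape $\eta=\b/\underline c$ directly, where $\b=\a\setminus\mathbb B$.

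\begin{enumerate}
\item Since $\a$ and $\underline c$ are both full-row staircases, the skew region between them consists of whole rows, from row $r+1$ through row $p$ (or through row $2p$ in the skew case).
\item The bullet condition \eqref{eq:sh-bullet-condition}, together with the no-singleton-strips condition, should force the border strips of $\eta$ to run along pairs of consecutive rows, with each completed strip $\theta_s$ covering exactly two rows of the region.
\item This pairing is exactly what the two middle examples in Example~\ref{ex:sh-lgr-seven} show.
\end{enumerate}

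Counting strips then gives depth $(p-r)/2$ in the symmetric case, where $p-r$ should be forced even, and depth $(2p-2r)/2=p-r$ in the skew case. The first thing I would try is an induction on the number of rows, peeling off the outermost strip using the recursion \eqref{eq:sh-dyck-recursion}. As a final cross-check, I would compare the resulting multiplicities with those of \cite{raicu2016local}.
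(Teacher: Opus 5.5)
Your reduction is the same as the paper's. You restrict Theorem~\ref{thm:sh-local-cohomology} to the big cell, observe that each copy of $D_r$ in $\operatorname{Gr}^W_w$ comes from a pattern $\DD\in\mathscr A^{\mathrm{sh}}_w(\underline p;q)$ with $\a^{\DD}=\underline r$, so that $w=d+q+\operatorname{dp}(\DD)$, and reduce to a depth identity. The gap is that this depth identity is essentially the whole proof, and you only assert that the bullet and no-singleton conditions ``should force'' it. Those two conditions do different jobs, and each has to be used explicitly.

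Write $s=r$, $k=p$ in the symmetric case and $s=2r$, $k=2p$ in the skew case. In the skew case the region is therefore rows $2r+1$ through $2p$, not rows $r+1$ through $2p$.
\begin{enumerate}
\item Since $\underline r$ and $\underline p$ agree with $\delta_m$ in rows $1,\dots,s$, so does $\b$. Hence $\eta=\b/\underline r$ is the set of rows of $\b$ with index $>s$, and it is connected.
\item Let $L$ be the length of the top row of $\eta$. Strictness of $\b$ shows that $\theta(\eta)$ is exactly this row, with last-return index $1$. So $E(\theta)=\{(s+2,s+2),\dots,(s+2,s+L)\}$.
\item The parity condition in \eqref{eq:sh-dyck-recursion} makes $L$ odd, and the no-singleton condition gives $L\ge3$.
\item The containment $E(\theta)\subseteq\eta$ forces row $s+2$ of $\b$ to have length at least $L-1$, and strictness forces at most $L-1$. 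So $\theta_s$ is precisely the top two rows.
\item Induction then shows that $\eta$ is a stack of row pairs of lengths $(2h+1,2h)$ with $h\ge1$.
\end{enumerate}
This part uses only the Dyck recursion and the absence of singleton strips.

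The pairing does not yet give the depth. A priori $\b$ could stop after $j<k$ rows, leaving rows $j+1,\dots,k$ entirely as bullets. Then $\operatorname{dp}(\DD)=(j-s)/2$ is too small; for instance, take $\b=\underline r$ and $\eta=\varnothing$. The bullet condition is what excludes this, and this is the idea missing from your outline. Suppose $j<k$. Then the diagonal box $(j+1,j+1)$ lies in $\underline p/\b$ and is an addable corner of $\b$; strictness is preserved because row $j$ of $\b$, if present, has length at least $2$. Moreover $\eta\cup\{(j+1,j+1)\}$ is shifted Dyck: peel off the row pairs and a final singleton strip remains. This contradicts \eqref{eq:sh-bullet-condition}. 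Hence $\b$ has exactly $k$ rows, $k-s$ is even, and $\operatorname{dp}(\DD)=(k-s)/2$, which equals $(p-r)/2$ or $p-r$ as required.

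Separately, your claim that $Z_{\b}\cap U$ is nonempty only for full-row staircases is false. Every Schubert variety contains the $B$-fixed point $F\in U$. This is harmless for the argument: distinct Schubert varieties restrict to distinct irreducible subvarieties of $U$, so copies of $D_r$ still correspond exactly to patterns with $\a^{\DD}=\underline r$. But you should not use it to confirm the choice of $\a$; use $V_A\cap F=\ker A$ instead.
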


Let $X=\operatorname{LGr}(n,2n)$ or $\operatorname{OGr}(n,2n)$, respectively, and let $F\in X$
be the closed Schubert point. Choosing an isotropic complement
$F^*$ gives an open chart $U\cong\mathscr X$ whose points are
graphs $V_A$ of symmetric or skew-symmetric maps $A:F\to F^*$.
Since $V_A\cap F=\ker A$, the rank loci are the restrictions of
the Schubert varieties indexed by
\begin{equation}\label{eq:matrix-rank-diagrams}
\underline p=
\begin{cases}
(n,n-1,\ldots,n-p+1),&\mathscr X=\operatorname{Sym}^2\mathbb C^n,\\
(n-1,n-2,\ldots,n-2p),&\mathscr X=\bigwedge^2\mathbb C^n.
\end{cases}
\end{equation}
Thus $\underline p$ consists of the first $p$ or $2p$ rows of
$\delta_m$, where $m=n$ or $n-1$ as before; $\underline0$ is
the zero diagram. Open restriction gives
\begin{equation}\label{eq:matrix-open-restriction}
\operatorname{IC}_{Z_{\underline r}}^H|_U
 \cong\operatorname{IC}_{\mathscr Z_r}^H,
\qquad
\mathscr H^q_{Z_{\underline p}}(\mathscr O_X^H)|_U
 \cong\mathscr H^q_{\mathscr Z_p}(\mathscr O_{\mathscr X}^H).
\end{equation}

\begin{proof}[Proof of Theorem~\ref{thm:matrix-rank-varieties}]
Write $k=p$, $s=r$ in the symmetric case and $k=2p$, $s=2r$
in the skew-symmetric case. Let
$\DD=(\b/\underline r;\underline p/\b)
\in\mathscr A^{\mathrm{sh}}(\underline p;q)$.
Put $\eta=\b/\underline r$. Since $\underline r$ consists
of the first $s$ rows of $\delta_m$, $\eta$ consists of
the rows of $\b$ with indices greater than $s$.
If $\eta\ne\varnothing$, it is connected. Writing $L$ for
the length of its first row, \eqref{eq:sh-inner-boundary}
and \eqref{eq:sh-completion} give
\[
\theta=\theta(\eta)
 =\{(s+1,j)\mid s+1\leq j\leq s+L\},
\qquad
E(\theta)
 =\{(s+2,j)\mid s+2\leq j\leq s+L\}.
\]
By \eqref{eq:sh-dyck-recursion}, $|E(\theta)|=L-1$ is
even, so $L$ is odd. The absence of singleton strips
excludes $L=1$, hence $L\geq3$.
The containment $E(\theta)\subseteq\eta$ requires row $s+2$ to have length at least $L-1$, while strictness
of $\b$ makes its length at most $L-1$.
Thus $\theta_s$ consists of exactly the first two rows
of $\eta$, of lengths $(L,L-1)$.
The remaining shape is again shifted Dyck with no
singleton strips. Induction therefore pairs all rows
of $\eta$, with each pair having lengths $(2h+1,2h)$
for some $h\geq1$.
Moreover, $\b$ has exactly $k$ rows: otherwise, adjoining
the next diagonal box gives a shifted Dyck shape with a
final singleton strip, contrary to
\eqref{eq:sh-bullet-condition}. Thus $k-s$ is even and
\begin{equation}\label{eq:matrix-paired-rows}
\b=(m,m-1,\ldots,m-s+1,
       2h_1+1,2h_1,\ldots,2h_b+1,2h_b),
\qquad b=\frac{k-s}{2},
\end{equation}
where, if $b>0$, we have $h_1>\cdots>h_b\geq1$ and
$2h_1+1\leq m-s$. Conversely, every such diagram gives
an admissible pattern: each addable box in
$\underline p/\b$ extends an odd row to even length,
so \eqref{eq:sh-bullet-condition} holds.

Each pair contributes one completed strip and $4h_i+1$
boxes. Hence
\begin{equation}\label{eq:matrix-pattern-statistics}
\operatorname{dp}(\DD)=b,\qquad
q=d-|\b|=d-|\underline r|-b-4\sum_{i=1}^b h_i.
\end{equation}
By Theorem~\ref{thm:sh-local-cohomology} and
\eqref{eq:matrix-open-restriction}, the corresponding copy
of $D_r$ has weight $d+q+b$, as required.
\end{proof}

\begin{remark}
Counting the patterns in \eqref{eq:matrix-paired-rows} by
cohomological degree recovers the composition factor
formula \cite{raicu2016local}*{Main Theorem}, just as in
\cite{schubertLC}*{Section 4.5} for generic determinantal
varieties.
\end{remark}

\begin{example}
Take $\mathscr X=\operatorname{Sym}^2\mathbb C^9$ with $p=4$,
or $\mathscr X=\bigwedge^2\mathbb C^{10}$ with $p=2$.
In both cases, $d=45$ and $\underline p=(9,8,7,6)$.
The six patterns with remaining diagram $\underline0$
are shown below.\\

\begingroup
\newcommand{\shRankGrid}{%
  \foreach \shrow/\shlen in {1/9,2/8,3/7,4/6}{%
    \pgfmathtruncatemacro{\shlast}{\shrow+\shlen-1}%
    \foreach \shcol in {\shrow,...,\shlast}{%
      \draw[thick] ({2*(\shcol-1)},{-2*(\shrow-1)})
        rectangle ++(2,-2);
    }%
  }%
}
\newcommand{\shRankPair}[2]{%
  \draw[red,line width=4pt]
    ({2*(#1-1)+.4},{1-2*#1})--
    ({2*(#1-1)+2*#2-.4},{1-2*#1});
  \draw[orange,line width=4pt]
    ({2*#1+.4},{-1-2*#1})--
    ({2*(#1-1)+2*#2-.4},{-1-2*#1});
}
\newcommand{\shRankBullets}[2]{%
  \foreach \shrow in {#1}{%
    \foreach \shcol in {#2}{%
      \fill[green!65!black] ({2*\shcol-1},{1-2*\shrow})
        circle[radius=.45];
    }%
  }%
}
\begin{center}
\begin{minipage}[t]{.32\textwidth}
\centering
\begin{tikzpicture}[x=\unitsize,y=\unitsize]
\shRankGrid
\shRankPair{1}{9}\shRankPair{3}{7}
\end{tikzpicture}
\par\smallskip $q=15$
\end{minipage}\hfill
\begin{minipage}[t]{.32\textwidth}
\centering
\begin{tikzpicture}[x=\unitsize,y=\unitsize]
\shRankGrid
\shRankPair{1}{9}\shRankPair{3}{5}
\shRankBullets{3,4}{8,9}
\end{tikzpicture}
\par\smallskip $q=19$
\end{minipage}\hfill
\begin{minipage}[t]{.32\textwidth}
\centering
\begin{tikzpicture}[x=\unitsize,y=\unitsize]
\shRankGrid
\shRankPair{1}{9}\shRankPair{3}{3}
\shRankBullets{3,4}{6,7,8,9}
\end{tikzpicture}
\par\smallskip $q=23$
\end{minipage}
\par\medskip
\begin{minipage}[t]{.32\textwidth}
\centering
\begin{tikzpicture}[x=\unitsize,y=\unitsize]
\shRankGrid
\shRankPair{1}{7}\shRankPair{3}{5}
\shRankBullets{1,2,3,4}{8,9}
\end{tikzpicture}
\par\smallskip $q=23$
\end{minipage}\hfill
\begin{minipage}[t]{.32\textwidth}
\centering
\begin{tikzpicture}[x=\unitsize,y=\unitsize]
\shRankGrid
\shRankPair{1}{7}\shRankPair{3}{3}
\shRankBullets{1,2}{8,9}\shRankBullets{3,4}{6,7,8,9}
\end{tikzpicture}
\par\smallskip $q=27$
\end{minipage}\hfill
\begin{minipage}[t]{.32\textwidth}
\centering
\begin{tikzpicture}[x=\unitsize,y=\unitsize]
\shRankGrid
\shRankPair{1}{5}\shRankPair{3}{3}
\shRankBullets{1,2,3,4}{6,7,8,9}
\end{tikzpicture}
\par\smallskip $q=31$
\end{minipage}
\end{center}
\endgroup

\medskip

Each pattern contributes a copy of $D_0$ in the indicated
cohomological degree. All six have depth two, so their
weights are, from left to right in each row,
\[
62,\,66,\,70,\,70,\,74,\,78.
\]
Since $c=\operatorname{codim}\mathscr Z_p=15$,
the weight of each copy of $D_0$ is $d+c+2=62$ plus
the number of bullets. \hfill\mbox{$\diamond$}
\end{example}

The following corollary refines the value in
\cite{DORhrh}*{Proposition 10.10}, which gives
$\operatorname{HRH}(\mathscr Z_p)\in\{0,1\}$ outside the
rational homology manifold cases.

\begin{cor}\label{cor:matrix-hrh}
Under the hypotheses of
Theorem~\ref{thm:matrix-rank-varieties},
\[
\operatorname{HRH}(\mathscr Z_p)=
\begin{cases}
+\infty,
   &\mathscr X=\operatorname{Sym}^2\mathbb C^n,\ p\leq1,\\
+\infty,
   &\mathscr X=\bigwedge^2\mathbb C^n,\ p=0,\\
1,&\textnormal{otherwise}.
\end{cases}
\]
\end{cor}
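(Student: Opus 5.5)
The plan is to run the argument of Corollary~\ref{cor:hrh} directly on the open chart $U\cong\mathscr X$, using the explicit patterns found in the proof of Theorem~\ref{thm:matrix-rank-varieties}. The criterion \eqref{eq:hrh-criterion} concerns only the Hodge filtrations of the local cohomology modules, so it is local. By \eqref{eq:matrix-open-restriction}, the modules $\mathscr H^q_{\mathscr Z_p}(\mathscr O_{\mathscr X}^H)$ are the restrictions of $\mathscr H^q_{Z_{\underline p}}(\mathscr O_X^H)$. Every summand $\operatorname{IC}_{Z_{\underline r}}^H$ has support meeting $U$, so its restriction is $\operatorname{IC}_{\mathscr Z_r}^H$ and is nonzero. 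The starting Hodge level of a Tate twist $\operatorname{IC}^H(k)$ equals its codimension plus $k$, both before and after restriction. Hence the computation in the proof of Corollary~\ref{cor:hrh} applies verbatim on $U$. This gives $\operatorname{HRH}(\mathscr Z_p)$ as the minimum, over constituents other than the lowest weight piece $\operatorname{IC}^H_{\mathscr Z_p}(-c)$, of (starting level $-1$), where the starting level is \eqref{eq:hrh-first-level}.

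Next I will evaluate \eqref{eq:hrh-first-level} on each pattern $\DD=(\b/\underline r;\underline p/\b)$ described in \eqref{eq:matrix-paired-rows}. Under the bijection \eqref{eq:sh-terminal-bijection} we have $\ell(t_y(\Omega))=d-|\underline r|$, $\ell(y)=q$, and $|\Omega_y^+|=\operatorname{dp}(\DD)=b$. Combining this with \eqref{eq:matrix-pattern-statistics} gives
\[
\frac{\ell(t_y(\Omega))-\ell(y)-|\Omega_y^+|}{2}
=\frac{(d-|\underline r|)-\bigl(d-|\underline r|-b-4\sum_i h_i\bigr)-b}{2}
=2\sum_{i=1}^b h_i .
\]
The unique pattern with $b=0$ has $\b=\underline r=\underline p$ and weight $d+c$. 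It is the lowest weight piece and is excluded from the minimum. Every other pattern has $b\geq1$, so its weight is $d+q+b\geq c+d+1$. Since $h_i\geq1$, such a pattern has starting level at least $2$ and contributes at least $1$. Equality holds exactly when $b=1$ and $h_1=1$.

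It remains to decide when a pattern with $b=1$ and $h_1=1$ exists; I expect this bookkeeping to be the only delicate step. By \eqref{eq:matrix-paired-rows}, we need $k-s=2$ together with $3\leq m-s$.

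In the symmetric case we have $k=p$, $s=r$ and $m=n$. The pattern requires $r=p-2\geq0$, so $p\geq2$, and then $3\leq n-p+2$ holds because $p<n$.

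In the skew-symmetric case we have $k=2p$, $s=2r$ and $m=n-1$. The pattern requires $r=p-1$, so $p\geq1$, and then $3\leq n-1-2p+2$ amounts to $2p\leq n-2$, which follows from $p<\lfloor n/2\rfloor$.

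In the complementary cases, namely symmetric with $p\leq1$ and skew with $p=0$, every pattern with $b\geq1$ would need $s<0$. So no constituent of weight at least $c+d+1$ exists, the minimum is over the empty set, and $\operatorname{HRH}(\mathscr Z_p)=+\infty$. In all remaining cases the minimum equals $1$.
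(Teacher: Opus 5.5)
Your proposal is correct and follows the paper's proof essentially step for step. You restrict to the chart $U$, use the starting level from Corollary~\ref{cor:hrh} together with \eqref{eq:matrix-pattern-statistics} to get $2\sum_i h_i\geq 2$ for every constituent other than $\operatorname{IC}^H_{\mathscr Z_p}(-c)$, and realize the value $2$ by the pattern with $b=1$, $h_1=1$ exactly when $p\geq 2$ (symmetric) or $p\geq 1$ (skew-symmetric); your explicit check of the constraint $2h_1+1\leq m-s$ from $p<n$ and $p<\lfloor n/2\rfloor$ is a detail the paper leaves implicit.
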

\begin{proof}
Put $c=\operatorname{codim}_{\mathscr X}\mathscr Z_p$.
The pattern of depth zero gives
$\operatorname{IC}_{\mathscr Z_p}^H(-c)$ in degree $c$.
By the proof of Corollary~\ref{cor:hrh} and
\eqref{eq:matrix-pattern-statistics}, every other constituent
has Hodge filtration starting in level
\[
\frac{d-|\underline r|-q-b}{2}
=2\sum_{i=1}^b h_i\geq2.
\]
For $p\geq2$ in the symmetric case, take $r=p-2$;
for $p\geq1$ in the skew-symmetric case, take $r=p-1$.
In either case, the choice $b=1$, $h_1=1$ in
\eqref{eq:matrix-paired-rows} gives a constituent starting
in Hodge level two. In the remaining cases there are no
other constituents. The result follows from
\eqref{eq:hrh-criterion}.
\end{proof}

\section{The pairs $(\mathsf{B}_n,\mathsf{B}_{n-1})$ and $(\mathsf{D}_n,\mathsf{D}_{n-1})$}\label{sec:quadrics}

Write $Q^d\subseteq\mathbb P(V)$ for a smooth projective
quadric, where $\dim V=d+2$.
The pairs $(\mathsf B_n,\mathsf B_{n-1})$ and
$(\mathsf D_n,\mathsf D_{n-1})$ give $Q^{2n-1}$ and
$Q^{2n-2}$, respectively. See \cite[Section 9.2.28]{billey} for the assertions made here.
Fix a $B$-stable complete isotropic flag $F_\bullet$ of $V$. The singular Schubert varieties are
\begin{equation}\label{eq:quadric-exceptions}
Z_{x_c}=Q^d\cap\mathbb P(F_c^\perp),
\qquad
1\leq c\leq\left\lfloor\frac{d-1}{2}\right\rfloor,
\end{equation}
where $x_c$ denotes the corresponding Schubert index.
Each has codimension $c$ and singular locus
\[
\operatorname{Sing}(Z_{x_c})
=S_c:=\mathbb P(F_c)\cong\mathbb P^{c-1}.
\]
The remaining Schubert varieties are smooth. Indeed, they are $Q^d$
itself and linear spaces $\mathbb P^k$ for
$0\leq k\leq\lfloor d/2\rfloor$, with two of dimension
$n-1$ when $d=2n-2$.

\begin{prop}\label{prop:quadric-cohomology}
Every Schubert variety in $Q^{2n-1}$ is a rational homology
manifold. A Schubert variety in $Q^{2n-2}$ is a rational homology
manifold if and only if it is smooth.

For every Schubert variety $Z\subseteq Q^d$ of codimension $c$,
we have $\mathscr H_Z^q(\mathscr O_{Q^d}^H)=0$ for $q\ne c$.
For $Z=Z_{x_c}\subseteq Q^{2n-2}$ with $1\leq c\leq n-2$,
the nonzero weight pieces are
\begin{equation}\label{eq:quadric-weights}
 \operatorname{Gr}_{2n-2+c}^W
   \mathscr H_{Z_{x_c}}^c(\mathscr O_{Q^{2n-2}}^H)
   \cong\operatorname{IC}_{Z_{x_c}}^H(-c),\quad
 \operatorname{Gr}_{2n-1+c}^W
   \mathscr H_{Z_{x_c}}^c(\mathscr O_{Q^{2n-2}}^H)
   \cong\operatorname{IC}_{S_c}^H(-n).
\end{equation}
Furthermore, the Hodge rational homology level of $Z_{x_c}$ is
$\operatorname{HRH}(Z_{x_c})=n-c-2$.
\end{prop}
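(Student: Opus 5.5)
The plan combines a geometric argument for vanishing and the rational homology manifold property with the root-theoretic formula of Theorem~\ref{thm:local-cohomology} for the exact weights and twists. The last step is Corollary~\ref{cor:hrh}.

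\emph{Vanishing outside degree $c$.} For the smooth Schubert varieties ($Q^d$ and the linear spaces $\mathbb P^k$) the local cohomology is concentrated in degree $c$, and $\mathscr H^c_Z(\mathscr O^H_{Q^d})=\operatorname{IC}^H_Z(-c)$ is pure. For $Z_{x_c}=Q^d\cap\mathbb P(F_c^\perp)$, note that $\mathbb P(F_c^\perp)$ is cut out by $c$ linear forms. Since $Z_{x_c}$ has codimension exactly $c$ in $Q^d$, it is a local complete intersection in $Q^d$. Hence $\mathscr H^q_{Z_{x_c}}(\mathscr O_{Q^d})=0$ for $q\neq c$, which proves the vanishing claim for every Schubert variety.

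\emph{Rational homology manifold property.} Along $S_c=\mathbb P(F_c)$ the variety $Z_{x_c}$ is locally a product of a smooth germ with the affine cone over the smooth quadric $Q^{d-2c}\subseteq\mathbb P(F_c^\perp/F_c)$. The link of this cone is a circle bundle over $Q^{d-2c}$. It has the rational homology of a sphere exactly when $Q^{d-2c}$ has no primitive middle cohomology, that is, when $d-2c$ is odd. So every $Z_{x_c}$ is a rational homology manifold for $d=2n-1$. For $d=2n-2$ the extra middle class of the even-dimensional quadric $Q^{2n-2-2c}$ obstructs the property, and the obstruction is a rank-one local system on $S_c$.

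Consequently, for $d=2n-2$ the quotient $\mathscr H^c_{Z_{x_c}}(\mathscr O^H)/\operatorname{IC}^H_{Z_{x_c}}(-c)$ is supported on $S_c$ and has underlying $\mathscr D$-module $\operatorname{IC}_{S_c}$ with multiplicity one. It is a single $\operatorname{IC}_{S_c}$ rather than several because $S_c\cong\mathbb P^{c-1}$ is simply connected and the primitive class is one-dimensional.

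\emph{Exact weights.} To pin down the weight, and as a cross-check, I would run Theorem~\ref{thm:local-cohomology} directly. For the first-node parabolic of $\mathsf D_n$, the poset $\W^{\m}$ is a chain of length $2n-2$ with a single diamond in the middle. I will list, for each $y\geq x_c$, the sets $\mathscr E_y$ and the labels $t_y(\Omega)$. These are the composition factors of $N_y$, which for quadrics have at most two factors by Proposition~\ref{prop:am-weight-normalization} and the Enright--Shelton description. I then check that the only terminal pairs with $\mathscr I(x_c,y,\Omega)=\varnothing$ are the following two:
\begin{itemize}
\item $(x_c,\varnothing)$, which gives $\operatorname{IC}^H_{Z_{x_c}}(-c)$ in weight $2n-2+c$;
\item $(x_c,\Omega)$ with $|\Omega^+_{x_c}|=1$ and $t_{x_c}(\Omega)$ the index of $S_c$, which gives weight $2n-1+c$.
\end{itemize}
Since $\ell(t)=\operatorname{codim}S_c=2n-1-c$, the twist is $(2n-2-(2n-1-c)-(2n-1+c))/2=-n$, matching \eqref{eq:quadric-weights}.

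The main obstacle is this root computation near the diamond, namely $c=n-2$ and the two middle elements. There I would verify conditions (a)--(c) by hand in $\epsilon$-coordinates. The candidate root in $\Omega$ is $\epsilon_{c+1}$-type data, orthogonal pairs such as $\epsilon_i-\epsilon_j,\epsilon_i+\epsilon_j$, and I would check that no simple root lies in $\Omega^+_{x_c}$. That check is what makes the pair terminal.

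\emph{Hodge rational homology level.} Corollary~\ref{cor:hrh} applied to the single non-lowest constituent gives
\[
\frac{(2n-1-c)-c-1}{2}-1=n-c-2 .
\]
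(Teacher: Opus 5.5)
Your lci argument for the vanishing is correct: $Z_{x_c}$ has codimension $c$ in the smooth $Q^d$ and is cut out by the $c$ linear forms defining $\mathbb P(F_c^\perp)$. Your link computation for the rational homology manifold statement and for the composition factors $\operatorname{IC}_{Z_{x_c}}$, $\operatorname{IC}_{S_c}$ is also correct. Together these are a genuinely different, more geometric route than the paper's, which truncates the resolution $GC_e^\bullet$ of $L_e$.

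\emph{The gap is the weight.} The weight is the real content of \eqref{eq:quadric-weights} and of the HRH value, and it is never established. The lower bound $p\geq d+c$ and the parity condition $p\equiv d-\ell(t)\pmod 2$ only give $p\geq d+c+1$ for the $\operatorname{IC}_{S_c}$ constituent. You then assert the outcome of the root computation rather than carrying it out.

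Moreover, the premise you cite for that computation is false in type $\mathsf D$. The module $N_{x_c}$ has three Loewy layers, with factors $L_{x_c}$, $L_{x_{c+1}}$, $L_{x_{d+1-c}}$, $L_{x_{d-c}}$, and five factors when $c=n-2$. Your premise even contradicts your own predicted answer. If $N_{x_c}$ had at most two factors, they would be its socle $L_{x_c}$ and $L_{x_{c+1}}$. The latter is forced because the nonzero map $N_{x_c}\to N_{x_{c+1}}$ lands on a module with socle $L_{x_{c+1}}$. That would leave no room for $L_{x_{d+1-c}}$ at all. Since your HRH value is computed from this weight, it is unproved as well.

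\emph{How to close the gap within your framework.} Your vanishing result means Theorem~\ref{thm:local-cohomology} has no summands with $\ell(y)>c$. The only $y\geq x_c$ with $\ell(y)=c$ is $x_c$ itself, where $\mathscr I=\varnothing$ automatically. So you only need $|\Omega^+_{x_c}|$ for the unique $\Omega\in\mathscr E_{x_c}$ with $t_{x_c}(\Omega)=x_{d+1-c}$.

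To compute it, set $\widetilde\alpha=\epsilon_1-\epsilon_2$. Then $x_c=s_{\alpha_1}\cdots s_{\alpha_c}$ sends $\epsilon_{c+1}\mapsto\epsilon_1$ and $\epsilon_i\mapsto\epsilon_{i+1}$ for $i\leq c$. Take $\Omega=\{\epsilon_c-\epsilon_{c+1},\,\epsilon_c+\epsilon_{c+1}\}$.
\begin{itemize}
\item Conditions (a)--(c) are a direct check.
\item $\Omega^+_{x_c}=\{\epsilon_c+\epsilon_{c+1}\}$ contains no simple root.
\item Count the negative roots in $s_{\epsilon_c+\epsilon_{c+1}}x_c^{-1}\Phi(\mathfrak u)$. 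This number equals $\ell(\overline{x_cs_{\epsilon_c+\epsilon_{c+1}}})$, because $\W_{\m}$ permutes $\Phi(\mathfrak u)$. It gives $\ell(t_{x_c}(\Omega))=2n-1-c$.
\end{itemize}
This places $\operatorname{IC}_{S_c}$ in weight $d+c+1=2n-1+c$, after which your twist and HRH computations go through.

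Alternatively, follow the paper. Write $H^c(GC^\bullet_{x_c})=\operatorname{im}(N_{x_{c-1}}\to N_{x_c})$. Then combine Brenti's Loewy layers with the maximal-rank statement of Theorem~\ref{thm:adjacentmaps}: the image consists exactly of the common factors $L_{x_c}$ and $L_{x_{d+1-c}}$, in weights $d+c$ and $d+c+1$.
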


\begin{proof}
First suppose $d=2n-2$. For $0\leq i\leq d$, $i\ne n-1$,
write $x_i$ for the unique element of $\W^{\m}$ of length
$i$, with $x_0=e$, and
$x_{n-1}^{+},x_{n-1}^{-}$ for the two elements of length
$n-1$. The Bruhat order has the following Hasse diagram:
\begin{center}
\begin{tikzpicture}[x=1.45cm,y=.65cm,
  every node/.style={inner sep=2pt}]
\node (e) at (0,0) {$e$};
\node (xone) at (1,0) {$x_1$};
\node (ldots) at (2,0) {$\cdots$};
\node (left) at (3,0) {$x_{n-2}$};
\node (upper) at (4,.8) {$x_{n-1}^{+}$};
\node (lower) at (4,-.8) {$x_{n-1}^{-}$};
\node (right) at (5,0) {$x_n$};
\node (rdots) at (6,0) {$\cdots$};
\node (last) at (7,0) {$x_d$};
\draw[->] (e)--(xone);
\draw[->] (xone)--(ldots);
\draw[->] (ldots)--(left);
\draw[->] (left)--(upper);
\draw[->] (left)--(lower);
\draw[->] (upper)--(right);
\draw[->] (lower)--(right);
\draw[->] (right)--(rdots);
\draw[->] (rdots)--(last);
\end{tikzpicture}
\end{center}
Thus $GC_e^{\bullet}$ is
\[
0\longrightarrow N_e\longrightarrow\cdots
\longrightarrow N_{x_{n-2}}
\longrightarrow
N_{x_{n-1}^{+}}\oplus N_{x_{n-1}^{-}}
\longrightarrow N_{x_n}\longrightarrow\cdots
\longrightarrow N_{x_d}\longrightarrow0,
\]
with $N_x$ in degree $\ell(x)$.

By \cite{brenti}*{Theorems 4.2 and 4.3} and
Proposition~\ref{prop:am-weight-normalization}, for
$0\leq i\leq n-2$, the module $N_{x_i}$ has three Loewy
layers. Its socle is $L_{x_i}$, its head is $L_{x_{d-i}}$,
and its middle layer is
\[
\begin{cases}
L_{x_1},&i=0,\\
L_{x_{i+1}}\oplus L_{x_{d+1-i}},&1\leq i\leq n-3,\\
L_{x_{n-1}^{+}}\oplus L_{x_{n-1}^{-}}\oplus L_{x_{n+1}},
   &i=n-2.
\end{cases}
\]
The layers have weights $d+i$, $d+i+1$, and $d+i+2$,
respectively, from socle to head.

For $1\leq c\leq n-2$, the complex $GC_{x_c}^{\bullet}$
is obtained by deleting the terms of $GC_e^{\bullet}$
in degrees less than $c$. Since
$H^{\bullet}(GC_e^{\bullet})=L_e[0]$, we have
\[
H^q(GC_{x_c}^{\bullet})=0\quad(q\ne c),\qquad
H^c(GC_{x_c}^{\bullet})
=\operatorname{im}\!\left(
N_{x_{c-1}}\longrightarrow N_{x_c}
\right).
\]
The only common composition factors of $N_{x_{c-1}}$
and $N_{x_c}$ are $L_{x_c}$ and $L_{x_{d+1-c}}$,
in weights $d+c$ and $d+c+1$, respectively.
Theorem~\ref{thm:adjacentmaps} gives precisely
these two weight pieces of $H^c(GC_{x_c}^{\bullet})$.
Since $Z_{x_{d+1-c}}=S_c$, localization and
\eqref{eq:lc-tate-twist} give
\eqref{eq:quadric-weights}. In particular, none of the
singular Schubert varieties in type $\mathsf D$ is a
rational homology manifold.
The additional constituent $\operatorname{IC}_{S_c}^H(-n)$
starts in Hodge level
\[
\operatorname{codim}_{Q^d}(S_c)-n=n-c-1,
\]
so \eqref{eq:hrh-criterion} gives
$\operatorname{HRH}(Z_{x_c})=n-c-2$.

In type $\mathsf B$, write $x_i$ for the unique element
of length $i$, where $0\leq i\leq d=2n-1$.
The Bruhat order has Hasse diagram
\begin{center}
\begin{tikzpicture}[x=1.5cm,
  every node/.style={inner sep=2pt}]
\node (e) at (0,0) {$e=x_0$};
\node (xone) at (1,0) {$x_1$};
\node (dots) at (2,0) {$\cdots$};
\node (next) at (3,0) {$x_{d-1}$};
\node (last) at (4,0) {$x_d$};
\draw[->] (e)--(xone);
\draw[->] (xone)--(dots);
\draw[->] (dots)--(next);
\draw[->] (next)--(last);
\end{tikzpicture}
\end{center}
By the same formulas, $N_{x_i}$ has two Loewy layers
for $i<d$: socle $L_{x_i}$ in weight $d+i$ and head
$L_{x_{i+1}}$ in weight $d+i+1$; also $N_{x_d}=L_{x_d}$.
The proof is similar: truncation gives
$H^q(GC_{x_c}^{\bullet})=0$ for $q\ne c$ and
$H^c(GC_{x_c}^{\bullet})=L_{x_c}$ in weight $d+c$.
After localization, the latter is
$\operatorname{IC}_{Z_{x_c}}^H(-c)$, proving the
rational homology manifold assertion.
\end{proof}

\begin{remark}
For each singular $Z_{x_c}$, suitable coordinates on the
opposite big cell $U\cong\mathbb A^d$ of $Q^d$ give
\[
Z_{x_c}\cap U\cong
\mathbb A^{c-1}\times V(z_1^2+\cdots+z_{d-2c+2}^2).
\]
It is well-known that the hypersurface $V(z_1^2+\cdots+z_m^2)$ is a rational homology manifold if and only if $m$ is odd (equivalently, $d$ is odd; compare to Proposition \ref{prop:quadric-cohomology}). 
See \cite{MP1}*{Examples 20.4, 20.10} for discussion of the Hodge ideals of these hypersurfaces.
\end{remark}

\raggedbottom
\section{The exceptional pairs}\label{sec:exceptional-pairs}

In this section, we apply Theorem~\ref{thm:local-cohomology} to explicitly calculate the weight filtrations on local cohomology for the two exceptional
Hermitian pairs. As an application, in Section \ref{subsec:exceptional-affine}, we investigate homological invariants of orbit closures in the opposite big cell.

We enumerate the elements of $\W^{\m}$ following \cite[Tables 7.1, 7.2]{cisWeight}  (c.f. \cite{Collingwood1985}*{Tables 5--6 and Figures 2--3}). The tables in loc. cit. describe the weight filtrations on the modules $M_x$ (dually $N_x$). We display the Bruhat graphs in Figures 1 and 2 below, for convenience to the reader. In these graphs, an edge labeled $i$ indicates right multiplication
by $s_i$ from left to right, using Bourbaki's simple roots.

Table 1 below encodes the local cohomology modules corresponding to $(\mathsf{E}_6,\mathsf{D}_5)$, and Table 2 encodes those corresponding to $(\mathsf{E}_7,\mathsf{E}_6)$. Given $x\in \W^{\m}$, row $x$ of the table completely describes the weight filtration on $\mathscr H^{\bullet}_{Z_x}(\mathscr O_X^H)$. For every $x\in\W^{\m}$, the lowest weight piece is
\[
 \operatorname{Gr}^W_{d_X+\ell(x)}
 \mathscr H^{\ell(x)}_{Z_x}(\mathscr O_X^H)
 \cong \operatorname{IC}_{Z_x}^H(-\ell(x)).
\]
We omit this factor from the tables below.
An entry $(q,p;\{y_1,\cdots,y_m\})$ in row $x$ records the summands
\[
\operatorname{IC}_{Z_{y_1}}^H((d_X-\ell(y_1)-p)/2),\cdots,\operatorname{IC}_{Z_{y_m}}^H((d_X-\ell(y_m)-p)/2),
\]
of $\operatorname{Gr}^W_p\mathscr H^q_{Z_x}(\mathscr O_X^H)$. If the list $\{y_1,\cdots,y_m\}$ is a singleton, we omit the braces. 

We calculated Tables 1 and 2 using \cite[Tables 7.1, 7.2]{cisWeight}, Theorem \ref{thm:adjacentmaps}, and the Grothendieck--Cousin complex. The supplementary file \texttt{ExceptionalHermitian.m2} encodes
the aforementioned tables of \cite{cisWeight}, with the weight filtrations on $N_x$
normalized as in Proposition~\ref{prop:am-weight-normalization}.
The resulting local cohomology tables are calculated with and recorded in \texttt{grothendieckCousin.m2}. 

\subsection{The pair $(\mathsf E_6,\mathsf D_5)$}

Here $d_X=16$, and the identity has label $26$.
Every constituent in Table~\ref{tab:exceptional-e6}
has weight $q+17$. Thus
$\mathscr H^q_{Z_x}(\mathscr O_X^H)$ is pure of weight $q+17$
for $q>\ell(x)$, while the module in degree $\ell(x)$
has at most two nonzero weight pieces. We see that $\operatorname{HRH}(Z_x)\in\{0,1,2,+\infty\}$.

For example, $\ell(11)=9$, and
\[
 \operatorname{Gr}^W_p\mathscr H^9_{Z_{11}}(\mathscr O_X^H)
 \cong
 \begin{cases}
  \operatorname{IC}_{Z_{11}}^H(-9),&p=25,\\
  \operatorname{IC}_{Z_4}^H(-11),&p=26,\\
  0,&\text{otherwise}.
 \end{cases}
\]
The other nonzero local cohomology module is
\[
 \mathscr H^{10}_{Z_{11}}(\mathscr O_X^H)
 \cong \operatorname{IC}_{Z_1}^H(-13),
\]
which is pure of weight $27$.
All other local cohomology modules vanish.

\begin{table}[!htbp]
\begin{minipage}{\textwidth}
\centering
\caption{Additional factors for $(\mathsf E_6,\mathsf D_5)$.}
\label{tab:exceptional-e6}
\medskip
{\small
\setlength{\tabcolsep}{3pt}
\begin{minipage}[t]{0.49\textwidth}
\vspace{0pt}\centering
\begin{tabular}{@{}rrcl@{}}
\hline
$x$ & $\ell(x)$ & $\operatorname{HRH}(Z_x)$ & Additional $(q,p;y)$\\
\hline
0 & 16 & $+\infty$ & ---\\
1 & 15 & $+\infty$ & ---\\
2 & 14 & $+\infty$ & ---\\
3 & 13 & $+\infty$ & ---\\
4 & 12 & $+\infty$ & ---\\
5 & 12 & $+\infty$ & ---\\
6 & 11 & 0 & $(11,28;2)$\\
7 & 11 & $+\infty$ & ---\\
8 & 10 & 1 & $(10,27;1)$\\
9 & 10 & 0 & $(11,28;2)$\\
10 & 9 & 2 & $(9,26;0)$\\
11 & 9 & 0 & $(9,26;4),\quad (10,27;1)$\\
12 & 8 & $+\infty$ & ---\\
13 & 8 & 0 & $(9,26;\{0,4\})$\\
\hline
\end{tabular}
\end{minipage}\hfill
\begin{minipage}[t]{0.49\textwidth}
\vspace{0pt}\centering
\begin{tabular}{@{}rrcl@{}}
\hline
$x$ & $\ell(x)$ & $\operatorname{HRH}(Z_x)$ & Additional $(q,p;y)$\\
\hline
14 & 8 & 1 & $(10,27;1)$\\
15 & 7 & 0 & $(9,26;4)$\\
16 & 7 & 0 & $(7,24;9),\quad (9,26;0)$\\
17 & 6 & 0 & $(7,24;9)$\\
18 & 6 & 1 & $(6,23;7),\quad (9,26;0)$\\
19 & 5 & 0 & $(5,22;14),\quad (6,23;7)$\\
20 & 5 & 2 & $(9,26;0)$\\
21 & 4 & 1 & $(6,23;7)$\\
22 & 4 & 0 & $(5,22;14)$\\
23 & 3 & 0 & $(3,20;17)$\\
24 & 2 & 1 & $(2,19;15)$\\
25 & 1 & 2 & $(1,18;12)$\\
26 & 0 & $+\infty$ & ---\\
\hline
\end{tabular}
\end{minipage}
\par}
\end{minipage}
\end{table}

\begin{figure}[!htbp]
\centering
\begin{tikzpicture}[x=.0595\linewidth,y=.47cm,
  vertex/.style={circle,fill=white,inner sep=.5pt,minimum size=3.5mm,
    font=\fontsize{8}{9}\selectfont},
  rootlabel/.style={fill=white,inner sep=.3pt,
    font=\fontsize{6}{7}\selectfont}]
\node[vertex] (v26) at (0,0) {$26$};
\node[vertex] (v25) at (1,0) {$25$};
\node[vertex] (v24) at (2,0) {$24$};
\node[vertex] (v23) at (3,0) {$23$};
\node[vertex] (v22) at (4,1) {$22$};
\node[vertex] (v21) at (4,-1) {$21$};
\node[vertex] (v20) at (5,2) {$20$};
\node[vertex] (v19) at (5,0) {$19$};
\node[vertex] (v18) at (6,1) {$18$};
\node[vertex] (v17) at (6,-1) {$17$};
\node[vertex] (v16) at (7,0) {$16$};
\node[vertex] (v15) at (7,-2) {$15$};
\node[vertex] (v14) at (8,1) {$14$};
\node[vertex] (v13) at (8,-1) {$13$};
\node[vertex] (v12) at (8,-3) {$12$};
\node[vertex] (v11) at (9,0) {$11$};
\node[vertex] (v10) at (9,-2) {$10$};
\node[vertex] (v9) at (10,1) {$9$};
\node[vertex] (v8) at (10,-1) {$8$};
\node[vertex] (v7) at (11,2) {$7$};
\node[vertex] (v6) at (11,0) {$6$};
\node[vertex] (v5) at (12,1) {$5$};
\node[vertex] (v4) at (12,-1) {$4$};
\node[vertex] (v3) at (13,0) {$3$};
\node[vertex] (v2) at (14,0) {$2$};
\node[vertex] (v1) at (15,0) {$1$};
\node[vertex] (v0) at (16,0) {$0$};
\draw[line width=.3pt] (v1) -- node[rootlabel] {$6$} (v0);
\draw[line width=.3pt] (v2) -- node[rootlabel] {$5$} (v1);
\draw[line width=.3pt] (v3) -- node[rootlabel] {$4$} (v2);
\draw[line width=.3pt] (v4) -- node[rootlabel] {$2$} (v3);
\draw[line width=.3pt] (v5) -- node[rootlabel] {$3$} (v3);
\draw[line width=.3pt] (v6) -- node[rootlabel] {$3$} (v4);
\draw[line width=.3pt] (v6) -- node[rootlabel] {$2$} (v5);
\draw[line width=.3pt] (v7) -- node[rootlabel] {$1$} (v5);
\draw[line width=.3pt] (v8) -- node[rootlabel] {$4$} (v6);
\draw[line width=.3pt] (v9) -- node[rootlabel] {$1$} (v6);
\draw[line width=.3pt] (v9) -- node[rootlabel] {$2$} (v7);
\draw[line width=.3pt] (v10) -- node[rootlabel] {$5$} (v8);
\draw[line width=.3pt] (v11) -- node[rootlabel] {$1$} (v8);
\draw[line width=.3pt] (v11) -- node[rootlabel] {$4$} (v9);
\draw[line width=.3pt] (v12) -- node[rootlabel] {$6$} (v10);
\draw[line width=.3pt] (v13) -- node[rootlabel] {$1$} (v10);
\draw[line width=.3pt] (v13) -- node[rootlabel] {$5$} (v11);
\draw[line width=.3pt] (v14) -- node[rootlabel] {$3$} (v11);
\draw[line width=.3pt] (v15) -- node[rootlabel] {$1$} (v12);
\draw[line width=.3pt] (v15) -- node[rootlabel] {$6$} (v13);
\draw[line width=.3pt] (v16) -- node[rootlabel] {$3$} (v13);
\draw[line width=.3pt] (v16) -- node[rootlabel] {$5$} (v14);
\draw[line width=.3pt] (v17) -- node[rootlabel] {$3$} (v15);
\draw[line width=.3pt] (v17) -- node[rootlabel] {$6$} (v16);
\draw[line width=.3pt] (v18) -- node[rootlabel] {$4$} (v16);
\draw[line width=.3pt] (v19) -- node[rootlabel] {$4$} (v17);
\draw[line width=.3pt] (v19) -- node[rootlabel] {$6$} (v18);
\draw[line width=.3pt] (v20) -- node[rootlabel] {$2$} (v18);
\draw[line width=.3pt] (v21) -- node[rootlabel] {$5$} (v19);
\draw[line width=.3pt] (v22) -- node[rootlabel] {$2$} (v19);
\draw[line width=.3pt] (v22) -- node[rootlabel] {$6$} (v20);
\draw[line width=.3pt] (v23) -- node[rootlabel] {$2$} (v21);
\draw[line width=.3pt] (v23) -- node[rootlabel] {$5$} (v22);
\draw[line width=.3pt] (v24) -- node[rootlabel] {$4$} (v23);
\draw[line width=.3pt] (v25) -- node[rootlabel] {$3$} (v24);
\draw[line width=.3pt] (v26) -- node[rootlabel] {$1$} (v25);
\end{tikzpicture}
\caption{Bruhat order for $(\mathsf E_6,\mathsf D_5)$.}
\label{fig:exceptional-e6}
\end{figure}

\subsection{The pair $(\mathsf E_7,\mathsf E_6)$}

Here $d_X=27$, and the identity has label $55$.
The additional factors in Table~\ref{tab:exceptional-e7} below
have weights $q+28$ and $q+29$. For example, $\ell(49)=5$, and
\[
 \operatorname{Gr}^W_p\mathscr H^6_{Z_{49}}(\mathscr O_X^H)
 \cong
 \begin{cases}
  \operatorname{IC}_{Z_{40}}^H(-8),&p=34,\\
  \operatorname{IC}_{Z_5}^H(-15),&p=35,\\
  0,&\text{otherwise}.
 \end{cases}
\]
Each individual local cohomology module is multiplicity-free as a
$\mathscr D_X$-module in both exceptional pairs.
We see that $\operatorname{HRH}(Z_x)\in\{0,1,2,3,+\infty\}$.

\begin{table}[!htbp]
\begin{minipage}{\textwidth}
\centering
\caption{Additional factors for $(\mathsf E_7,\mathsf E_6)$.}
\label{tab:exceptional-e7}
\medskip
{\small
\setlength{\tabcolsep}{3pt}
\begin{minipage}[t]{0.49\textwidth}
\vspace{0pt}\centering
\begin{tabular}{@{}rrcl@{}}
\hline
$x$ & $\ell(x)$ & $\operatorname{HRH}(Z_x)$ & Additional $(q,p;y)$\\
\hline
0 & 27 & $+\infty$ & ---\\
1 & 26 & $+\infty$ & ---\\
2 & 25 & $+\infty$ & ---\\
3 & 24 & $+\infty$ & ---\\
4 & 23 & $+\infty$ & ---\\
5 & 22 & $+\infty$ & ---\\
6 & 22 & $+\infty$ & ---\\
7 & 21 & 0 & $(21,49;3)$\\
8 & 21 & $+\infty$ & ---\\
9 & 20 & 1 & $(20,48;2)$\\
10 & 20 & 0 & $(21,49;3)$\\
11 & 19 & 2 & $(19,47;1)$\\
12 & 19 & 0 & $(19,47;5),\quad (20,48;2)$\\
13 & 18 & 3 & $(18,46;0)$\\
14 & 18 & 0 & $(19,47;\{1,5\})$\\
15 & 18 & 1 & $(20,48;2)$\\
16 & 17 & $+\infty$ & ---\\
17 & 17 & 0 & $(18,46;0),\quad (19,47;5)$\\
18 & 17 & 0 & $(17,45;10),\quad (19,47;1)$\\
19 & 16 & 0 & $(19,47;5)$\\
20 & 16 & 0 & $(17,45;10),\quad (18,46;0)$\\
21 & 16 & 1 & $(16,44;8),\quad (19,47;1)$\\
22 & 15 & 0 & $(17,45;10)$\\
23 & 15 & 0 & $(15,43;15),\quad (16,44;8)$\\
 & &  & $(18,46;0)$\\
24 & 15 & 2 & $(19,47;1)$\\
25 & 14 & 0 & $(15,43;15),\quad (16,44;8)$\\
26 & 14 & 1 & $(16,44;8),\quad (18,46;0)$\\
27 & 14 & 0 & $(15,43;15),\quad (18,46;0)$\\
28 & 13 & 0 & $(13,41;21),\quad (16,44;8)$\\
29 & 13 & 0 & $(13,41;15),\quad (15,43;15)$\\
30 & 13 & 0 & $(13,41;20),\quad (18,46;0)$\\
\hline
\end{tabular}
\end{minipage}\hfill
\begin{minipage}[t]{0.49\textwidth}
\vspace{0pt}\centering
\begin{tabular}{@{}rrcl@{}}
\hline
$x$ & $\ell(x)$ & $\operatorname{HRH}(Z_x)$ & Additional $(q,p;y)$\\
\hline
31 & 12 & 1 & $(12,40;8),\quad (16,44;8)$\\
32 & 12 & 0 & $(12,40;\{18,22,24\})$\\
33 & 12 & 1 & $(12,40;17),\quad (18,46;0)$\\
34 & 11 & 0 & $(11,39;10),\quad (12,40;22)$\\
35 & 11 & 0 & $(11,39;\{14,19\}),\quad (12,40;24)$\\
36 & 11 & 2 & $(11,39;13),\quad (18,46;0)$\\
37 & 10 & 0 & $(10,38;\{12,29\}),\quad (10,39;15)$\\
 & &  & $(11,39;19)$\\
38 & 10 & 0 & $(10,38;\{11,16\}),\quad (12,40;24)$\\
39 & 10 & 3 & $(10,38;0),\quad (18,46;0)$\\
40 & 9 & 1 & $(9,37;5),\quad (11,39;19)$\\
41 & 9 & 0 & $(9,37;9),\quad (10,38;\{16,29\})$\\
 & &  & $(10,39;15)$\\
42 & 9 & 0 & $(9,37;1),\quad (12,40;24)$\\
43 & 8 & 0 & $(8,36;\{7,34\}),\quad (8,37;10)$\\
 & &  & $(10,38;16)$\\
44 & 8 & 0 & $(8,36;2),\quad (10,38;29)$\\
 & &  & $(10,39;15)$\\
45 & 7 & 1 & $(7,35;\{6,31\}),\quad (7,36;8)$\\
 & &  & $(10,38;16)$\\
46 & 7 & 0 & $(7,35;3),\quad (8,36;34)$\\
 & &  & $(8,37;10)$\\
47 & 6 & 2 & $(10,38;16)$\\
48 & 6 & 0 & $(6,34;\{4,40\}),\quad (6,35;5)$\\
 & &  & $(7,35;31),\quad (7,36;8)$\\
49 & 5 & 0 & $(6,34;40),\quad (6,35;5)$\\
50 & 5 & 1 & $(7,35;31),\quad (7,36;8)$\\
51 & 4 & 0 & $(4,32;46),\quad (4,33;3)$\\
52 & 3 & 1 & $(3,31;44),\quad (3,32;2)$\\
53 & 2 & 2 & $(2,30;42),\quad (2,31;1)$\\
54 & 1 & 3 & $(1,29;39),\quad (1,30;0)$\\
55 & 0 & $+\infty$ & ---\\
\hline
\end{tabular}
\end{minipage}
\par}
\end{minipage}
\end{table}

\begin{figure}[!htbp]
\centering
\begin{tikzpicture}[x=.0355\linewidth,y=.43cm,
  vertex/.style={circle,fill=white,inner sep=.5pt,minimum size=3.5mm,
    font=\fontsize{8}{9}\selectfont},
  rootlabel/.style={fill=white,inner sep=.3pt,
    font=\fontsize{6}{7}\selectfont}]
\node[vertex] (v55) at (0,0) {$55$};
\node[vertex] (v54) at (1,0) {$54$};
\node[vertex] (v53) at (2,0) {$53$};
\node[vertex] (v52) at (3,0) {$52$};
\node[vertex] (v51) at (4,0) {$51$};
\node[vertex] (v50) at (5,1) {$50$};
\node[vertex] (v49) at (5,-1) {$49$};
\node[vertex] (v48) at (6,0) {$48$};
\node[vertex] (v47) at (6,-2) {$47$};
\node[vertex] (v46) at (7,1) {$46$};
\node[vertex] (v45) at (7,-1) {$45$};
\node[vertex] (v44) at (8,2) {$44$};
\node[vertex] (v43) at (8,0) {$43$};
\node[vertex] (v42) at (9,3) {$42$};
\node[vertex] (v41) at (9,1) {$41$};
\node[vertex] (v40) at (9,-1) {$40$};
\node[vertex] (v39) at (10,4) {$39$};
\node[vertex] (v38) at (10,2) {$38$};
\node[vertex] (v37) at (10,0) {$37$};
\node[vertex] (v36) at (11,3) {$36$};
\node[vertex] (v35) at (11,1) {$35$};
\node[vertex] (v34) at (11,-1) {$34$};
\node[vertex] (v33) at (12,2) {$33$};
\node[vertex] (v32) at (12,0) {$32$};
\node[vertex] (v31) at (12,-2) {$31$};
\node[vertex] (v30) at (13,1.5) {$30$};
\node[vertex] (v29) at (13,0) {$29$};
\node[vertex] (v28) at (13,-1.5) {$28$};
\node[vertex] (v27) at (14,1.5) {$27$};
\node[vertex] (v26) at (14,0) {$26$};
\node[vertex] (v25) at (14,-1.5) {$25$};
\node[vertex] (v24) at (15,2) {$24$};
\node[vertex] (v23) at (15,0) {$23$};
\node[vertex] (v22) at (15,-2) {$22$};
\node[vertex] (v21) at (16,1) {$21$};
\node[vertex] (v20) at (16,-1) {$20$};
\node[vertex] (v19) at (16,-3) {$19$};
\node[vertex] (v18) at (17,0) {$18$};
\node[vertex] (v17) at (17,-2) {$17$};
\node[vertex] (v16) at (17,-4) {$16$};
\node[vertex] (v15) at (18,1) {$15$};
\node[vertex] (v14) at (18,-1) {$14$};
\node[vertex] (v13) at (18,-3) {$13$};
\node[vertex] (v12) at (19,0) {$12$};
\node[vertex] (v11) at (19,-2) {$11$};
\node[vertex] (v10) at (20,1) {$10$};
\node[vertex] (v9) at (20,-1) {$9$};
\node[vertex] (v8) at (21,2) {$8$};
\node[vertex] (v7) at (21,0) {$7$};
\node[vertex] (v6) at (22,1) {$6$};
\node[vertex] (v5) at (22,-1) {$5$};
\node[vertex] (v4) at (23,0) {$4$};
\node[vertex] (v3) at (24,0) {$3$};
\node[vertex] (v2) at (25,0) {$2$};
\node[vertex] (v1) at (26,0) {$1$};
\node[vertex] (v0) at (27,0) {$0$};
\draw[line width=.3pt] (v1) -- node[rootlabel] {$7$} (v0);
\draw[line width=.3pt] (v2) -- node[rootlabel] {$6$} (v1);
\draw[line width=.3pt] (v3) -- node[rootlabel] {$5$} (v2);
\draw[line width=.3pt] (v4) -- node[rootlabel] {$4$} (v3);
\draw[line width=.3pt] (v5) -- node[rootlabel] {$2$} (v4);
\draw[line width=.3pt] (v6) -- node[rootlabel] {$3$} (v4);
\draw[line width=.3pt] (v7) -- node[rootlabel] {$3$} (v5);
\draw[line width=.3pt] (v7) -- node[rootlabel] {$2$} (v6);
\draw[line width=.3pt] (v8) -- node[rootlabel] {$1$} (v6);
\draw[line width=.3pt] (v9) -- node[rootlabel] {$4$} (v7);
\draw[line width=.3pt] (v10) -- node[rootlabel] {$1$} (v7);
\draw[line width=.3pt] (v10) -- node[rootlabel] {$2$} (v8);
\draw[line width=.3pt] (v11) -- node[rootlabel] {$5$} (v9);
\draw[line width=.3pt] (v12) -- node[rootlabel] {$1$} (v9);
\draw[line width=.3pt] (v12) -- node[rootlabel] {$4$} (v10);
\draw[line width=.3pt] (v13) -- node[rootlabel] {$6$} (v11);
\draw[line width=.3pt] (v14) -- node[rootlabel] {$1$} (v11);
\draw[line width=.3pt] (v14) -- node[rootlabel] {$5$} (v12);
\draw[line width=.3pt] (v15) -- node[rootlabel] {$3$} (v12);
\draw[line width=.3pt] (v16) -- node[rootlabel] {$7$} (v13);
\draw[line width=.3pt] (v17) -- node[rootlabel] {$1$} (v13);
\draw[line width=.3pt] (v17) -- node[rootlabel] {$6$} (v14);
\draw[line width=.3pt] (v18) -- node[rootlabel] {$3$} (v14);
\draw[line width=.3pt] (v18) -- node[rootlabel] {$5$} (v15);
\draw[line width=.3pt] (v19) -- node[rootlabel] {$1$} (v16);
\draw[line width=.3pt] (v19) -- node[rootlabel] {$7$} (v17);
\draw[line width=.3pt] (v20) -- node[rootlabel] {$3$} (v17);
\draw[line width=.3pt] (v20) -- node[rootlabel] {$6$} (v18);
\draw[line width=.3pt] (v21) -- node[rootlabel] {$4$} (v18);
\draw[line width=.3pt] (v22) -- node[rootlabel] {$3$} (v19);
\draw[line width=.3pt] (v22) -- node[rootlabel] {$7$} (v20);
\draw[line width=.3pt] (v23) -- node[rootlabel] {$4$} (v20);
\draw[line width=.3pt] (v23) -- node[rootlabel] {$6$} (v21);
\draw[line width=.3pt] (v24) -- node[rootlabel] {$2$} (v21);
\draw[line width=.3pt] (v25) -- node[rootlabel] {$4$} (v22);
\draw[line width=.3pt] (v25) -- node[rootlabel] {$7$} (v23);
\draw[line width=.3pt] (v26) -- node[rootlabel] {$5$} (v23);
\draw[line width=.3pt] (v27) -- node[rootlabel] {$2$} (v23);
\draw[line width=.3pt] (v27) -- node[rootlabel] {$6$} (v24);
\draw[line width=.3pt] (v28) -- node[pos=.76,rootlabel] {$5$} (v25);
\draw[line width=.3pt] (v29) -- node[pos=.76,rootlabel] {$2$} (v25);
\draw[line width=.3pt] (v28) -- node[pos=.22,rootlabel] {$7$} (v26);
\draw[line width=.3pt] (v30) -- node[pos=.76,rootlabel] {$2$} (v26);
\draw[line width=.3pt] (v29) -- node[pos=.22,rootlabel] {$7$} (v27);
\draw[line width=.3pt] (v30) -- node[pos=.76,rootlabel] {$5$} (v27);
\draw[line width=.3pt] (v31) -- node[rootlabel] {$6$} (v28);
\draw[line width=.3pt] (v32) -- node[rootlabel] {$2$} (v28);
\draw[line width=.3pt] (v32) -- node[rootlabel] {$5$} (v29);
\draw[line width=.3pt] (v32) -- node[rootlabel] {$7$} (v30);
\draw[line width=.3pt] (v33) -- node[rootlabel] {$4$} (v30);
\draw[line width=.3pt] (v34) -- node[rootlabel] {$2$} (v31);
\draw[line width=.3pt] (v34) -- node[rootlabel] {$6$} (v32);
\draw[line width=.3pt] (v35) -- node[rootlabel] {$4$} (v32);
\draw[line width=.3pt] (v35) -- node[rootlabel] {$7$} (v33);
\draw[line width=.3pt] (v36) -- node[rootlabel] {$3$} (v33);
\draw[line width=.3pt] (v37) -- node[rootlabel] {$4$} (v34);
\draw[line width=.3pt] (v37) -- node[rootlabel] {$6$} (v35);
\draw[line width=.3pt] (v38) -- node[rootlabel] {$3$} (v35);
\draw[line width=.3pt] (v38) -- node[rootlabel] {$7$} (v36);
\draw[line width=.3pt] (v39) -- node[rootlabel] {$1$} (v36);
\draw[line width=.3pt] (v40) -- node[rootlabel] {$5$} (v37);
\draw[line width=.3pt] (v41) -- node[rootlabel] {$3$} (v37);
\draw[line width=.3pt] (v41) -- node[rootlabel] {$6$} (v38);
\draw[line width=.3pt] (v42) -- node[rootlabel] {$1$} (v38);
\draw[line width=.3pt] (v42) -- node[rootlabel] {$7$} (v39);
\draw[line width=.3pt] (v43) -- node[rootlabel] {$3$} (v40);
\draw[line width=.3pt] (v43) -- node[rootlabel] {$5$} (v41);
\draw[line width=.3pt] (v44) -- node[rootlabel] {$1$} (v41);
\draw[line width=.3pt] (v44) -- node[rootlabel] {$6$} (v42);
\draw[line width=.3pt] (v45) -- node[rootlabel] {$4$} (v43);
\draw[line width=.3pt] (v46) -- node[rootlabel] {$1$} (v43);
\draw[line width=.3pt] (v46) -- node[rootlabel] {$5$} (v44);
\draw[line width=.3pt] (v47) -- node[rootlabel] {$2$} (v45);
\draw[line width=.3pt] (v48) -- node[rootlabel] {$1$} (v45);
\draw[line width=.3pt] (v48) -- node[rootlabel] {$4$} (v46);
\draw[line width=.3pt] (v49) -- node[rootlabel] {$1$} (v47);
\draw[line width=.3pt] (v49) -- node[rootlabel] {$2$} (v48);
\draw[line width=.3pt] (v50) -- node[rootlabel] {$3$} (v48);
\draw[line width=.3pt] (v51) -- node[rootlabel] {$3$} (v49);
\draw[line width=.3pt] (v51) -- node[rootlabel] {$2$} (v50);
\draw[line width=.3pt] (v52) -- node[rootlabel] {$4$} (v51);
\draw[line width=.3pt] (v53) -- node[rootlabel] {$5$} (v52);
\draw[line width=.3pt] (v54) -- node[rootlabel] {$6$} (v53);
\draw[line width=.3pt] (v55) -- node[rootlabel] {$7$} (v54);
\end{tikzpicture}
\caption{Bruhat order for $(\mathsf E_7,\mathsf E_6)$.}
\label{fig:exceptional-e7}
\end{figure}

\subsection{Restricting to the opposite big cell}
\label{subsec:exceptional-affine}

We restrict the calculations above to the opposite big cell and compare to some previously studied ideals. Write $U_6$ and $U_7$ for the opposite big cells corresponding
to $(\mathsf E_6,\mathsf D_5)$ and $(\mathsf E_7,\mathsf E_6)$,
respectively. We endow $U_i$ with the coordinates of Filippini--Torres--Weyman \cite{FTW}, though our indexing of $\W^{\m}$ is different from loc. cit. For the translation, compare Figure 1 to \cite[Figure 1]{FTW} and Figure 2 to \cite[Figure 2]{FTW}.

\smallskip
\noindent\textit{The Freudenthal cubic.}
Let $f$ be the Freudenthal cubic on $U_7\cong \mathbb A^{27}$, which satisfies $V(f)=Z_{54}\cap U_7$. The ideal generated by $f$ is denoted by $J_{55}$ in \cite{FTW}. Row $54$ of Table~\ref{tab:exceptional-e7} recovers
the weight filtration on
$\mathcal O_{\mathbb A^{27}}[f^{-1}]$
computed by L\H{o}rincz--Yang
\cite{LYsemi}*{Section~5.4}, who also determine its Hodge
filtration. Row $39$ of Table 2 gives the weight filtration on local cohomology supported in the singular locus.

\smallskip
\noindent\textit{A variety of complexes.}
Let $A$ and $B$ be generic matrices of sizes $2\times3$ and
$3\times2$, respectively, and put
\[
 N=\{(A,B)\mid AB=0,\ \operatorname{rank}B\leq1\},\qquad
 N'=\{(A,B)\mid B=0,\ \operatorname{rank}A\leq1\}
 \subseteq\mathbb A^{12}.
\]
The variety $N$ is an irreducible component of the nullcone
$\{(A,B)\mid AB=0\}$ for the action
$g\cdot(A,B)=(Ag^{-1},gB)$ of $\mathrm{GL}_3(\mathbb{C})$. The coordinates in
\cite{FTW}*{Sections~3.10 and~3.20} identify
\[
 Z_{17}\cap U_6=N\times\mathbb A^2\times\{0\},\qquad
 Z_9\cap U_6=N'\times\mathbb A^2\times\{0\}.
\]
It follows from Table~\ref{tab:exceptional-e6} that $\mathscr H_N^q(\mathcal O_{\mathbb A^{12}}^H)$ is described as
\begin{equation}
 \mathscr H_N^q(\mathcal O_{\mathbb A^{12}}^H)
 \cong
 \begin{cases}
  \operatorname{IC}_N^H(-4),&q=4,\\
  \operatorname{IC}_{N'}^H(-7),&q=5,\\
  0,&\text{otherwise}.
 \end{cases}
\end{equation} 
For arithmetic rank and local cohomological dimension of
determinantal nullcones, see Jeffries--Pandey--Singh--Walther
\cite{JPSW}.

\smallskip
\noindent\textit{A Huneke--Ulrich ideal.}
Let $M$ be a generic $6\times6$ skew-symmetric matrix and $v$
a generic row vector of length six. Set
\[
 Y=\{(M,v)\mid vM=0,\ \operatorname{Pf}(M)=0\},\qquad
 S=\{(M,v)\mid v=0,\ \operatorname{rank}M\leq2\}
 \subseteq\mathbb A^{21}.
\]
The defining ideal $I$ of $Y$ is a seven-generated
Huneke--Ulrich ideal of codimension five
\cite{FTW}*{Section~4.6}.
The patches $Z_{50}\cap U_7$, $Z_{31}\cap U_7$, and $Z_8\cap U_7$
identify with $Y\times\mathbb A^6$, $S\times\mathbb A^6$,
and $\{0\}\times\mathbb A^6$, respectively.
Removing this common affine factor lowers the weights by six,
so Table~\ref{tab:exceptional-e7} gives
\[
 \mathscr H_Y^5(\mathcal O_{\mathbb A^{21}}^H)
 \cong\operatorname{IC}_Y^H(-5),
 \qquad
 \operatorname{Gr}_p^W
 \mathscr H_Y^7(\mathcal O_{\mathbb A^{21}}^H)
 \cong
 \begin{cases}
  \operatorname{IC}_S^H(-10),&p=29,\\
  \operatorname{IC}_{\{0\}}^H(-15),&p=30,\\
  0,&\text{otherwise},
 \end{cases}
\]
and $\mathscr H_Y^q(\mathcal O_{\mathbb A^{21}}^H)=0$
for $q\notin\{5,7\}$.
In particular, the Huneke--Ulrich ideal $I$ has arithmetic
rank seven.

\section*{Acknowledgments}
Experiments with Macaulay2 \cite{M2} and ChatGPT \cite{chat} have provided valuable insights.
 We are grateful to Sam Evens, Andr\'{a}s L\H{o}rincz, Claudiu Raicu, and Vic Reiner for advice and useful conversations regarding this project. We thank Brian Boe for inspirational correspondence. The author acknowledges the support of the National Science Foundation under Grant No. DMS-2601624.

\end{document}